\def\l@section{\@tocline{1}{10pt}{0em}{2.5em}{\bfseries}}
\def\l@subsection{\@tocline{2}{0pt}{2.5em}{3.5em}{}}
\def\l@subsubsection{\@tocline{3}{0pt}{5em}{5.5em}{}}
\newtheorem{theorem}{Theorem}[section]
\newtheorem{corollary}[theorem]{Corollary}
\newtheorem{proposition}[theorem]{Proposition}
\newtheorem{notation}[theorem]{Notation}
\newtheorem{lemma}[theorem]{Lemma}
\theoremstyle{definition}
\newtheorem{definition}[theorem]{Definition}
\newtheorem{example}[theorem]{Example}
\theoremstyle{remark}
\newtheorem{remark}[theorem]{Remark}
\DeclareMathOperator{\ann}{ann}
\DeclareMathOperator{\spa}{span}
\DeclareMathOperator{\rank}{rank}
\DeclareMathOperator{\supp}{supp}
\DeclareMathOperator{\codim}{codim}
\DeclareMathOperator{\asoc}{Asoc}
\DeclareMathOperator{\nil}{Nil}
\DeclareMathOperator{\bnil}{BNil}
\DeclareMathOperator{\snil}{SNil}
\numberwithin{equation}{section}
\title[A Frattini theory for evolution algebras]{A Frattini theory for evolution algebras}
\author[M. Ladra]{Manuel Ladra\textsuperscript{1}\,\orcidlink{0000-0002-0543-4508}}
\address{\textsuperscript{1}Department of Mathematics \& CITMAga, Universidade de Santiago de Compostela, 15782 Santiago de Compostela, Spain}
\email{manuel.ladra@usc.es}
\author[A. Pérez-Rodríguez]{Andrés Pérez-Rodríguez\textsuperscript{2}\,\orcidlink{0009-0007-1095-5328}}
\address{\textsuperscript{2}Department of Mathematics \& CITMAga, Universidade de Santiago de Compostela, 15782 Santiago de Compostela, Spain}
\email{andresperez.rodriguez@usc.es}
\subjclass{17D92, 17A60, 17B30, 06B05}
\keywords{Evolution algebras, Frattini subalgebra, Frattini ideal, nilradical, dually atomistic}
\begin{document}

\begin{abstract}
	This paper develops a Frattini theory for evolution algebras defining the Frattini subalgebra as the intersection of all maximal subalgebras, and the Frattini ideal as the largest ideal contained in it. To this end, we revisit the notion of nilradical, whose classical definition is not directly applicable in this setting, and propose the supersolvable nilradical as a suitable alternative. This leads to necessary and sufficient conditions for the triviality of the Frattini subalgebra and ideal. Finally, we also briefly examine the relevance of the Frattini ideal in the study of dually atomistic evolution algebras.
\end{abstract}

\maketitle

\tableofcontents

\section{Introduction}

\textit{Frattini theory} originates in group theory, where the \textit{Frattini subgroup}, defined as the intersection of all maximal subgroups of a given group, was first introduced by Giovanni Frattini in 1885 (see \cite{F_1885_origin}). The Frattini subgroup captures the notion of the group's ``non-generators'', as it consists precisely of those elements that can be removed from any generating set without losing the generation of the group. In addition, this subgroup has several other remarkable properties; particularly it is always a characteristic subgroup and, consequently, it is normal. Furthermore, the Frattini subgroup of a finite group is nilpotent. Last but not least, it also serves as a criterion for nilpotency, as it contains the derived subgroup if and only if the group itself is nilpotent (see, for instance, \cite[pp.~156--158]{H_59_grouptheory}).

Over time, Frattini theory has been extended beyond group theory to establish a parallel framework for algebras. The similarities between Lie algebras and groups led to the analogous definition of the Frattini subalgebra as the intersection of all maximal subalgebras. This extension motivated numerous investigations, notably by Barnes (see~\cite{B_67_cohom}), Barnes and Gastineau-Hills (see~\cite{BG_68_Lie}), and Marshall (see~\cite{M_67_frat}), among many others. Although less explored, related studies have also emerged in the broader context of non-associative algebras (see~\cite{frat_towers}), and more specifically, in the areas of Leibniz algebras (see~\cite{frat_leib}) and restricted Lie algebras (see~\cite{LT_85_rest}). However, to the best of our knowledge, no similar study has been conducted in the setting of genetic algebras and, in particular, evolution algebras.

Evolution algebras are commutative but non-associative structures which are not defined by identities. They were introduced by J.~P.~Tian and P.~Vojt\v{e}chovsk\'{y} in $2006$ (see \cite{TV_06}) to model non-Mendelian inheritance, which is actually considered the basic language of molecular biology.
Two years later, J.~P.~Tian expanded on this work in a monograph (see \cite{Tian_08}), providing a comprehensive study of their algebraic properties and biological applications. 
This work laid the foundation for numerous subsequent studies on the structural properties of evolution algebras. In particular, researchers have investigated their ideals (see \cite{BCS_22_natural, CKS_19_basic}), conditions to be simple and semisimple (see \cite{CMMT_25}), and even their connections with graph theory (see \cite{EL_15}), as well as several other structural aspects.

Motivated by these developments and many subsequent ones, this work aims to establish a Frattini theory for evolution algebras, also defining their Frattini subalgebras as the intersection of all their maximal subalgebras, and its Frattini ideal as the largest ideal contained within the Frattini subalgebra. To approach this, we begin by revisiting the concept of the \textit{nilradical}, which, in general, is not well-defined in this context. Consequently, we introduce the notion of supersolvable nilradical, which will allow us to establish necessary and sufficient conditions for the Frattini subalgebra and ideal to be trivial. Finally, we briefly explore the role of the Frattini subalgebra in the study of dually atomistic evolution algebras.

The paper is organised into five sections. Following this introduction, Section~\ref{sec:2} presents the preliminary material. We first recall fundamental concepts of evolution algebras, focusing particularly on solvable evolution algebras with one-dimensional derived subalgebras. The section concludes with a review of the essential background on Frattini theory that is necessary for the following developments.

We begin Section~\ref{sec:3} illustrating that, in the context of evolution algebras, multiple maximal nilpotent ideals may exist (see Example~\ref{ex:1}). Consequently, the nilradical of an evolution algebra cannot be defined as its largest nilpotent ideal, unlike in other non-associative structures such as Lie or Leibniz algebras. The main goal of this section is to provide a suitable definition of the nilradical in this setting. To this end, we first focus on solvable evolution algebras with one-dimensional derived subalgebras, where the maximal nilpotent ideal is unique (see Theorem~\ref{th:nilrad_dim1}). Using this result, we inductively define a series of nilpotent ideals leading to the notion of the supersolvable nilradical (Definition~\ref{def:super_nilp}), which captures the essence of the nilradical while enjoying several desirable properties (see Proposition~\ref{prop:sumsnil} and Corollary~\ref{cor:snil_max}).

Subsequently, Section \ref{sec:4} is dedicated to the study of Frattini theory. We characterise both the Frattini subalgebra and the Frattini ideal in the context of solvable evolution algebras with one-dimensional derived subalgebras (see Theorem~\ref{th:nilrad_abelian}). We then establish a necessary condition for an evolution algebra to be $\phi$-free in terms of its supersolvable nilradical (see Theorem~\ref{th:phi-free1}). Finally, we also prove the sufficiency of these conditions in a particular case related to the support of the supersolvable nilradical (see Theorem~\ref{th:equiv_frat}). In the end,
Section~\ref{sec:5} concludes this work, where we characterise the property of being dually atomistic within specific families of evolution algebras (Theorem \ref{th:aa_dually}) by introducing the concept of almost (basic) abelian evolution algebras.

\section{Preliminaries}\label{sec:2}
We begin by establishing the basic notation used throughout this paper. $\mathbb{K}$ will denote an arbitrary field of characteristic different from two, and $\mathbb{K}^*$ will stand for $\mathbb{K}\backslash\{0\}$. Given a $\mathbb{K}$-algebra $\mathcal{A}$ and a subset $S\subseteq\mathcal{A}$, we denote by $\spa\{S\}$ the $\mathbb{K}$-linear span of $S$. Additionally, we use $+$ and $\oplus$ to denote sums and direct sums of vector spaces, respectively.

\subsection{Preliminaries on evolution algebras}
An \textit{evolution algebra} over $\mathbb{K}$ is a $\mathbb{K}$-algebra $\mathcal{E}$ that admits a basis $B=\{e_1,\dots,e_n,\dots\}$, called \textit{natural basis}, such that $e_ie_j=0$ for all $i\neq j$. In this note, we focus on finite-dimensional evolution algebras, meaning that $B$ is a finite set. For a given natural basis $B=\{e_1,\dots,e_n\}$ in $\mathcal{E}$, the scalars $a_{ij}\in\mathbb{K}$ satisfying $e_i^2=\sum_{j=1}^na_{ij}e_j$ are called the \textit{structure constants} of $\mathcal{E}$ relative to $B$. The matrix $M_B(\mathcal{E})=(a_{ij})_{i,j=1}^n$ is said to be the \textit{structure matrix} of $\mathcal{E}$ relative to $B$. Moreover, recall that the \textit{annihilator} of an evolution algebra $\mathcal{E}$ is characterised by \cite[Proposition~1.5.3]{tesisyolanda},
\[\ann(\mathcal{E})\coloneqq\{u\in\mathcal{E}\colon u\mathcal{E}=0\}=\spa\{e_i\in B\colon e_i^2=0\}.\]

Given an evolution algebra $\mathcal{E}$ with a natural basis $B=\{e_1,\dots,e_n\}$ and an element $u=\sum_{i=1}^n\mu_ie_i\in\mathcal{E}$, we define its \textit{support} relative to $B$ as $\supp_B(u)\coloneqq\{i:\mu_i\neq0\}$. In a similar way, we define the \textit{support of a subspace} $U\subset\mathcal{E}$ as $\supp_B(U)\coloneqq\cup_{u\in U}\supp(u)$. Clearly, if $B_U$ is a basis of $U$, then $\supp_B(U)=\cup_{u\in B_U}\supp(u)$. When the choice of the natural basis is clear, we simply write $\supp$ instead of $\supp_B$.

\begin{remark}\label{rem:id}
	Let $\mathcal{E}$ be an evolution algebra with a natural basis $B=\{e_1,\dots,e_n\}$. If $I$ is an ideal of $\mathcal{E}$, then $\spa\{e_i\colon i\in\supp(I)\}$ is also an ideal. 
\end{remark}

An ideal $I$ of $\mathcal{E}$ is called a \textit{basic ideal} of $\mathcal{E}$ relative to $B$ if it admits a natural basis consisting of vectors from $B$. Moreover, adopting the standard terminology from algebras, an ideal $I$ of $\mathcal{E}$ is said to be an \textit{abelian ideal} if $I^2=0$; and an evolution algebra $\mathcal{E}$ is said to be \textit{abelian} if $\mathcal{E}^2=0$. For instance, $\ann(\mathcal{E})$ is clearly an abelian ideal. Moreover, an evolution algebra is said to be \textit{almost abelian} if it is nonabelian, but it has an abelian ideal of codimension one.

An evolution algebra $\mathcal{E}$ is said to be \textit{simple} if it is nonabelian and it does not have proper ideals; and it is called \textit{semisimple} if it is a direct sum of simple evolution algebras. Following~\cite{CSV_16_semiprime}, an evolution algebra $\mathcal{E}$ is  called \textit{semiprime} if it has no nonzero abelian ideals.
We also say that an evolution algebra $\mathcal{E}$ is \textit{supersolvable} if there exists a complete flag made up of ideals, that is, there exists a chain of ideals \[0=I_0\subsetneq I_1\subsetneq\dots\subsetneq I_n=\mathcal{E}\] such that $\dim{I_i}=i$ for every $0\leq i\leq n$.

\begin{remark}
If an evolution algebra $\mathcal{E}$ is supersolvable, then every maximal subalgebra of $\mathcal{E}$ has codimension one. This result for Lie algebras can be found in \cite[Theorem~7]{B_67_cohom}, but the proof works in general. Let $M$ be a maximal subalgebra of $\mathcal{E}$ and $I$ a one-dimensional ideal of $\mathcal{E}$. We proceed by induction on $\dim{\mathcal{E}}$. If $I \subseteq M$, then $M/I$ is a maximal subalgebra of $\mathcal{E}/I$ with codimension one; hence $\codim{M}=1$. If instead, $I\nsubseteq M$, then $\mathcal{E}=M+I$, $M\cap I=0$, which implies again that $\codim{M}=\dim{I} = 1$.
\end{remark}

Next, we also introduce the concept of an $\mathcal{E}$-supersolvable ideal, which is actually inspired by the notion of a $G$-supersolvable normal subgroup.

\begin{definition}
	Let $\mathcal{E}$ be an evolution algebra, and let $I$ be an ideal of $\mathcal{E}$. We say that $I$ is an \textit{$\mathcal{E}$-supersolvable ideal} if it admits a complete flag made up of ideals of $\mathcal{E}$. That is, there exists a chain \[0=I_0\subsetneq I_1\subsetneq\dots\subsetneq I_r=I\] such that each $I_i$ is an ideal of $\mathcal{E}$ and $\dim{I_i}=i$ for every $0\leq i\leq r$. 
\end{definition}
\begin{remark}
 The notion of $\mathcal{E}$-supersolvability enjoys several useful properties.
 Let $\mathcal{E}$ be an evolution algebra and let $I,J$ be ideals of $\mathcal{E}$ such that $J\subset I$. Then, the following assertions follow easily from the definition of $\mathcal{E}$-supersolvability:
\begin{itemize}
	\item If $\mathcal{E}$ is supersolvable, then every ideal of $\mathcal{E}$ is clearly $\mathcal{E}$-supersolvable.
	\item If $I$ is $\mathcal{E}$-supersolvable, then  $J$ is $\mathcal{E}$-supersolvable, and $I/J$ is $\mathcal{E}/J$-supersolvable.
	\item If $J$ is $\mathcal{E}$-supersolvable and $I/J$ is $\mathcal{E}/J$-supersolvable, then $I$ is also $\mathcal{E}$-supersolvable.
	\item The sum of $\mathcal{E}$-supersolvable ideals is again $\mathcal{E}$-supersolvable.
\end{itemize}
\end{remark}

In this paper we will work extensively with nilpotent and solvable evolution algebras. Given a (not necessarily evolution) algebra $\mathcal{E}$, we define the following sequences of subalgebras:
\begin{align*}
		&\mathcal{E}^{\langle1\rangle}=\mathcal{E},\qquad \quad  \mathcal{E}^{\langle k+1\rangle}=\mathcal{E}^{\langle k\rangle}\mathcal{E};	\\	
		&\mathcal{E}^1=\mathcal{E},\qquad  \quad  \ \ \mathcal{E}^{k+1}=\sum_{i=1}^k\mathcal{E}^i\mathcal{E}^{k+1-i};	\\
		&\mathcal{E}^{(1)}=\mathcal{E},\qquad \quad  \mathcal{E}^{(k+1)}=\mathcal{\mathcal{E}}^{(k)}\mathcal{E}^{(k)}.
	\end{align*}
An (evolution) algebra $\mathcal{E}$ is called \textit{right nilpotent} if there exists $n\in\mathbb{N}$ such that
 $\mathcal{E}^{\langle n\rangle}=0$, \textit{nilpotent} if there exists $n\in\mathbb{N}$ such that $\mathcal{E}^{n}=0$,
  and \textit{solvable} if there exists $n\in\mathbb{N}$ such that $\mathcal{E}^{(n)}=0$. The second term in each of the
  previous sequences, $\mathcal{E}^{\langle2\rangle}=\mathcal{E}^{2}=\mathcal{E}^{(2)}$, is called the \textit{derived
  	 subalgebra of $\mathcal{E}$}. Recall that a commutative algebra is right nilpotent if and only if it is nilpotent (see \cite[Chapter 4, Proposition 1]{ZSSS_82}). Moreover, the structure matrix of a nilpotent evolution algebra can be assumed to be strictly (upper or lower) triangular by \cite[Theorem~2.7]{CLOR_14}. 
We now introduce the following notation for a family  of solvable but non-nilpotent evolution algebras, which will play a key role throughout our study. 

\begin{notation}
We will denote by $\mathcal{T}_{\mathbb{K}}$ the set of all solvable but non-nilpotent evolution algebras with one-dimensional derived subalgebra over a field $\mathbb{K}$ with $\operatorname{char}\mathbb{K}\neq2$. Accordingly, we will say that an evolution algebra $\mathcal{E}$ of this type is an element of $\mathcal{T}_{\mathbb{K}}$, $\mathcal{E}\in\mathcal{T}_\mathbb{K}$. For simplicity in exposition, we slightly abuse notation by also considering a square matrix $A$ to be in $\mathcal{T}_{\mathbb{K}}$, $A\in\mathcal{T}_\mathbb{K}$, if it can serve as the structure matrix of an evolution algebra in $\mathcal{T}_\mathbb{K}$.
\end{notation}

This family of evolution algebras has already been introduced and characterised in \cite{CGOT_13}. Given a vector space $V$ over $\mathbb{K}$ with basis $B=\{e_1,\dots,e_n\}$ and scalars $\lambda_1,\dots,\lambda_n\in\mathbb{K}$, not all of them zero, and $\sum_{j=1}^{k}\lambda_j=0$ for a $k\in\{1,\dots,n\}$, we define $\mathcal{E}_k(\lambda_1,\dots,\lambda_n)$ as the evolution algebra with natural basis $B$ and product given by \[e_i^2=\lambda_i(e_1+\dots+e_k), \text{ for all } i=1,\dots,n.\] In fact, \cite[Proposition~2.5 \& Remark~2.6]{CGOT_13} state precisely that every complex evolution algebra in $\mathcal{T}_\mathbb{K}$ is isomorphic to one of the algebras described above with $k\geq2$ and scalars $\lambda_1,\dots,\lambda_k\in\mathbb{K}$, not all zero. The proof extends readily to any field. Nevertheless, we now present a slightly stronger characterisation.

\begin{proposition}\label{prop:T_parameters}
Every evolution algebra of $\mathcal{T}_{\mathbb{K}}$ is isomorphic to an evolution algebra $\mathcal{E}_k(\lambda_1,\dots,\lambda_n)$ with $k\geq2$ and $\lambda_1,\dots,\lambda_k\neq0$.
\end{proposition}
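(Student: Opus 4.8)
The plan is to bootstrap from the characterisation of \cite[Proposition~2.5 \& Remark~2.6]{CGOT_13} recalled just above: any $\mathcal{E}\in\mathcal{T}_{\mathbb{K}}$ is isomorphic to some $\mathcal{E}_k(\lambda_1,\dots,\lambda_n)$ with $k\geq2$, $\sum_{j=1}^{k}\lambda_j=0$, and $\lambda_1,\dots,\lambda_k$ not all zero. The only thing left to achieve is to force $\lambda_1,\dots,\lambda_k$ to be \emph{all} nonzero. Since $\sum_{j=1}^k\lambda_j=0$ while the tuple $(\lambda_1,\dots,\lambda_k)$ is nonzero, at least two of these scalars are nonzero; let $k'$ denote the number of nonzero entries among $\lambda_1,\dots,\lambda_k$, so $2\le k'\le k$. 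If $k'=k$ there is nothing to do, so assume $k'<k$.

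First I would permute the basis vectors $e_1,\dots,e_k$. A permutation of $\{1,\dots,n\}$ that restricts to $\{1,\dots,k\}$ sends a natural basis to a natural basis and fixes the vector $w\coloneqq e_1+\dots+e_k$, so the resulting algebra is again of the form $\mathcal{E}_k$ with the $\lambda_i$'s relabelled; hence we may assume $\lambda_1,\dots,\lambda_{k'}\neq0$ and $\lambda_{k'+1}=\dots=\lambda_k=0$, and then $e_{k'+1},\dots,e_k\in\ann(\mathcal{E})$. Next I would carry out the change of basis that fixes every $e_i$ with $i\neq k'$ and replaces $e_{k'}$ by $f_{k'}\coloneqq e_{k'}+e_{k'+1}+\dots+e_k$, writing $f_i=e_i$ for $i\neq k'$. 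The key computations are: $f_1+\dots+f_{k'}=w$; $f_{k'}^2=e_{k'}^2+e_{k'+1}^2+\dots+e_k^2=\lambda_{k'}w$, since the cross terms vanish and $\lambda_{k'+1}=\dots=\lambda_k=0$; and $f_i^2=e_i^2=\lambda_i w=\lambda_i(f_1+\dots+f_{k'})$ for every $i\neq k'$ (this reads $0=0$ when $k'<i\le k$). Altogether $f_i^2=\lambda_i(f_1+\dots+f_{k'})$ for all $i$, so $\mathcal{E}\cong\mathcal{E}_{k'}(\lambda_1,\dots,\lambda_{k'},0,\dots,0,\lambda_{k+1},\dots,\lambda_n)$ with $\lambda_1,\dots,\lambda_{k'}$ all nonzero, $k'\ge2$, and $\sum_{j=1}^{k'}\lambda_j=\sum_{j=1}^{k}\lambda_j=0$, as required.

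The one point needing genuine care — and the only place where the evolution-algebra structure is really used — is verifying that $\{f_1,\dots,f_n\}$ is again a natural basis, i.e. $f_if_j=0$ for $i\neq j$. This is immediate except for the products $f_if_{k'}$ with $i\neq k'$: for $i\notin\{k',\dots,k\}$ every summand $e_ie_l$ with $k'\le l\le k$ vanishes by orthogonality, while for $i\in\{k'+1,\dots,k\}$ one gets $f_if_{k'}=e_i^2=\lambda_i w=0$ since $\lambda_i=0$. Linear independence of the $f_i$ is clear from $e_{k'}=f_{k'}-f_{k'+1}-\dots-f_k$. I do not expect any further obstacle; the argument amounts to a normalisation of the parametrisation already available in the literature.
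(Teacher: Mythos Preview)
Your proof is correct and follows essentially the same route as the paper's own argument: both absorb the annihilator elements $e_{k'+1},\dots,e_k$ (the paper's $e_{m+1},\dots,e_k$) into one of the nonzero-$\lambda$ basis vectors to collapse $w=e_1+\dots+e_k$ down to a sum of $k'$ new basis vectors. The only cosmetic differences are that the paper absorbs them into $e_1$ and simultaneously rescales by $1/\lambda_1$, whereas you absorb them into $e_{k'}$ with no rescaling; your version also explicitly justifies $k'\ge2$ and checks the natural-basis condition $f_if_j=0$, both of which the paper leaves implicit.
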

\begin{proof}
Consider an evolution algebra $\mathcal{E}\in\mathcal{T}_{\mathbb{K}}$. By \cite[Proposition~2.5 \& Remark~2.6]{CGOT_13}, $\mathcal{E}$ is isomorphic to $\mathcal{E}_k(\lambda_1,\dots,\lambda_n)$ with $k\geq2$ and $\lambda_1,\dots,\lambda_n\in\mathbb{K}$, not all of them zero. Then, just reordering the natural basis, there exists a natural number $2\leq m\leq k$ such that $\lambda_1,\dots,\lambda_m\neq0$ and $\lambda_{m+1},\dots,\lambda_k=0$. If $m=k$, we are done. Otherwise, we can consider the following natural basis transformation:
$f_1=\frac{1}{\lambda_1}(e_1+e_{m+1}+\dots+e_k)$ and $f_i=\frac{1}{\lambda_1}e_i$ for all $i=2,\dots,n$. Then, we get that $f_1^2=\frac{1}{\lambda_1}(e_1+\dots+e_k)=f_1+\dots+f_m$ and $f_i^2=\frac{\lambda_i}{\lambda_1^2}(e_1+\dots+e_k)=\frac{\lambda_i}{\lambda_1}(f_1+\dots+f_m)$ for all $i=2,\dots,n$. Consequently, it holds that $\mathcal{E}$ is isomorphic to 
\[\mathcal{E}_m\left(1,\frac{\lambda_2}{\lambda_1},\dots,\frac{\lambda_m}{\lambda_1},0,\dots,0,\frac{\lambda_{k+1}}{\lambda_1},\dots,\frac{\lambda_n}{\lambda_1}\right),\]
where $1,\frac{\lambda_2}{\lambda_1},\dots,\frac{\lambda_m}{\lambda_1}\neq0$, what yields the claim.
\end{proof}
\begin{remark}\label{rem:T_parameters}
	Because of Proposition~\ref{prop:T_parameters}, we may assume without loss of generality that every evolution algebra in $\mathcal{T}_{\mathbb{K}}$ is of the form $\mathcal{E}_k(\lambda_1,\dots,\lambda_n)$ with $k\geq2$ and $\lambda_1,\dots,\lambda_k\neq0$.
\end{remark}
\begin{remark}\label{rem:split}
	Every evolution algebra in $\mathcal{T}_\mathbb{K}$ splits over its annihilator. Given an evolution algebra $\mathcal{E}_k(\lambda_1,\dots,\lambda_n)$ with $k\geq2$ and $\lambda_1,\dots,\lambda_k\neq0$, the result follows from considering the ideal $\spa\{e_i\in B\colon\lambda_i\neq0\}$, which is complemented by $\ann(\mathcal{E})=\spa\{e_i\in B\colon\lambda_i=0\}$.
\end{remark}

Finally, we use the family $\mathcal{T}_\mathbb{K}$ to characterise all one-dimensional abelian ideals of an evolution algebra.
\begin{proposition}\label{prop:ab_ideals}
	Let $\mathcal{E}$ be an evolution $\mathbb{K}$-algebra. Every one-dimensional abelian ideal is either spanned by an element of the annihilator or is the derived subalgebra of a basic ideal isomorphic to an evolution algebra in $\mathcal{T}_\mathbb{K}$.
\end{proposition}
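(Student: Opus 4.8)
The plan is to fix a natural basis $B=\{e_1,\dots,e_n\}$ of $\mathcal{E}$, write the given one-dimensional abelian ideal as $I=\spa\{u\}$ with $u=\sum_{i\in S}\mu_i e_i$, where $S=\supp_B(u)$ and $\mu_i\neq 0$ for $i\in S$, and exploit the basic identity $ue_j=\mu_j e_j^2$ for every $j$ (valid since $e_ie_j=0$ for $i\neq j$). Because $I$ is an ideal, $ue_j=\mu_je_j^2\in I$ for all $j$, and since $\dim I=1$ this product is always a scalar multiple of $u$. The argument then splits into two cases according to whether or not $u\mathcal{E}=0$.

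If $ue_j=0$ for all $j$, then $u\in\ann(\mathcal{E})$, so $I$ is spanned by an element of the annihilator, which is the first alternative. Otherwise there is an index $j_0$ with $ue_{j_0}\neq 0$; necessarily $j_0\in S$, since $ue_{j_0}=\mu_{j_0}e_{j_0}^2$. In this case I would take $\mathcal{B}:=\spa\{e_i\colon i\in S\}=\spa\{e_i\colon i\in\supp(I)\}$, which is a basic ideal of $\mathcal{E}$ by Remark~\ref{rem:id}. One checks that $\mathcal{B}$ is closed under multiplication, hence is itself an evolution algebra with natural basis $\{e_i\colon i\in S\}$: indeed $u\in\mathcal{B}$, and for each $i\in S$ we have $e_i^2=\mu_i^{-1}(ue_i)\in I\subseteq\mathcal{B}$, so writing $e_i^2=c_iu$ one gets $c_{j_0}\neq 0$. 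Then $\mathcal{B}^2=\spa\{e_i^2\colon i\in S\}=\spa\{c_iu\colon i\in S\}=I$ is one-dimensional; $\mathcal{B}^{(3)}=(\mathcal{B}^2)^2=I^2=0$, so $\mathcal{B}$ is solvable; and $\mathcal{B}$ is not nilpotent because $ue_{j_0}=\mu_{j_0}c_{j_0}u$ is a nonzero multiple of $u$, which forces $\mathcal{B}^{k}=I\neq 0$ for all $k\geq 2$ by an easy induction. Hence $\mathcal{B}\in\mathcal{T}_\mathbb{K}$ and $I=\mathcal{B}^2$ is its derived subalgebra, giving the second alternative; if one prefers the normalised presentation $\mathcal{E}_k(\lambda_1,\dots,\lambda_n)$ of $\mathcal{B}$, Proposition~\ref{prop:T_parameters} applies.

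The bookkeeping is routine, and I expect the only delicate point to be the non-nilpotency of $\mathcal{B}$ in the second case: this is precisely where one must use both that $I$ is genuinely abelian, that is, $u^2=0$ (which yields solvability), and the existence of some $e_{j_0}^2\neq 0$ with $j_0\in\supp(I)$ (which rules out $\mathcal{B}$ being abelian rather than an element of $\mathcal{T}_\mathbb{K}$). Keeping track of which structure constants are forced to be nonzero is the main — and rather minor — obstacle.
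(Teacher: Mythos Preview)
Your argument is correct and is essentially the same as the paper's own proof: both take $J=\spa\{e_i\colon i\in\supp(u)\}$, use $ue_i=\mu_ie_i^2\in I$ to get $e_i^2$ collinear with $u$, and conclude that $J$ is a solvable non-nilpotent basic ideal with $J^2=I$. The only difference is that the paper dispatches the solvability and non-nilpotency of $J$ with a single ``clearly,'' whereas you spell out the computation with the constants $c_i$ and the induction on $\mathcal{B}^k$; your version is the more explicit of the two.
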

\begin{proof}
	Let $I$ be a one-dimensional abelian ideal of $\mathcal{E}$, spanned by an element $u=\sum_{i=1}^n\mu_ie_i$. If $u\notin\ann(\mathcal{E})$, there exists at least one index $k\in\supp(u)$ such that $e_k^2\neq0$. Since $I$ is a one-dimensional abelian ideal, it follows that $e_i^2$ is collinear with $u$ for all $i\in\supp(u)$, and that $u^2=(\sum_{i=1}^n\mu_ie_i)^2=0$.
	Now, define $J=\spa\{e_i\colon i\in\supp(u)\}$. Clearly, $J$ is a solvable but non-nilpotent basic ideal of $\mathcal{E}$ with $J^2=I$, completing the proof.
\end{proof}
\subsection{Preliminaries on Frattini theory} For completeness, we recall some
basic definitions and results relevant to our study, which are already within the framework
of evolution algebras. Given an evolution algebra $\mathcal{E}$, its \textit{Frattini subalgebra}, $F(\mathcal{E})$, is defined as the intersection of all maximal subalgebras of $\mathcal{E}$; and its \textit{Frattini ideal}, $\phi(\mathcal{E})$, as the largest ideal contained in $F(\mathcal{E})$. Moreover, $\mathcal{E}$ is said to be \textit{$\phi$-free} if $\phi(\mathcal{E})=0$. Analogously to group theory, $F(\mathcal{E})$ corresponds to the set of non-generators of $\mathcal{E}$ (see \cite[Theorem~1]{T_71_frat}). Moreover, it is known that the Frattini subalgebra of $\mathcal{E}$ is contained in the derived subalgebra $\mathcal{E}^2$ (see \cite[Lemma~1]{M_67_frat}; the proof works for general non-associative algebras). Actually, if $\mathcal{E}$ is nilpotent, the equality holds (see \cite[Theorem~6]{T_71_frat}).
We conclude this section by stating the following two lemmas which will be essential to our investigation.
\begin{lemma}{\cite[Lemma~4.1]{frat_towers}}\label{lem:frat1}
Let $\mathcal{E}$ be an (evolution) algebra. If $U$ is a subalgebra of $\mathcal{E}$, and $I$ is an ideal of $\mathcal{E}$ contained in $F(U)$, then $I$ is contained in $F(\mathcal{E})$.
\end{lemma}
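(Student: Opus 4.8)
The plan is to argue by contraposition: assuming $I\not\subseteq F(\mathcal{E})$, I will produce a maximal subalgebra of $U$ that fails to contain $F(U)$, contradicting the hypothesis $I\subseteq F(U)$. The one structural observation used throughout is that $I\subseteq F(U)\subseteq U$, so $I$ sits inside $U$; this is exactly what will make the modular law for subspaces applicable below.

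So suppose $I\not\subseteq F(\mathcal{E})$. By definition of the Frattini subalgebra there is a maximal subalgebra $M$ of $\mathcal{E}$ with $I\not\subseteq M$. Since $I$ is an ideal of $\mathcal{E}$, the sum $M+I$ is again a subalgebra of $\mathcal{E}$: it is a subspace, and $(M+I)(M+I)\subseteq MM+MI+IM+II\subseteq M+I$ because $MI,IM,II\subseteq I$. As $M+I$ strictly contains $M$, maximality of $M$ forces $M+I=\mathcal{E}$. Intersecting with $U$ and invoking the modular law together with $I\subseteq U$ gives
\[
U \;=\; U\cap\mathcal{E} \;=\; U\cap(M+I) \;=\; (U\cap M)+I .
\]

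Now $U\not\subseteq M$, for otherwise $I\subseteq U\subseteq M$ would contradict $I\not\subseteq M$; hence $U\cap M$ is a proper subalgebra of $U$. Since $U$ is finite-dimensional, every proper subalgebra is contained in a maximal one, so choose a maximal subalgebra $M'$ of $U$ with $U\cap M\subseteq M'$. Then $I\subseteq F(U)\subseteq M'$, and combining this with the displayed identity yields $U=(U\cap M)+I\subseteq M'$, contradicting $M'\subsetneq U$. Therefore $I\subseteq F(\mathcal{E})$, as claimed.

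This is the classical transfer-of-Frattini argument, so no genuine obstacle is expected. The only points requiring care are verifying that $M+I$ is a subalgebra (this uses that $I$ is an \emph{ideal} of $\mathcal{E}$, not merely a subalgebra) and the correct bookkeeping in the modular law, which is legitimate precisely because $I\subseteq U$; the degenerate cases $I=0$ or $U=0$ are immediate, and the step from a proper subalgebra to a maximal one uses only finite-dimensionality, which is standing throughout the paper.
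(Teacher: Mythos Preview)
Your argument is correct and is precisely the classical transfer-of-Frattini proof from \cite{frat_towers}; the paper does not supply its own proof but merely cites that reference, so there is nothing further to compare.
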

\begin{lemma}{\cite[Lemma~7.2]{frat_towers}}\label{lem:frat2}
Let $\mathcal{E}$ be an (evolution) algebra. If $I$ is an abelian ideal of $\mathcal{E}$ such that $\phi(\mathcal{E})\cap I=0$, then there exists a subalgebra $U$ of $\mathcal{E}$ such that $\mathcal{E}=U\oplus I$.
\end{lemma}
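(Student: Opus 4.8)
The plan is to construct the complement by a minimality argument in the lattice of subalgebras, with the Frattini ideal entering precisely to kill a potential residual intersection with $I$. First I would use finite-dimensionality to pick a subalgebra $U$ of $\mathcal{E}$ that is minimal with respect to the property $U+I=\mathcal{E}$; such subalgebras exist, since $\mathcal{E}$ itself is one. It then suffices to show that this $U$ also satisfies $U\cap I=0$, which is exactly $\mathcal{E}=U\oplus I$.

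The crucial point -- and the only place where the abelian hypothesis is used -- is that $D:=U\cap I$ is not merely a subalgebra of $\mathcal{E}$ but an \emph{ideal} of $\mathcal{E}$. Indeed, from $\mathcal{E}=U+I$ one computes $D\mathcal{E}=D(U+I)\subseteq DU+DI$, where $DU\subseteq U\cap I=D$ (as $U$ is a subalgebra and $I$ an ideal) and $DI\subseteq I^{2}=0$; hence $D\mathcal{E}\subseteq D$, so $D$ is an ideal of $\mathcal{E}$ contained in $I$.

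I would then argue by contradiction, assuming $D\neq0$. Since $D\subseteq I$, we have $D\cap\phi(\mathcal{E})\subseteq I\cap\phi(\mathcal{E})=0$, so the nonzero ideal $D$ is not contained in $\phi(\mathcal{E})$; because $\phi(\mathcal{E})$ is the largest ideal contained in $F(\mathcal{E})$, this forces $D\nsubseteq F(\mathcal{E})$, and hence there is a maximal subalgebra $M$ of $\mathcal{E}$ with $D\nsubseteq M$. As $D$ is an ideal, $M+D$ is a subalgebra properly containing $M$, so $M+D=\mathcal{E}$ by maximality. Applying the modular law for subspaces (legitimate because $D\subseteq U$) gives $U=U\cap\mathcal{E}=U\cap(M+D)=(U\cap M)+D$, whence $(U\cap M)+I\supseteq(U\cap M)+D+I=U+I=\mathcal{E}$. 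Thus $U\cap M$ is a subalgebra with $(U\cap M)+I=\mathcal{E}$, and $U\cap M\subsetneq U$ (otherwise $U\subseteq M$, forcing $D=U\cap I\subseteq M$, a contradiction), which contradicts the minimality of $U$. Therefore $D=0$ and $\mathcal{E}=U\oplus I$.

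The main obstacle I anticipate is getting the contradiction set up cleanly: everything hinges on the implication ``$D$ a nonzero ideal with $D\cap\phi(\mathcal{E})=0$'' $\Longrightarrow$ ``$D\nsubseteq F(\mathcal{E})$'', which uses the defining maximality property of $\phi(\mathcal{E})$, together with the observation that the abelian hypothesis is exactly what promotes $U\cap I$ from a subalgebra to an ideal -- without that promotion one could only invoke the Frattini subalgebra, not $\phi(\mathcal{E})$. The remaining manipulations, in particular the modular-law step, are routine.
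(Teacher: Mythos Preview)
Your argument is correct and is precisely the classical proof of this lemma (the paper does not reprove it but cites Towers~\cite[Lemma~7.2]{frat_towers}, whose argument is exactly the minimality-plus-modular-law one you outline). The only comment is that your identification of the key steps---that abelianness of $I$ is what makes $U\cap I$ an ideal, and that the maximality property of $\phi(\mathcal{E})$ converts $D\cap\phi(\mathcal{E})=0$ into $D\nsubseteq F(\mathcal{E})$---is spot on.
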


\section{Defining the nilradical of an evolution algebra}\label{sec:3}

Traditionally, the nilradical of a commutative ring is the ideal consisting of all nilpotent elements. Similarly, the nilradical of a Lie algebra $\mathcal{L}$, $\nil(\mathcal{L})$, is its maximal nilpotent ideal, which exists since the sum of any two nilpotent ideals is also nilpotent. This notion also extends to Leibniz algebras (see \cite[Corollary 4]{bosko2011jacobson}). However, as shown in the next example, more than one maximal nilpotent ideal may exist in the context of evolution algebras.

\begin{example}\label{ex:1}
	Let $\mathcal{E}$ be the evolution algebra with natural basis $\{e_1,e_2,e_3,e_4\}$ and product given by $e_1^2=-e_2^2=e_3+e_4$ and $e_3^2=-e_4^2=e_1+e_2$. The subspaces $\mathcal{N}_1=\spa\{e_1,e_2,e_3+e_4\}$ and $\mathcal{N}_2=\spa\{e_3,e_4,e_1+e_2\}$ are two different maximal nilpotent ideals. However, $\mathcal{N}_1+\mathcal{N}_2=\mathcal{E}$ is not nilpotent.
\end{example}
Even though the previous example suggests otherwise, this does not rule out the existence of a unique maximal nilpotent ideal. In fact, we introduce the following notation.
\begin{remark} 
If an evolution algebra $\mathcal{E}$ has a unique maximal nilpotent ideal, then we call this ideal the nilradical of $\mathcal{E}$, and we denote it by $\nil(\mathcal{E})$.
\end{remark}
For instance, if $\mathcal{E}$ is nilpotent, then $\nil(\mathcal{E})=\mathcal{E}$ trivially. Moreover, the nilradical of an evolution algebra in $\mathcal{T}_\mathbb{K}$, say $\mathcal{E}_k(\lambda_1,\dots,\lambda_n)$ with $k\geq2$ and $\lambda_1,\dots,\lambda_k\neq0$ by Remark~\ref{rem:T_parameters}, exists and can be perfectly characterised in terms of $k$ and the scalars $\lambda_1,\dots,\lambda_k$.

\begin{lemma}\label{lem:lem1}
	Let $\mathcal{E}\in\mathcal{T}_\mathbb{K}$. Then, its derived subalgebra, $\mathcal{E}^2$, is contained in every maximal nilpotent ideal.
\end{lemma}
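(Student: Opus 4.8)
The plan is to use that, for $\mathcal{E}\in\mathcal{T}_\mathbb{K}$, the derived subalgebra $\mathcal{E}^2$ is a \emph{one-dimensional abelian} ideal. One-dimensionality is built into the definition of $\mathcal{T}_\mathbb{K}$; and $(\mathcal{E}^2)^2=0$ follows from solvability (if the one-dimensional algebra $\mathcal{E}^2$ were not abelian, then $(\mathcal{E}^2)^2=\mathcal{E}^2$ and the derived series of $\mathcal{E}$ would never reach $0$), or more concretely from the normal form of Remark~\ref{rem:T_parameters}, where $\mathcal{E}^2=\spa\{e_1+\dots+e_k\}$ and $(e_1+\dots+e_k)^2=\bigl(\sum_{j=1}^{k}\lambda_j\bigr)(e_1+\dots+e_k)=0$. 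With this in hand, the strategy is the standard ``maximal nilpotent ideal absorbs $\mathcal{E}^2$'' argument: given a maximal nilpotent ideal $\mathcal{N}$, I would show that $\mathcal{N}+\mathcal{E}^2$ is again a nilpotent ideal, whence $\mathcal{N}+\mathcal{E}^2=\mathcal{N}$ by maximality, i.e.\ $\mathcal{E}^2\subseteq\mathcal{N}$.

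For the nilpotency of $A\coloneqq\mathcal{N}+\mathcal{E}^2$, note first that since $\mathcal{E}^2$ is one-dimensional, either $\mathcal{E}^2\subseteq\mathcal{N}$ (and there is nothing to prove) or $\mathcal{E}^2\cap\mathcal{N}=0$. Assume the latter. Because $\mathcal{N}$ and $\mathcal{E}^2$ are both ideals, $\mathcal{E}^2\mathcal{N}\subseteq\mathcal{N}$ and $\mathcal{E}^2\mathcal{N}\subseteq\mathcal{E}^2\mathcal{E}\subseteq\mathcal{E}^2$, so $\mathcal{E}^2\mathcal{N}\subseteq\mathcal{E}^2\cap\mathcal{N}=0$; together with $(\mathcal{E}^2)^2=0$ this gives $A^2=\mathcal{N}^2+\mathcal{N}\mathcal{E}^2+(\mathcal{E}^2)^2=\mathcal{N}^2$. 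An induction on $k$ then shows $A^k\subseteq\mathcal{N}^k$ for all $k\geq2$: writing $A^{k+1}=\sum_{i=1}^{k}A^iA^{k+1-i}$, the summands with $2\leq i\leq k-1$ lie in $\mathcal{N}^i\mathcal{N}^{k+1-i}\subseteq\mathcal{N}^{k+1}$ by the inductive hypothesis, while the end summands ($i=1$ and $i=k$) have the shape $(\mathcal{N}+\mathcal{E}^2)A^k$ with $\mathcal{E}^2A^k\subseteq\mathcal{E}^2\mathcal{N}=0$ (using $A^k\subseteq\mathcal{N}^k\subseteq\mathcal{N}$) and $\mathcal{N}A^k\subseteq\mathcal{N}\mathcal{N}^k\subseteq\mathcal{N}^{k+1}$. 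Hence $A^k=0$ as soon as $\mathcal{N}^k=0$, so $A$ is nilpotent, as wanted.

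I do not expect a serious obstacle here; the one point needing care is handling the lower central series $A^{k+1}=\sum_i A^iA^{k+1-i}$ of the non-associative commutative product correctly, in particular isolating the extreme summands $i=1,k$ in which the whole of $A$ (not merely a power of $\mathcal{N}$) occurs. It is also worth noting that the argument never uses anything about $\mathcal{E}\in\mathcal{T}_\mathbb{K}$ beyond $\mathcal{E}^2$ being a one-dimensional abelian ideal, so the same proof applies to any evolution algebra with one-dimensional abelian derived subalgebra.
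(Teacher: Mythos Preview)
Your proof is correct and shares the paper's overall strategy: assuming $\mathcal{E}^2\nsubseteq\mathcal{N}$, show that $\mathcal{N}+\mathcal{E}^2$ is a strictly larger nilpotent ideal, contradicting maximality. The implementations differ, however. The paper exploits the specific structure of $\mathcal{T}_\mathbb{K}$: from $\mathcal{E}^2\cap\mathcal{N}=0$ it argues that every $e_i$ with $i\in\supp(\mathcal{N})$ must satisfy $e_i^2=0$ (since $e_i^2\in\mathcal{E}^2\cap\mathcal{N}$), whence $\mathcal{N}=\ann(\mathcal{E})$ and so $(\mathcal{E}^2+\mathcal{N})^2=0$ immediately. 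Your route is more abstract: from $\mathcal{E}^2\cap\mathcal{N}=0$ you deduce $\mathcal{E}^2\mathcal{N}=0$ directly (as $\mathcal{E}^2\mathcal{N}\subseteq\mathcal{E}^2\cap\mathcal{N}$), and then run an induction on the lower central series to get $A^k\subseteq\mathcal{N}^k$. The paper's argument is shorter and yields the stronger conclusion $A^2=0$, but it is tied to the natural-basis description of $\mathcal{T}_\mathbb{K}$; your argument is a line longer but, as you note, applies verbatim to any (evolution) algebra whose derived subalgebra is a one-dimensional abelian ideal.
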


\begin{proof}
	Let $\mathcal{N}$ be a maximal nilpotent ideal of $\mathcal{E}$. Assume, for the sake of contradiction, that $\mathcal{E}^2\nsubseteq\mathcal{N}$. In this case, if $i\in\supp(\mathcal{N})$ then it necessarily holds that $e_i^2=0$. Consequently, $\mathcal{N}=\ann(\mathcal{E})$. Nevertheless, note that $\mathcal{E}^2+\mathcal{N}$ is also nilpotent since $(\mathcal{E}^2+\mathcal{N})^2=0$, which contradicts the maximality of $\mathcal{N}$ or the non-nilpotency of $\mathcal{E}$.	 
\end{proof}

\begin{theorem}\label{th:nilrad_dim1}
	Let $\mathcal{E}=\mathcal{E}_k(\lambda_1,\dots,\lambda_n)\in\mathcal{T}_\mathbb{K}$. Then, its nilradical, $\nil(\mathcal{E})$, exists. In fact, assuming, without loss of generality, that $k\geq2$ and $\lambda_1,\dots,\lambda_k\neq0$, it holds that
	\begin{align*}
	\nil\big(\mathcal{E}_k(\lambda_1,\dots,\lambda_n)\big)=\spa\{\lambda_2e_1-\lambda_1e_2,\lambda_3e_1-\lambda_1e_3,\dots,\lambda_ke_1-\lambda_1e_k,e_{k+1},\dots,e_n\}.
	\end{align*}
\end{theorem}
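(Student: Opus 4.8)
The plan is to show that the right-hand side, call it $\mathcal{N}$, is a nilpotent ideal of codimension one, and then that every nilpotent ideal of $\mathcal{E}$ is contained in $\mathcal{N}$; together these yield that $\mathcal{N}$ is the unique maximal nilpotent ideal, i.e. $\nil(\mathcal{E})=\mathcal{N}$. Write $w:=e_1+\dots+e_k$, so that $\mathcal{E}^2=\spa\{w\}$ (one-dimensional since $\lambda_1\neq0$), $e_i^2=\lambda_i w$ for every $i$, and $w^2=\big(\sum_{i=1}^k\lambda_i\big)w=0$ by the defining relation $\sum_{i=1}^k\lambda_i=0$ of $\mathcal{E}_k$. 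The computational backbone is the identity: for $u=\sum_{i=1}^n\mu_ie_i$ one has $ue_i=\mu_i\lambda_i w$ and hence $uw=\big(\sum_{i=1}^k\lambda_i\mu_i\big)w$. In particular $\mathcal{N}$ coincides with $\{u\in\mathcal{E}\colon\sum_{i=1}^k\lambda_i\mu_i=0\}$, the kernel of a nonzero linear form (again $\lambda_1\neq0$), hence of codimension one: the displayed generators all satisfy this equation, are clearly linearly independent, and there are exactly $(k-1)+(n-k)=n-1$ of them.

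First I would verify that $\mathcal{N}$ is an ideal. As $\mathcal{E}$ is commutative and spanned by the natural basis, it suffices to multiply each displayed generator by each $e_i$; every such product is either $0$ or a scalar multiple of some $e_j^2$, hence lies in $\mathcal{E}^2$, and $\mathcal{E}^2\subseteq\mathcal{N}$ since $w$ itself satisfies the defining equation (here one uses $\sum_{i=1}^k\lambda_i=0$). Thus $\mathcal{N}\mathcal{E}\subseteq\mathcal{E}^2\subseteq\mathcal{N}$. For nilpotency, $\mathcal{N}^2\subseteq\mathcal{E}^2=\spa\{w\}$, and $xw=0$ for every $x\in\mathcal{N}$ by the identity above, so $\mathcal{N}^3\subseteq\mathcal{E}^2\mathcal{N}=0$. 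Since $\mathcal{E}$ is not nilpotent and $\mathcal{N}$ has codimension one, there is no nilpotent ideal strictly between $\mathcal{N}$ and $\mathcal{E}$, so $\mathcal{N}$ is a maximal nilpotent ideal.

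It remains to prove uniqueness. Let $\mathcal{M}$ be an arbitrary maximal nilpotent ideal; by Lemma~\ref{lem:lem1} (or simply because $\mathcal{M}$ is an ideal and $uw\in\mathcal{M}$ for $u\in\mathcal{M}$) we have $w\in\mathcal{E}^2\subseteq\mathcal{M}$. Take $u=\sum\mu_ie_i\in\mathcal{M}$ and set $c:=\sum_{i=1}^k\lambda_i\mu_i$, so that $uw=cw$. If $c\neq0$, then $uw\in\mathcal{M}^2$ and hence $w=c^{-1}(uw)\in\mathcal{M}^2$, and inductively $w\in\mathcal{M}^j$ gives $w=c^{-1}(uw)\in\mathcal{M}^1\mathcal{M}^j\subseteq\mathcal{M}^{j+1}$; so $w\in\mathcal{M}^j$ for all $j$, contradicting the nilpotency of $\mathcal{M}$. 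Therefore $c=0$, that is $u\in\mathcal{N}$, so $\mathcal{M}\subseteq\mathcal{N}$; since $\mathcal{N}$ is nilpotent, maximality of $\mathcal{M}$ forces $\mathcal{M}=\mathcal{N}$. This establishes both the existence of $\nil(\mathcal{E})$ and the asserted formula.

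I expect the only slightly fussy part to be the explicit ideal check for $\mathcal{N}$ and the dimension count; the conceptual core — an element $u$ with $uw$ a nonzero multiple of $w$ forces $w$ into every power of the ideal containing $u$ — is very short once the identity $uw=\big(\sum_{i=1}^k\lambda_i\mu_i\big)w$ is at hand, and it is precisely this phenomenon that makes the maximal nilpotent ideal unique here even though, as Example~\ref{ex:1} shows, uniqueness fails in general.
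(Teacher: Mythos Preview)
Your proof is correct and follows essentially the same route as the paper's: show $\mathcal{N}$ is a codimension-one nilpotent ideal containing $\mathcal{E}^2$, then derive a contradiction from any $u$ in a nilpotent ideal with $uw\neq0$ by trapping $w$ in every power of that ideal. Your identification of $\mathcal{N}$ with the kernel of the linear form $u\mapsto\sum_{i=1}^k\lambda_i\mu_i$ (equivalently $\ann_{\mathcal{E}}(\mathcal{E}^2)$, which the paper records separately as Corollary~\ref{lem:annvsnilrad}) streamlines the argument nicely; one small quibble is that your parenthetical alternative to Lemma~\ref{lem:lem1} does not by itself give $w\in\mathcal{M}$, but the lemma citation suffices.
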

\begin{proof}
	For simplicity, we denote by $\mathcal{N}$ the subspace described just above and prove that it is the only maximal nilpotent ideal of  $\mathcal{E}_k(\lambda_1,\dots,\lambda_n)$ with $k\geq2$ and $\lambda_1,\dots,\lambda_k\neq0$. First, $\mathcal{N}$ is an ideal because $\mathcal{E}^2=\spa\{e_1+\dots+e_k\}\subset\mathcal{N}$. In fact, we have that
	\begin{align*}
		(\lambda_2e_1-\lambda_1e_2)&+(\lambda_3e_1-\lambda_1e_2)+\dots+(\lambda_ke_1-\lambda_1e_k)\\&
		=(\lambda_2+\lambda_3+\dots+\lambda_k)e_1-\lambda _1e_2-\lambda_1e_3-\dots-\lambda_1e_k\\&
		=-\lambda_1e_1-\lambda _1e_2-\lambda_1e_3-\dots-\lambda_1e_k
		=-\lambda_1(e_1+\dots+e_k);
	\end{align*}
	and, as $\lambda_1\neq0$ by hypothesis, we conclude that $e_1+\dots+e_k\in\mathcal{N}$. Secondly, $\mathcal{N}$ is maximal since it has codimension one. Thirdly, $\mathcal{N}$ is nilpotent since $\mathcal{N}^3=\mathcal{N}^2\mathcal{N}=0$. In fact, it holds that
	\begin{align}\label{eq:para_siguiente_th}
		(\lambda_ie_1-\lambda_1e_i)(e_1+\dots+e_k)=\lambda_i\lambda_1(e_1+\dots+e_k)-\lambda_1\lambda_i(e_1+\dots+e_k)=0,
	\end{align}
for any $2\leq i\leq k$ and $e_i(e_1+\dots+e_k)=0$ for any $k+1\leq i\leq n$. 

Next, for the sake of contradiction, suppose that there exists another maximal nilpotent ideal $\mathcal{M}$ and consider a nonzero element $u=\sum_{i=1}^n\mu_ie_i$ such that $u\in\mathcal{M}$ but $u\notin\mathcal{N}$. Then, we have that 
\begin{multline}\label{op}
	u+\frac{\mu_2}{\lambda_1}(\lambda_2e_1-\lambda_1e_2)+\dots+\frac{\mu_k}{\lambda_1}(\lambda_ke_1-\lambda_1e_k)-\mu_{k+1}e_{k+1}-\dots-\mu_ne_n\\=\left(\mu_1+\frac{\mu_2\lambda_2}{\lambda_1}+\dots+\frac{\mu_k\lambda_k}{\lambda_1}\right)e_1\neq0.
\end{multline}
Note that if \eqref{op} were equal to zero, there would be a contradiction with the fact that $u\notin\mathcal{N}$. Consequently, $\mu_1+\frac{\mu_2\lambda_2}{\lambda_1}+\dots+\frac{\mu_k\lambda_k}{\lambda_1}\neq0$. Moreover, by Lemma \ref{lem:lem1}, it holds that $e_1+\dots+e_k\in\mathcal{M}$. Additionally, it holds that $u(e_1+\dots+e_k)=0$. Otherwise, $u(e_1+\dots+e_k)=\mathbb{K}^*(e_1+\dots+e_k)$ and, consequently, $\mathcal{M}^{\langle n\rangle}\neq0$ for any $n\in\mathbb{N}$, which contradicts the nilpotency of $\mathcal{M}$. Finally, putting all this together, we get that 
\begin{align*}
	0=&\left(u+\frac{\mu_2}{\lambda_1}(\lambda_2e_1-\lambda_1e_2)+\dots+\frac{\mu_k}{\lambda_1}(\lambda_ke_1-\lambda_1e_k)-\mu_{k+1}e_{k+1}-\dots-\mu_ne_n\right)(e_1+\dots+e_k)\\=&\left(\mu_1+\frac{\mu_2\lambda_2}{\lambda_1}+\dots+\frac{\mu_k\lambda_k}{\lambda_1}\right)e_1(e_1+\dots+e_k)
	=\left(\mu_1+\frac{\mu_2\lambda_2}{\lambda_1}+\dots+\frac{\mu_k\lambda_k}{\lambda_1}\right)e_1^2,
\end{align*}
which is impossible, thus leading to a contradiction with the assumption that $\mathcal{M}$ is another maximal nilpotent ideal.
\end{proof}

The following property will also be instrumental throughout our study. 
\begin{corollary}\label{lem:annvsnilrad}
	Let $\mathcal{E}\in\mathcal{T}
	_{\mathbb{K}}$. Then, $\ann_{\mathcal{E}}(\mathcal{E}^2)\coloneqq\{x\in\mathcal{E}\colon x\mathcal{E}^2=0\}=\nil(\mathcal{E})$.
\end{corollary}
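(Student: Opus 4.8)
The plan is to use the explicit description of $\nil(\mathcal{E})$ from Theorem \ref{th:nilrad_dim1} together with Lemma \ref{lem:lem1}, and prove the two inclusions separately. By Remark \ref{rem:T_parameters} we may assume $\mathcal{E}=\mathcal{E}_k(\lambda_1,\dots,\lambda_n)$ with $k\geq 2$ and $\lambda_1,\dots,\lambda_k\neq 0$, so that $\mathcal{E}^2=\spa\{e_1+\dots+e_k\}$ is one-dimensional. For the inclusion $\nil(\mathcal{E})\subseteq\ann_{\mathcal{E}}(\mathcal{E}^2)$, I would simply note that this computation has essentially already been carried out inside the proof of Theorem \ref{th:nilrad_dim1}: equation \eqref{eq:para_siguiente_th} shows that each generator $\lambda_ie_1-\lambda_1e_i$ ($2\leq i\leq k$) annihilates $e_1+\dots+e_k$, and $e_i(e_1+\dots+e_k)=0$ for $k+1\leq i\leq n$, so every spanning vector of $\nil(\mathcal{E})$ kills $\mathcal{E}^2$; since $\mathcal{E}^2$ is generated by $e_1+\dots+e_k$, this gives $\nil(\mathcal{E})\subseteq\ann_{\mathcal{E}}(\mathcal{E}^2)$.

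For the reverse inclusion, take $u=\sum_{i=1}^n\mu_ie_i$ with $u\mathcal{E}^2=0$, i.e.\ $u(e_1+\dots+e_k)=0$. Computing this product in the natural basis gives $u(e_1+\dots+e_k)=\big(\sum_{i=1}^k\mu_i\lambda_i\big)(e_1+\dots+e_k)$, so the condition $u\in\ann_{\mathcal{E}}(\mathcal{E}^2)$ is equivalent to the single linear equation $\sum_{i=1}^k\mu_i\lambda_i=0$. On the other hand, the displayed description of $\nil(\mathcal{E})$ in Theorem \ref{th:nilrad_dim1} is precisely a spanning set for the $(n-1)$-dimensional subspace cut out by that same equation (the vectors $\lambda_ie_1-\lambda_1e_i$ and $e_j$ all satisfy it, and they are linearly independent, hence span the whole solution space). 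Therefore $\ann_{\mathcal{E}}(\mathcal{E}^2)=\{u:\sum_{i=1}^k\mu_i\lambda_i=0\}=\nil(\mathcal{E})$, which is what we wanted.

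The only mildly delicate point — and the step I would present most carefully — is the dimension count showing that the explicit generating set for $\nil(\mathcal{E})$ and the hyperplane $\sum_{i=1}^k\mu_i\lambda_i=0$ coincide rather than merely being nested; this follows because both are $(n-1)$-dimensional (the linear functional $u\mapsto\sum_{i=1}^k\mu_i\lambda_i$ is nonzero as some $\lambda_i\neq 0$, so its kernel has codimension one, and $\nil(\mathcal{E})$ has codimension one as established in Theorem \ref{th:nilrad_dim1}), and one is contained in the other by the first inclusion. Alternatively, one avoids the count entirely: the first inclusion gives $\nil(\mathcal{E})\subseteq\ann_{\mathcal{E}}(\mathcal{E}^2)$, and since $\mathcal{E}\notin\ann_{\mathcal{E}}(\mathcal{E}^2)$ (as $\mathcal{E}$ is non-nilpotent, so $\mathcal{E}\mathcal{E}^2\neq 0$) while $\nil(\mathcal{E})$ is already maximal (codimension one), equality is forced. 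I expect no real obstacle here; the result is essentially a repackaging of Theorem \ref{th:nilrad_dim1}.
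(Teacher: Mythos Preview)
Your proposal is correct, and the alternative you sketch at the end---$\nil(\mathcal{E})\subseteq\ann_{\mathcal{E}}(\mathcal{E}^2)$, codimension one, and $\ann_{\mathcal{E}}(\mathcal{E}^2)\neq\mathcal{E}$ by non-nilpotency---is exactly the paper's proof. Your primary route via the explicit linear equation $\sum_{i=1}^k\mu_i\lambda_i=0$ is a slightly more computational variant of the same idea, but it arrives at the same place with no extra work needed.
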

\begin{proof}
	As shown in \eqref{eq:para_siguiente_th}, $\mathcal{E}^2\nil(\mathcal{E})=0$. Consequently, $\nil(\mathcal{E})\subset\ann_{\mathcal{E}}(\mathcal{E}^2)$. Now, assume that $\nil(\mathcal{E})\varsubsetneq\ann_{\mathcal{E}}(\mathcal{E}^2)$. As $\nil(\mathcal{E})$ has codimension one and $\ann_{\mathcal{E}}(\mathcal{E}^2)$ is a subspace, then $\ann_{\mathcal{E}}(\mathcal{E}^2)=\mathcal{E}$, which is a contradiction with the non-nilpotency of $\mathcal{E}$.
\end{proof}

After this point, our main aim is to establish a good definition for the nilradical of an evolution algebra. 
\subsection{The basic nilradical of an evolution algebra}

Let $\mathcal{E}$ be an evolution algebra with a natural basis $B$. Following \cite[Definition 3.3]{EL_16}, we recall the ideals $\ann^i(\mathcal{E})$, $i\geq1$, where $\ann^1(\mathcal{E})\coloneqq\ann(\mathcal{E})$ and $\ann^i(\mathcal{E})$ with $i\geq2$ is defined by $\ann^i(\mathcal{E})/\ann^{i-1}(\mathcal{E})\coloneqq\ann(\mathcal{E}/\ann^{i-1}(\mathcal{E}))$. Equivalently, $\ann^i(\mathcal{E})\coloneqq\spa\{e\in B\colon e^2\in\ann^{i-1}(\mathcal{E})\}$ for all $i\geq2$. The chain of ideals:
\[0\subseteq\ann^1(\mathcal{E})\subseteq\dots\subseteq\ann^{r}(\mathcal{E})\subseteq\cdots\]
is called the \textit{upper annihilating series} of $\mathcal{E}$. Notice that, as we are only considering finite-dimensional evolution algebras, there exists an integer $r\geq1$ such that $\ann^r(\mathcal{E})=\ann^{r+1}(\mathcal{E})=\ann^{r+2}(\mathcal{E})=\cdots$, that is, the upper annihilating series stabilises for some $r\geq1$.
\begin{proposition}
Let $\mathcal{E}$ be an evolution algebra, and $r\geq1$ the number of steps until the upper annihilating series stabilises. Then, $\ann^r(\mathcal{E})$ is the largest basic nilpotent ideal of $\mathcal{E}$.
\end{proposition}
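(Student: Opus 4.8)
The plan is to establish three things in turn: that $N:=\ann^r(\mathcal{E})$ is a basic ideal, that it is nilpotent, and that every basic nilpotent ideal of $\mathcal{E}$ is contained in it. The first assertion is immediate from the equivalent description $\ann^i(\mathcal{E})=\spa\{e\in B\colon e^2\in\ann^{i-1}(\mathcal{E})\}$, which presents $N$ as the span of a subset of $B$; that it is an ideal has already been recalled.

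For nilpotency I would first record the inclusion $\ann^i(\mathcal{E})\,\mathcal{E}\subseteq\ann^{i-1}(\mathcal{E})$ for all $i\geq 1$ (with the convention $\ann^0(\mathcal{E})=0$): if $e\in B$ satisfies $e^2\in\ann^{i-1}(\mathcal{E})$, then $e\mathcal{E}\subseteq\spa\{e^2\}\subseteq\ann^{i-1}(\mathcal{E})$, since $ee_j=0$ for every $e_j\neq e$. Iterating this along the chain $0=\ann^0(\mathcal{E})\subseteq\dots\subseteq\ann^r(\mathcal{E})=N$ yields $N^{\langle k\rangle}\subseteq\ann^{r-k+1}(\mathcal{E})$, hence $N^{\langle r+1\rangle}=0$. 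Thus $N$ is right nilpotent, and therefore nilpotent, being commutative (see \cite[Chapter 4, Proposition 1]{ZSSS_82}).

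The main point is maximality, and for this I would isolate two small facts. First, a nonzero nilpotent evolution algebra $I$ has $\ann(I)\neq 0$: if $m$ is minimal with $I^{\langle m\rangle}=0$, then $0\neq I^{\langle m-1\rangle}\subseteq\ann(I)$. Second, if $I=\spa\{e_i\colon i\in\Lambda\}$ is a \emph{basic} ideal, then for $x=\sum_{i\in\Lambda}\mu_ie_i\in I$ one has $xe_j=0$ for $j\notin\Lambda$ and $xe_j=\mu_je_j^2$ for $j\in\Lambda$, so that $xI=x\mathcal{E}$ and consequently $\ann(I)=I\cap\ann(\mathcal{E})$. Combining the two, an evolution algebra with $\ann(\mathcal{E})=0$ has no nonzero basic nilpotent ideal.

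To finish, let $I$ be an arbitrary basic nilpotent ideal and pass to $\bar{\mathcal{E}}=\mathcal{E}/N$. Because the upper annihilating series stabilises at step $r$, $\ann(\bar{\mathcal{E}})=\ann^{r+1}(\mathcal{E})/N=0$; moreover $\bar I$ is again a basic ideal of $\bar{\mathcal{E}}$ (using that $N$ is basic) and is nilpotent, being an epimorphic image of $I$. Hence $\bar I=0$ by the previous step, i.e.\ $I\subseteq N=\ann^r(\mathcal{E})$, which is what we wanted. The only mildly delicate steps are the identity $\ann(I)=I\cap\ann(\mathcal{E})$ for basic ideals and the verification that $\bar{\mathcal{E}}$ still has trivial annihilator while basicness and nilpotency descend to $\bar I$; everything else is bookkeeping.
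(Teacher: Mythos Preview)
Your proof is correct, but it follows a genuinely different route from the paper's argument. The paper first invokes \cite[Theorem~2.7]{CLOR_14} (every nilpotent evolution algebra has a strictly triangular structure matrix) to conclude that the sum of basic nilpotent ideals is again nilpotent, which gives a unique maximal basic nilpotent ideal containing $\ann^r(\mathcal{E})$; it then derives a contradiction from a strictly larger basic nilpotent ideal by again appealing to the triangular form. Your argument instead avoids the triangularization result entirely: you establish nilpotency of $\ann^r(\mathcal{E})$ directly from $\ann^i(\mathcal{E})\,\mathcal{E}\subseteq\ann^{i-1}(\mathcal{E})$, and for maximality you pass to the quotient $\bar{\mathcal{E}}=\mathcal{E}/\ann^r(\mathcal{E})$, observe that $\ann(\bar{\mathcal{E}})=0$, and use the identity $\ann(I)=I\cap\ann(\mathcal{E})$ for basic ideals to rule out any nonzero basic nilpotent ideal in $\bar{\mathcal{E}}$. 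Your approach is more self-contained and makes transparent \emph{why} $\ann^r(\mathcal{E})$ is the right object (it is exactly the preimage of the trivial annihilator in the quotient); the paper's approach is shorter once the external structural result is granted, and it simultaneously yields the side observation that \emph{any} family of basic nilpotent ideals has nilpotent sum.
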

\begin{proof}
First, notice that there exists a unique maximal basic nilpotent ideal since the sum of basic nilpotent ideals is clearly nilpotent due to their strictly triangular structure matrix (see \cite[Theorem 2.7]{CLOR_14}). So, since $\ann^r(\mathcal{E})$ is clearly a basic nilpotent ideal, say $\ann^r(\mathcal{E})=\spa\{e_k,\dots,e_n\}$, it is clearly contained in the largest one.
For the sake of contradiction, assume that $I=\spa\{e_l,\dots,e_n\}$ with $l<k$ is a larger basic nilpotent ideal. As $I$ is a nilpotent evolution algebra by itself, its structure matrix can be supposed to be strictly upper triangular. This implies that $e_{k-1}^2\in\spa\{e_k,\dots,e_n\}=\ann^r(\mathcal{E})$, contradicting the definition of $r$.
\end{proof}
Because of the previous result, we introduce the following definition.
\begin{definition}\label{def:basic_nilrad}
The \textit{basic nilradical} of an evolution algebra $\mathcal{E}$, denoted by $\bnil(\mathcal{E})$, is defined as the largest nilpotent basic ideal of $\mathcal{E}$. Equivalently, $\bnil(\mathcal{E})=\ann^r(\mathcal{E})$, where $r$ is the number of steps until the upper annihilating series of $\mathcal{E}$ stabilises.
\end{definition}
However, the basic nilradical does not coincide with the nilradical in the particular case considered in Theorem \ref{th:nilrad_dim1}. In fact, if $\mathcal{E}\in\mathcal{T}_\mathbb{K}$ then $\bnil(\mathcal{E})=\ann(\mathcal{E})\subsetneq\nil(\mathcal{E})$. This makes us realise that the basic nilradical is far from being a suitable definition for the nilradical of an evolution algebra.

\subsection{The supersolvable nilradical of an evolution algebra}
Let $\mathcal{E}$ be an evolution algebra with a natural basis $B$. We introduce the sequence of ideals $\mathcal{N}^i(\mathcal{E})$, $i\geq1$. To define $\mathcal{N}^1(\mathcal{E})$, we consider the sum of all one-dimensional abelian ideals, which is actually the sum of a finite number of them, namely $I_{1,j}=\spa\{w_{1,j}\}$ for $j\in\varLambda_1$.
By Proposition~\ref{prop:ab_ideals}, we consider a partition of the set $\varLambda_1=\varGamma_1\sqcup\overline{\varGamma_1}$ such that $I_{1,j}\subset\ann(\mathcal{E})$ for any $j\in\varGamma_1$ (in fact, $\sum_{j\in\varGamma_1}I_{1,j}=\ann(\mathcal{E})$) and such that every $I_{1,j}$ with $j\in\overline{\varGamma_1}$ is the derived subalgebra of the basic ideal $\mathcal{E}_{1,j}=\spa\{e_k\colon k\in\supp(w_{1,j})\}\in\mathcal{T}_\mathbb{K}$ of $\mathcal{E}$.
Since the nilradical of each $\mathcal{E}_{1,j}$ with $j\in\overline{\varGamma_1}$ is characterised by Theorem \ref{th:nilrad_dim1}, we define the ideal  
\begin{align}\label{eq:def_n1}
\mathcal{N}^1(\mathcal{E})\coloneqq\sum_{j\in\overline{\varGamma_1}}\nil(\mathcal{E}_{1,j})+\sum_{j\in\varGamma_1}I_{1,j}=\sum_{I\subset\mathcal{E}\text{ basic ideal, }I\in\mathcal{T}_\mathbb{K}}\nil(I)+\ann(\mathcal{E}).
\end{align}

Inductively, to define $\mathcal{N}^{i}(\mathcal{E})$ with $i\geq2$, assume that $\mathcal{N}^{i-1}(\mathcal{E})$ is an ideal of $\mathcal{E}$ and consider the sum of all $(1+\dim{\mathcal{N}^{i-1}(\mathcal{E})})$-dimensional ideals which can be written as $\spa\{w\}+\mathcal{N}^{i-1}(\mathcal{E})$ with
$w^2\in\mathcal{N}^{i-1}(\mathcal{E})$ and $\supp(w)\cap\supp(\mathcal{N}^{i-1}(\mathcal{E}))=\emptyset$. Again, this sum can be supposed to be the sum of a finite number of them, namely $I_{i,j}=\spa\{w_{i,j}\}+\mathcal{N}^{i-1}(\mathcal{E})$ for $j\in\varLambda_i$, such that
\begin{align}\label{eq:cond_soporte}
w_{i,j}^2\in\mathcal{N}^{i-1}(\mathcal{E}) \quad\text{and}\quad \supp(w_{i,j})\cap\supp\big(\mathcal{N}^{i-1}(\mathcal{E})\big)=\emptyset \text{ for any } j\in\varLambda_i.
\end{align}
By construction, every  $\overline{I_{i,j}}=I_{i,j}/\mathcal{N}^{i-1}(\mathcal{E})=\spa\{\overline{w_{i,j}}\}$ for $j\in\varLambda_i$ is a one-dimensional abelian ideal of the quotient evolution algebra $\mathcal{E}/\mathcal{N}^{i-1}(\mathcal{E})$. Then, again applying Proposition \ref{prop:ab_ideals}, consider a partition $\varLambda_i=\varGamma_i\sqcup\overline{\varGamma_i}$ such that $\overline{I_{i,j}}\subset\ann\big(\mathcal{E}/\mathcal{N}^{i-1}(\mathcal{E})\big)$ for any $j\in\varGamma_i$ and such that every $\overline{I_{i,j}}$, with $j\in\overline{\varGamma_i}$, is the derived subalgebra of the basic ideal $\mathcal{E}_{i,j}=\spa\{\overline{e_k}\colon k\in\supp(w_{i,j})\}\in\mathcal{T}_\mathbb{K}$ of $\mathcal{E}/\mathcal{N}^{i-1}(\mathcal{E})$.
Since the nilradical of every $\mathcal{E}_{i,j}$ with $j\in\overline{\varGamma_1}$ is characterised, we define $\mathcal{N}^{i}(\mathcal{E})$ by 
\begin{align}\label{eq:def_ni}
\mathcal{N}^{i}(\mathcal{E})/\mathcal{N}^{i-1}(\mathcal{E})\coloneqq\sum_{j\in\overline{\varGamma_i}}\nil(\mathcal{E}_{i,j})+\sum_{j\in\varGamma_i}\overline{I_{i,j}},
\end{align}
which is clearly an ideal of  $\mathcal{E}/\mathcal{N}^{i-1}(\mathcal{E})$ as it is the sum of ideals of $\mathcal{E}/\mathcal{N}^{i-1}(\mathcal{E})$. Consequently, $\mathcal{N}^{i}(\mathcal{E})$ is an ideal of $\mathcal{E}$.

\begin{remark}	
	Notice that the sum of $\sum_{j\in\overline{\varGamma_1}}\nil(\mathcal{E}_{1,j})$ and $\sum_{j\in\varGamma_1}I_{1,j}$ in \eqref{eq:def_n1} is not necessarily direct. Consider the evolution algebra $\mathcal{E}=\mathcal{E}_3(1,-1,0)$. Its one-dimensional abelian ideals are $I_{1,1}=\spa\{e_3\}$ and $I_{1,2}=\spa\{e_1+e_2+e_3\}$. Notice that $I_{1,1}=\ann(\mathcal{E})$ and that $\nil(\mathcal{E}_{1,2})=\nil(\mathcal{E})=\spa\{e_1+e_2,e_3\}$. Nevertheless, $\nil(\mathcal{E}_{1,2})\cap I_{1,1}=\spa\{e_3\}\neq0$.
\end{remark}
\begin{remark}
	Notice that, unlike what happens in \eqref{eq:def_n1},
	$\sum_{j\in\varGamma_i}\overline{I_{i,j}}$ does not necessarily coincides with  $\ann\big(\mathcal{E}/\mathcal{N}^{i-1}(\mathcal{E})\big)$ in \eqref{eq:def_ni}.
\end{remark}

Subsequently, we prove that every term in the chain of ideals
\[0\subseteq\mathcal{N}^1(\mathcal{E})\subseteq\dots\subseteq\mathcal{N}^r(\mathcal{E})\subseteq\cdots\] is nilpotent and $\mathcal{E}$-supersolvable. 

\begin{proposition}\label{prop:nilp_super}
	$\mathcal{N}^{i}(\mathcal{E})$ is an $\mathcal{E}$-supersolvable nilpotent ideal of $\mathcal{E}$ for every $i\geq1$.
\end{proposition}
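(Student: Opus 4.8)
The plan is to argue by induction on $i$, using the recursive structure of the definition together with the $\mathcal{E}$-supersolvability properties collected in the remark after the definition of $\mathcal{E}$-supersolvable ideals. For the base case $i=1$, I would show separately that $\mathcal{N}^1(\mathcal{E})$ is nilpotent and that it is $\mathcal{E}$-supersolvable. For nilpotency, observe that $\mathcal{N}^1(\mathcal{E})$ is the sum of the ideals $\nil(\mathcal{E}_{1,j})$ for $j\in\overline{\varGamma_1}$ together with $\ann(\mathcal{E})$; since $\mathcal{E}^2$ annihilates each $\nil(\mathcal{E}_{1,j})$ by \eqref{eq:para_siguiente_th} (equivalently by Corollary~\ref{lem:annvsnilrad}, $\nil(\mathcal{E}_{1,j})=\ann_{\mathcal{E}_{1,j}}(\mathcal{E}_{1,j}^2)$), and since $\ann(\mathcal{E})^2=0$, the product of any two basis generators of $\mathcal{N}^1(\mathcal{E})$ lands in $\mathcal{E}^2\cap\mathcal{N}^1(\mathcal{E})$, which in turn is annihilated by $\mathcal{N}^1(\mathcal{E})$; hence $\mathcal{N}^1(\mathcal{E})^3=0$. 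For $\mathcal{E}$-supersolvability, each $\nil(\mathcal{E}_{1,j})$ has the explicit basis given in Theorem~\ref{th:nilrad_dim1}, and I would exhibit a complete flag of ideals of $\mathcal{E}$ inside it: start with $\spa\{e_1+\dots+e_k\}=\mathcal{E}_{1,j}^2$, which is an ideal of $\mathcal{E}$ (being one-dimensional and equal to a subspace of $\mathcal{E}^2$ stabilised by multiplication), then adjoin the vectors $\lambda_me_1-\lambda_1e_m$ and the $e_\ell$ for $\ell>k$ one at a time — at each stage the span is an ideal of $\mathcal{E}$ because the extra generators are annihilated by $\mathcal{E}^2$ and the previously-adjoined part already contains $\mathcal{E}_{1,j}^2$. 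Likewise $\ann(\mathcal{E})$ is $\mathcal{E}$-supersolvable (a flag of basic ideals obtained by adding annihilator basis vectors one at a time). The sum of $\mathcal{E}$-supersolvable ideals being $\mathcal{E}$-supersolvable, $\mathcal{N}^1(\mathcal{E})$ is $\mathcal{E}$-supersolvable.

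For the inductive step, assume $\mathcal{N}^{i-1}(\mathcal{E})$ is a nilpotent $\mathcal{E}$-supersolvable ideal. Passing to $\overline{\mathcal{E}}=\mathcal{E}/\mathcal{N}^{i-1}(\mathcal{E})$, the ideal $\mathcal{N}^{i}(\mathcal{E})/\mathcal{N}^{i-1}(\mathcal{E})$ is exactly an ideal of $\overline{\mathcal{E}}$ of the same shape as $\mathcal{N}^1(\overline{\mathcal{E}})$ — a sum of nilradicals of basic ideals of $\overline{\mathcal{E}}$ lying in $\mathcal{T}_\mathbb{K}$, together with part of $\ann(\overline{\mathcal{E}})$ — so by the base-case argument applied in $\overline{\mathcal{E}}$ it is a nilpotent $\overline{\mathcal{E}}$-supersolvable ideal. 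Then I invoke the listed property: if $\mathcal{N}^{i-1}(\mathcal{E})$ is $\mathcal{E}$-supersolvable and $\mathcal{N}^{i}(\mathcal{E})/\mathcal{N}^{i-1}(\mathcal{E})$ is $(\mathcal{E}/\mathcal{N}^{i-1}(\mathcal{E}))$-supersolvable, then $\mathcal{N}^{i}(\mathcal{E})$ is $\mathcal{E}$-supersolvable. Finally, nilpotency of $\mathcal{N}^{i}(\mathcal{E})$ follows because it is an extension of the nilpotent ideal $\mathcal{N}^{i-1}(\mathcal{E})$ by a nilpotent ideal, and by \eqref{eq:cond_soporte} the new generators $w_{i,j}$ satisfy $w_{i,j}^2\in\mathcal{N}^{i-1}(\mathcal{E})$ with support disjoint from $\supp(\mathcal{N}^{i-1}(\mathcal{E}))$, so the associated structure matrix is strictly triangular; alternatively, one checks directly that a suitable power of $\mathcal{N}^{i}(\mathcal{E})$ vanishes by combining the bound $\mathcal{N}^{i}(\mathcal{E})^3\subseteq\mathcal{N}^{i-1}(\mathcal{E})$ with nilpotency of $\mathcal{N}^{i-1}(\mathcal{E})$.

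The main obstacle I anticipate is the bookkeeping in the nilpotency estimate for $\mathcal{N}^{i}(\mathcal{E})$: one must be careful that products of the new generators $w_{i,j}$ with the old ones, and with each other, all land in $\mathcal{N}^{i-1}(\mathcal{E})$ and are then killed after boundedly many further multiplications — the support-disjointness condition \eqref{eq:cond_soporte} is precisely what prevents a ``feedback loop'' like the one in Example~\ref{ex:1} that destroys nilpotency. The cleanest way around it is probably to phrase everything in terms of strictly triangular structure matrices via \cite[Theorem~2.7]{CLOR_14} rather than chasing products by hand: order a natural basis so that $\supp(\mathcal{N}^{i-1}(\mathcal{E}))$ comes last, the generators of type $\nil$ next, and the annihilator-type generators first, and verify the matrix is strictly triangular in this order. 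The $\mathcal{E}$-supersolvability part is, by contrast, essentially formal once the base case is done, since it reduces to the four closure properties already recorded.
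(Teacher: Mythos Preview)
Your proposal is correct and follows essentially the same induction scheme as the paper's proof. The only notable presentational difference is in the base-case $\mathcal{E}$-supersolvability: you build an explicit flag inside each $\nil(\mathcal{E}_{1,j})$, whereas the paper simply remarks that every $\mathcal{E}_{1,j}$ (the full basic ideal in $\mathcal{T}_\mathbb{K}$) is $\mathcal{E}$-supersolvable---hence so is its sub-ideal $\nil(\mathcal{E}_{1,j})$---and then invokes closure under sums. For nilpotency, both arguments amount to $\mathcal{N}^1(\mathcal{E})^{\langle2\rangle}\subseteq\spa\{w_{1,j}\colon j\in\varLambda_1\}$ and $\mathcal{N}^1(\mathcal{E})^{\langle3\rangle}=0$; and in the inductive step the paper records precisely the bound you sketch, namely $\mathcal{N}^{i+1}(\mathcal{E})^{\langle k\rangle}\subset\mathcal{N}^{i}(\mathcal{E})^{\langle k-2\rangle}$ for $k\geq3$, which is exactly what the support-disjointness condition~\eqref{eq:cond_soporte} buys (your ``no feedback loop'' observation). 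Your first suggestion that nilpotency follows merely from ``extension of nilpotent by nilpotent'' would not suffice on its own in this non-associative setting, but you correctly note this and fall back on the explicit power estimate, which is what the paper does.
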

\begin{proof}
	We use induction on $i\geq1$. First, notice that every $\mathcal{E}_{1,j}$ is $\mathcal{E}$-supersolvable. Then, $\mathcal{N}^1(\mathcal{E})$ is $\mathcal{E}$-supersolvable since it is the sum of $\mathcal{E}$-supersolvable ideals.  Moreover, it holds that $\mathcal{N}^1(\mathcal{E})^{\langle2\rangle}=\spa\{w_{1,j}\colon j\in\varLambda_1\}$ and, by Lemma \ref{lem:annvsnilrad}, $\mathcal{N}^1(\mathcal{E})^{\langle3\rangle}=0$, what yields the nilpotency of $\mathcal{N}^1(\mathcal{E})$. Then, assume the assertion is true for $i$, that is, $\mathcal{N}^{i}(\mathcal{E})$ is an $\mathcal{E}$-supersolvable nilpotent ideal. Hence, $\mathcal{E}$-supersolvability follows straightforwardly from the fact that $\mathcal{N}^{i+1}(\mathcal{E})/\mathcal{N}^{i}(\mathcal{E})$ is clearly $(\mathcal{E}/\mathcal{N}^{i}(\mathcal{E}))$-supersolvable. For nilpotency, just notice that $\mathcal{N}^{i+1}(\mathcal{E})^{\langle2\rangle}\subset\spa\{w_{i+1,j}\colon j\in\varLambda_{i+1}\}+\mathcal{N}^{i}(\mathcal{E})$ and, by \eqref{eq:cond_soporte}, $\mathcal{N}^{i+1}(\mathcal{E})^{\langle k\rangle}\subset\mathcal{N}^{i}(\mathcal{E})^{\langle k-2\rangle}$ for any $k\geq3$. The result follows.
\end{proof}
Hence, we introduce the following definition.
\begin{definition}\label{def:super_nilp}
	Let $\mathcal{E}$ be an evolution algebra. The chain of $\mathcal{E}$-supersolvable nilpotent ideals defined by \eqref{eq:def_n1} and \eqref{eq:def_ni},
	\[0\subseteq\mathcal{N}^1(\mathcal{E})\subseteq\dots\subseteq\mathcal{N}^r(\mathcal{E})\subseteq\cdots,\] will be called the \textit{$\mathcal{E}$-supersolvable nilpotent series of $\mathcal{E}$}.
\end{definition}

Our main objective is now to prove that, given an evolution algebra $\mathcal{E}$, the term $\mathcal{N}^r(\mathcal{E})$ in the $\mathcal{E}$-supersolvable nilpotent series, where $r\geq1$ is the number of steps until the series stabilises, is the largest $\mathcal{E}$-supersolvable nilpotent ideal of $\mathcal{E}$.

Before proceeding, we first characterise the nilpotent ideals of an evolution algebra $\mathcal{E}$ in terms of its $\mathcal{E}$-supersolvable nilpotent series, which will be instrumental in the following steps. To fix notation, given a subspace $U$ of an evolution algebra $\mathcal{E}$, we denote by $\pi_U$ the linear projection $\pi_U\colon\mathcal{E}\longrightarrow\spa\{e_i\colon i\in\supp(U)\}$.

\begin{proposition}\label{prop:nilp_series}
	Let $\mathcal{E}$ be an evolution algebra, $I$ an ideal and $\mathcal{N}^k(\mathcal{E})$ the largest term of the $\mathcal{E}$-supersolvable nilpotent series contained in $I$. Then, $I$ is nilpotent if and only if there exists $l\in\mathbb{N}$ such that $I^{\langle l \rangle}\subset\mathcal{N}^k(\mathcal{E})$ and $\pi_{\mathcal{N}^k(\mathcal{E})}(w)\in\mathcal{N}^k(\mathcal{E})$ for any $w\in I$.
\end{proposition}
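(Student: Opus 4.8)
The plan is to prove the two implications separately. Throughout, write $N:=\mathcal{N}^k(\mathcal{E})$, $S:=\supp(N)$ and $\hat N:=\spa\{e_i\colon i\in S\}$; by Remark~\ref{rem:id} the latter is an ideal of $\mathcal{E}$, it is the image of $\pi_N$, and by hypothesis $N\subseteq I$ (since $N$ is the largest series term contained in $I$).

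For $(\Leftarrow)$, assume $I^{\langle l\rangle}\subseteq N$ and $\pi_N(w)\in N$ for every $w\in I$. First I would note that the support hypothesis forces a vector-space decomposition $I=N\oplus W$, where $W:=I\cap\spa\{e_i\colon i\notin S\}$: each $w\in I$ splits as $w=\pi_N(w)+(w-\pi_N(w))$ with $\pi_N(w)\in N\subseteq I$, hence $w-\pi_N(w)\in I$ and is supported off $S$. Now $N$ is supported on $S$ and $W$ off $S$, and two elements of an evolution algebra with disjoint supports multiply to zero; since every right power $N^{\langle s\rangle}$ is contained in $N$ and thus supported on $S$, this gives $N^{\langle s\rangle}W=0$ and therefore $N^{\langle s\rangle}I=N^{\langle s\rangle}N=N^{\langle s+1\rangle}$. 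An induction on $t$ then yields $I^{\langle l+t\rangle}\subseteq N^{\langle t+1\rangle}$ for all $t\ge0$. As $N$ is nilpotent by Proposition~\ref{prop:nilp_super} and $\mathcal{E}$ is commutative, $N^{\langle q\rangle}=0$ for some $q$, so $I^{\langle l+q-1\rangle}=0$ and $I$ is nilpotent.

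For $(\Rightarrow)$, assume $I$ is nilpotent. The first condition is immediate: choosing $l$ with $I^{\langle l\rangle}=0$ gives $I^{\langle l\rangle}\subseteq N$. For the support condition I would argue by contradiction, supposing $u:=\pi_N(w)\notin N$ for some $w\in I$. Using induction on $k$ and passing, when $\mathcal{N}^{k-1}(\mathcal{E})\subseteq I$, to the quotient $\mathcal{E}/\mathcal{N}^{k-1}(\mathcal{E})$, one reduces to the situation in which the component of $u$ witnessing $u\notin N$ lies inside one of the basic ideals $\mathcal{E}_{i,j}\in\mathcal{T}_\mathbb{K}$ that appear in the construction of the series. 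By Theorem~\ref{th:nilrad_dim1} and Corollary~\ref{lem:annvsnilrad}, the failure $u\notin N$ means precisely that this component is not annihilated by $\mathcal{E}_{i,j}^2=\spa\{e_1+\dots+e_m\}$. Multiplying $w$ by an appropriate natural basis element of that block then shows first that $e_1+\dots+e_m\in I$ and then that $w(e_1+\dots+e_m)=c(e_1+\dots+e_m)$ with $c\neq0$. Hence the left multiplication $L_w\colon I\to I$ has the nonzero eigenvalue $c$; but on a nilpotent commutative algebra every left multiplication is nilpotent (because $L_w^m(I)\subseteq I^{\langle m+1\rangle}$), a contradiction.

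I expect the main obstacle to be the reduction step in $(\Rightarrow)$. Since the $\mathcal{E}$-supersolvable nilpotent series is built with the additional requirement $\supp(w_{i,j})\cap\supp(\mathcal{N}^{i-1}(\mathcal{E}))=\emptyset$, the quotient $\mathcal{N}^{k}(\mathcal{E})/\mathcal{N}^{k-1}(\mathcal{E})$ need not coincide with $\mathcal{N}^1(\mathcal{E}/\mathcal{N}^{k-1}(\mathcal{E}))$, so the induction must be carried out carefully: one has to keep track of both the $\mathcal{T}_\mathbb{K}$-block contributions and the annihilator contributions at each level and check that the hypotheses on $I$ descend to the quotient and isolate a single block. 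Once the right block has been located the eigenvalue argument is short, and the implication $(\Leftarrow)$ is, by contrast, a formal computation as soon as the orthogonality $N^{\langle s\rangle}W=0$ has been observed.
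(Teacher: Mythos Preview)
Your $(\Leftarrow)$ direction is essentially identical to the paper's: both decompose $I=N+W$ with $W$ supported off $\supp(N)$, observe that disjoint supports force $N^{\langle s\rangle}W=0$, and then compute $I^{\langle l+s\rangle}\subseteq N^{\langle s\rangle}=0$ for the nilpotency index $s$ of $N$.

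For $(\Rightarrow)$, the core mechanism --- locating an element on which repeated multiplication by $w$ never vanishes --- is also the same, but you have wrapped it in an induction on $k$ and a passage to quotients that you yourself identify as the main obstacle. The paper avoids this entirely. It argues directly: if $\pi_N(w)\notin N$, then by the layered construction of $N=\mathcal{N}^k(\mathcal{E})$ together with Corollary~\ref{lem:annvsnilrad}, there exist $i\le k$ and $j\in\overline{\varGamma_i}$ with $w\cdot w_{i,j}\in\mathbb{K}^*w_{i,j}+\mathcal{N}^{i-1}(\mathcal{E})$. No quotienting and no induction are needed to find this $(i,j)$; one simply scans the levels of the series. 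Since $w_{i,j}\in\mathcal{N}^i(\mathcal{E})\subseteq N\subseteq I$ automatically (your step ``multiplying $w$ by an appropriate natural basis element \dots\ shows first that $e_1+\dots+e_m\in I$'' is superfluous), and since $\mathcal{N}^{i-1}(\mathcal{E})$ is an ideal, iterating gives $(\cdots((ww_{i,j})w)\cdots)w\in c^m\,w_{i,j}+\mathcal{N}^{i-1}(\mathcal{E})$ with $c\neq0$, which is nonzero for all $m$. This is exactly your eigenvalue argument, except that the relation $w\cdot w_{i,j}=c\,w_{i,j}$ you write only holds modulo $\mathcal{N}^{i-1}(\mathcal{E})$, and the paper simply works with that congruence instead of passing to a quotient. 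Your concern about whether $\mathcal{N}^k/\mathcal{N}^{k-1}$ coincides with $\mathcal{N}^1(\mathcal{E}/\mathcal{N}^{k-1})$ is legitimate for an inductive proof, but it evaporates once you realise the induction is unnecessary.
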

\begin{proof}
	First, we prove the sufficiency. By hypothesis, $\pi_{\mathcal{N}^k(\mathcal{E})}(w)\in\mathcal{N}^k(\mathcal{E})$ for all $w\in I$. Hence, just performing the corresponding elementary operations, $I$ can be written as $K+\mathcal{N}^k(\mathcal{E})$, where $K=\spa\{w_1,\dots,w_r\}$ is a subspace (not necessarily a subalgebra) such that $\supp(K)\cap\supp(\mathcal{N}^k(\mathcal{E}))=\emptyset$. Consequently, $K\cdot\mathcal{N}^k(\mathcal{E})=0$. Then, as there exist $l,s\in\mathbb{N}$ such that $I^{\langle l \rangle}\subset\mathcal{N}^k(\mathcal{E})$ and $\mathcal{N}^k(\mathcal{E})^{\langle s \rangle}=0$, it holds that 
	\begin{align*}
	I^{\langle l+s \rangle}&=(\cdots(I^{\langle l\rangle}\cdot I)\overset{s)}{\,\cdots})\cdot I\subseteq(\cdots(\mathcal{N}^k(\mathcal{E})\cdot I)\overset{s)}{\,\cdots})\cdot I\\
	&=(\cdots(\mathcal{N}^k(\mathcal{E})\cdot (K+\mathcal{N}^k(\mathcal{E})))\overset{s)}{\,\cdots})\cdot (K+\mathcal{N}^k(\mathcal{E}))=\mathcal{N}^k(\mathcal{E})^{\langle s \rangle}=0.
	\end{align*}
	
	Next, we prove the necessity of both conditions. On the one hand, if $I^{\langle l\rangle}\nsubseteq\mathcal{N}^k(\mathcal{E})$ for any $l\in\mathbb{N}$, then $I^{\langle l\rangle}\neq0$ for any $l\in\mathbb{N}$, which contradicts the nilpotency of $I$. On the other hand, if there exists an element $w\in I$ such that $\pi_{\mathcal{N}^k(\mathcal{E})}(w)\notin\mathcal{N}^k(\mathcal{E})$, then, by Lemma \ref{lem:annvsnilrad}, there exists an index $i\leq k$ and an element $w_{i,j}$ with $j\in\overline{\varGamma_i}$ such that $w w_{i,j}\in\mathbb{K}^*w_{i,j}+\mathcal{N}^{i-1}(\mathcal{E})$. Consequently, $(\cdots((ww_{i,j})w)\cdots)w\neq0$, which again contradicts the nilpotency of $I$.
\end{proof}

As shown in Example \ref{ex:1}, the sum of two nilpotent ideals in an evolution algebra is not necessarily nilpotent. However, we next prove that this holds when one of the summands is the largest term of the $\mathcal{E}$-supersolvable nilpotent series, $\mathcal{N}^r(\mathcal{E})$.

\begin{proposition}\label{prop:sumsnil}
	Let $\mathcal{E}$ be an evolution algebra. If $I$ is a nilpotent ideal, then $I+\mathcal{N}^r(\mathcal{E})$ is also a nilpotent ideal.
\end{proposition}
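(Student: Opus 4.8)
The plan is to derive this from Proposition~\ref{prop:nilp_series}. Set $N:=\mathcal{N}^r(\mathcal{E})$, the term at which the $\mathcal{E}$-supersolvable nilpotent series stabilises; by Proposition~\ref{prop:nilp_super} it is a nilpotent ideal, and being the largest term of the series it is in particular the largest term of the series contained in the ideal $J:=I+\mathcal{N}^r(\mathcal{E})$. So, by the sufficiency direction of Proposition~\ref{prop:nilp_series} applied to $J$ with the term $N$, it suffices to prove that (i)~$J^{\langle l\rangle}\subseteq N$ for some $l\in\mathbb{N}$, and (ii)~$\pi_N(w)\in N$ for every $w\in J$. Claim~(i) is routine: an induction on $m$ gives $J^{\langle m\rangle}\subseteq I^{\langle m\rangle}+N$, using that $N$ is an ideal, so $I^{\langle m\rangle}N$, $NI$ and $NN$ all lie in $N$; and since $I$ is nilpotent, hence right nilpotent, $I^{\langle l\rangle}=0$ for $l$ large enough.

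For (ii), write $w=u+v$ with $u\in I$ and $v\in N$; as $v\in N\subseteq\spa\{e_i\colon i\in\supp(N)\}$, the projection $\pi_N$ fixes $v$, so $\pi_N(w)=\pi_N(u)+v$ and it is enough to show that $\pi_{\mathcal{N}^r(\mathcal{E})}(u)\in\mathcal{N}^r(\mathcal{E})$ for every $u\in I$. I would argue by contradiction: assume $\pi_{\mathcal{N}^r(\mathcal{E})}(u)\notin\mathcal{N}^r(\mathcal{E})$ for some $u\in I$. Since $\mathcal{N}^r(\mathcal{E})$ is a term of the $\mathcal{E}$-supersolvable nilpotent series, the same reasoning as in the necessity part of the proof of Proposition~\ref{prop:nilp_series} — which invokes Corollary~\ref{lem:annvsnilrad} inside the quotients $\mathcal{E}/\mathcal{N}^{i-1}(\mathcal{E})$ — produces an index $i\leq r$ and an element $w_{i,j}$ with $j\in\overline{\varGamma_i}$, occurring in the construction of $\mathcal{N}^i(\mathcal{E})$, such that $u\,w_{i,j}\in\mathbb{K}^*w_{i,j}+\mathcal{N}^{i-1}(\mathcal{E})$.

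To close the argument I would iterate. As $u\in I$ and $I$ is an ideal, $u\,w_{i,j}\in I$; and since $\mathcal{N}^{i-1}(\mathcal{E})$ is an ideal and $\mathcal{E}$ is commutative, a short induction shows that, for every $t\geq1$, the $t$-fold product $\bigl(\cdots\bigl((u\,w_{i,j})\,u\bigr)\cdots\bigr)u$ (with $t$ factors $u$) lies in $\mathbb{K}^*w_{i,j}+\mathcal{N}^{i-1}(\mathcal{E})$ and in $I^{\langle t\rangle}$. By construction $\supp(w_{i,j})\cap\supp(\mathcal{N}^{i-1}(\mathcal{E}))=\emptyset$, so this product is nonzero; hence $I^{\langle t\rangle}\neq0$ for all $t$, contradicting the right nilpotency of $I$. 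This establishes (ii), and together with (i) Proposition~\ref{prop:nilp_series} yields that $J=I+\mathcal{N}^r(\mathcal{E})$ is nilpotent. The main obstacle is the step producing $w_{i,j}$: one must verify that the argument of Proposition~\ref{prop:nilp_series}, originally phrased for the largest term of the series contained in a given nilpotent ideal and for an element witnessing its failure, goes through verbatim for the term $\mathcal{N}^r(\mathcal{E})$ and for an arbitrary $u\in I$, so that Corollary~\ref{lem:annvsnilrad} still delivers a non-terminating chain of products inside $I$.
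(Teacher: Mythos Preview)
Your proof is correct and follows essentially the same approach as the paper's own argument: both verify the two hypotheses of Proposition~\ref{prop:nilp_series} for $J=I+\mathcal{N}^r(\mathcal{E})$, with the same induction for~(i) and the same contradiction via an iterated product against a suitable $w_{i,j}$ for~(ii). The obstacle you flag at the end is not a genuine gap: the necessity argument in Proposition~\ref{prop:nilp_series} that produces $w_{i,j}$ from $\pi_{\mathcal{N}^k(\mathcal{E})}(w)\notin\mathcal{N}^k(\mathcal{E})$ uses only the layered construction of the series and Corollary~\ref{lem:annvsnilrad}, not the nilpotency of the ambient ideal nor the maximality of $k$, so it applies verbatim with $k=r$ and $w=u\in I$ --- and indeed the paper invokes it in exactly this way without further comment.
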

\begin{proof}
	We first show that $(I+\mathcal{N}^r(\mathcal{E}))^{\langle k\rangle}\subseteq I^{\langle k\rangle}+\mathcal{N}^r(\mathcal{E})$ for any $k\geq1$ by induction on $k$. When $k=1$, the result is trivially true. Then, suppose the assertion is true for $k$. Hence, it follows that 
	\begin{align*}
		(I+\mathcal{N}^r(\mathcal{E}))^{\langle k+1\rangle}&\subseteq(I^{\langle k\rangle}+\mathcal{N}^r(\mathcal{E}))(I+\mathcal{N}^r(\mathcal{E}))\\
		&=I^{\langle k+1\rangle}+\mathcal{N}^r(\mathcal{E})(I^{\langle k\rangle}+I+\mathcal{N}^r(\mathcal{E}))\subseteq I^{\langle k+1\rangle}+\mathcal{N}^r(\mathcal{E}),
	\end{align*}
	and the claim is established. Moreover, as $I$ is nilpotent by hypothesis, there exists a number $l\in\mathbb{N}$ such that $I^{\langle l \rangle}=0$ and, consequently, $(I+\mathcal{N}^r(\mathcal{E}))^{\langle l\rangle}\subseteq\mathcal{N}^r(\mathcal{E})$.
	
	Next, we claim that $\pi_{\mathcal{N}^r(\mathcal{E})}(w)\in\mathcal{N}^r(\mathcal{E})$ for any $w\in I+\mathcal{N}^r(\mathcal{E})$. Otherwise, by Lemma~\ref{lem:annvsnilrad}, there would exist an element $w\in I$, an index $i\leq r$ and an element $w_{i,j}$ with $j\in\overline{\varGamma_i}$ such that $ww_{i,j}\in\mathbb{K}^*w_{i,j}+\mathcal{N}^{i-1}(\mathcal{E})$; and moreover, $ww_{i,j}\in I$ since $I$ is an ideal. Consequently, $(\cdots(((ww_{i,j})w)w)\cdots)w\neq0$, which contradicts the nilpotency of $I$.
	
	As $(I+\mathcal{N}^r(\mathcal{E}))^{\langle l\rangle}\subseteq\mathcal{N}^r(\mathcal{E})$ and $\pi_{\mathcal{N}^r(\mathcal{E})}(w)\in\mathcal{N}^r(\mathcal{E})$ for any $w\in I+\mathcal{N}^r(\mathcal{E})$, the result follows from Proposition \ref{prop:nilp_series}.
\end{proof}

\begin{theorem}
	Let $\mathcal{E}$ be an evolution algebra, and $r\geq1$ the number of steps until the $\mathcal{E}$-supersolvable nilpotent series stabilises. Then, $\mathcal{N}^r(\mathcal{E})$ is the largest $\mathcal{E}$-supersolvable nilpotent ideal of $\mathcal{E}$.
\end{theorem}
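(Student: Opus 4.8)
The plan is to derive the statement from Propositions~\ref{prop:nilp_super}, \ref{prop:sumsnil} and~\ref{prop:nilp_series}, together with a one‑step descent along the flag witnessing $\mathcal{E}$-supersolvability. Since the $\mathcal{E}$-supersolvable nilpotent series stabilises at $r$, we have $\mathcal{N}^{r+1}(\mathcal{E})=\mathcal{N}^r(\mathcal{E})$, and Proposition~\ref{prop:nilp_super} already tells us that $\mathcal{N}^r(\mathcal{E})$ is an $\mathcal{E}$-supersolvable nilpotent ideal; hence it only remains to prove that it contains every $\mathcal{E}$-supersolvable nilpotent ideal.

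Let $I$ be such an ideal and set $J:=I+\mathcal{N}^r(\mathcal{E})$. By Proposition~\ref{prop:sumsnil}, $J$ is nilpotent, and, being a sum of two $\mathcal{E}$-supersolvable ideals, $J$ is $\mathcal{E}$-supersolvable; clearly $\mathcal{N}^r(\mathcal{E})\subseteq J$. It therefore suffices to show $J=\mathcal{N}^r(\mathcal{E})$, since then $I\subseteq\mathcal{N}^r(\mathcal{E})$. Suppose, towards a contradiction, that $\mathcal{N}^r(\mathcal{E})\subsetneq J$. Then $J/\mathcal{N}^r(\mathcal{E})$ is a nonzero $(\mathcal{E}/\mathcal{N}^r(\mathcal{E}))$-supersolvable ideal, so the first nontrivial term of the flag realising this supersolvability is a one-dimensional ideal $\spa\{\overline w\}$ of $\mathcal{E}/\mathcal{N}^r(\mathcal{E})$ with $w\in J\setminus\mathcal{N}^r(\mathcal{E})$. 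As $\spa\{\overline w\}$ is a one-dimensional subalgebra of the nilpotent algebra $J/\mathcal{N}^r(\mathcal{E})$, its product vanishes, so $\overline{w}^{\,2}=0$, i.e.\ $\spa\{\overline w\}$ is a one-dimensional \emph{abelian} ideal of $\mathcal{E}/\mathcal{N}^r(\mathcal{E})$ and $w^{2}\in\mathcal{N}^r(\mathcal{E})$.

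The next move is to normalise the support of $w$. Applying Proposition~\ref{prop:nilp_series} to the nilpotent ideal $J$ --- whose largest term of the $\mathcal{E}$-supersolvable nilpotent series is $\mathcal{N}^r(\mathcal{E})$ itself --- gives $\pi_{\mathcal{N}^r(\mathcal{E})}(w)\in\mathcal{N}^r(\mathcal{E})$. Write $w=w'+v$ with $v=\pi_{\mathcal{N}^r(\mathcal{E})}(w)\in\mathcal{N}^r(\mathcal{E})$ and $\supp(w')\cap\supp(\mathcal{N}^r(\mathcal{E}))=\emptyset$; then $\overline{w'}=\overline{w}\neq0$, and, since the supports of $w'$ and $v$ are disjoint, $(w')^{2}=w^{2}-v^{2}\in\mathcal{N}^r(\mathcal{E})$. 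Hence the preimage $\spa\{w'\}+\mathcal{N}^r(\mathcal{E})$ of $\spa\{\overline{w'}\}$ is a $(1+\dim\mathcal{N}^r(\mathcal{E}))$-dimensional ideal of $\mathcal{E}$ of exactly the form used to build $\mathcal{N}^{r+1}(\mathcal{E})$ in~\eqref{eq:def_ni}; consequently $\spa\{\overline{w'}\}\subseteq\sum_{j\in\varLambda_{r+1}}\overline{I_{r+1,j}}$, and this last sum is contained in $\mathcal{N}^{r+1}(\mathcal{E})/\mathcal{N}^r(\mathcal{E})$ --- directly for the indices in $\varGamma_{r+1}$, and via Lemma~\ref{lem:lem1} for those in $\overline{\varGamma_{r+1}}$, where $\overline{I_{r+1,j}}$ is the derived subalgebra of $\mathcal{E}_{r+1,j}\in\mathcal{T}_\mathbb{K}$ and so lies in $\nil(\mathcal{E}_{r+1,j})$. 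Therefore $\overline{w'}\in\mathcal{N}^{r+1}(\mathcal{E})/\mathcal{N}^r(\mathcal{E})=0$, contradicting $\overline{w'}\neq0$; this forces $J=\mathcal{N}^r(\mathcal{E})$, as desired.

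The step I expect to be the main obstacle is precisely this last one: manoeuvring an arbitrary $\mathcal{E}$-supersolvable nilpotent ideal into the inductive construction. One must absorb $\mathcal{N}^r(\mathcal{E})$ without destroying nilpotency (Proposition~\ref{prop:sumsnil}), peel off a one-dimensional ideal of the quotient from the supersolvable flag, and then shift its support (Proposition~\ref{prop:nilp_series}) so that it genuinely appears among the generators of $\mathcal{N}^{r+1}(\mathcal{E})$; the stabilisation $\mathcal{N}^{r+1}(\mathcal{E})=\mathcal{N}^r(\mathcal{E})$ then produces the contradiction. The remaining verifications --- that $J$ and the various preimages are ideals of the expected dimension, and that disjoint supports annihilate cross terms in an evolution algebra --- are routine and I would leave them to the reader.
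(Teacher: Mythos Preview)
Your proof is correct and follows essentially the same approach as the paper. The paper separates the argument into two steps --- first proving maximality of $\mathcal{N}^r(\mathcal{E})$ among $\mathcal{E}$-supersolvable nilpotent ideals via Proposition~\ref{prop:nilp_series}, then deducing uniqueness via Proposition~\ref{prop:sumsnil} --- whereas you merge these by immediately forming $J=I+\mathcal{N}^r(\mathcal{E})$; the paper is also terser where you spell out the support normalisation and the inclusion $\overline{I_{r+1,j}}\subseteq\nil(\mathcal{E}_{r+1,j})$ via Lemma~\ref{lem:lem1}, but the underlying mechanism is identical.
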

\begin{proof}
	Notice that $\mathcal{N}^r(\mathcal{E})$ is an $\mathcal{E}$-supersolvable nilpotent ideal by Proposition \ref{prop:nilp_super}. First, we show that $\mathcal{N}^r(\mathcal{E})$ is a maximal $\mathcal{E}$-supersolvable nilpotent ideal of $\mathcal{E}$. Assume, for the sake of contradiction, that there exists an $\mathcal{E}$-supersolvable nilpotent ideal $I$ such that $\mathcal{N}^r(\mathcal{E})\subsetneq I$. Then, as $\mathcal{N}^r(\mathcal{E})$ and $I$ are $\mathcal{E}$-supersolvable, there clearly exists a $(1+\dim{\mathcal{N}^r(\mathcal{E})})$-dimensional  nilpotent ideal $J$ such that $\mathcal{N}^r(\mathcal{E})\subsetneq J\subseteq I$. As a consequence of Proposition~\ref{prop:nilp_series}, $J$ can be written as $\spa\{w\}+\mathcal{N}^r(\mathcal{E})$ with $w^2\in\mathcal{N}^r(\mathcal{E})$ and $\supp(w)\cap\supp(\mathcal{N}^r(\mathcal{E}))=\emptyset$, a contradiction with the fact that the series stabilises for $r$.
	
	Now, assume that another maximal $\mathcal{E}$-supersolvable nilpotent ideal exists, say $I$, different from $\mathcal{N}^r(\mathcal{E})$. By Proposition \ref{prop:sumsnil}, the sum $I + \mathcal{N}^r(\mathcal{E})$ is also an $\mathcal{E}$-supersolvable nilpotent ideal, which strictly contains $\mathcal{N}^r(\mathcal{E})$. However, this contradicts the maximality of $\mathcal{N}^r(\mathcal{E})$ previously shown, completing the proof.
\end{proof}

In view of the previous result, we introduce the following definition.

\begin{definition}
The \textit{supersolvable nilradical} of an evolution algebra $\mathcal{E}$, denoted by $\snil(\mathcal{E})$, is defined as the largest $\mathcal{E}$-supersolvable nilpotent ideal of $\mathcal{E}$. Equivalently, it holds that $\snil(\mathcal{E})=\mathcal{N}^r(\mathcal{E})$, where $r$ is the number of steps until the $\mathcal{E}$-supersolvable nilpotent series of $\mathcal{E}$ stabilises.
\end{definition}

For the reader's convenience, we now present an example of how to compute the supersolvable nilradical of an evolution algebra.

\begin{example}\label{ex:ejemplo}
	Let $\mathcal{E}$ be an evolution algebra with natural basis $\{e_1,e_2,e_3,e_4,e_5,e_6,e_7,e_8\}$ and structure matrix
		\[
		\newcommand*{\temp}{\multicolumn{1}{|r}{0}}
		\newcommand*{\temps}{\multicolumn{1}{|r}{1}}
		\newcommand*{\tempz}{\multicolumn{1}{|r}{-1}}
		\left(\begin{array}{rrrrrrrr}
		1&1&1&\temp&0&0&0&0\\
		1&1&1&\temp&0&0&0&0\\
		-2&-2&-2&\temp&0&0&0&0\\\cline{1-4}
		0&0&0&\temp&\temp&0&0&0\\\cline{1-6}
		1&-1&0&0&\temps&1&\temp&0\\
		4&0&2&0&\tempz&-1&\temp&0\\\cline{1-8}
		1&1&1&0&0&0&\temps&1\\
		0&0&0&0&0&0&\tempz&-1
		\end{array}
		\right)
		\]
	The one-dimensional abelian ideals of $\mathcal{E}$ are $I_{1,1}=\spa\{e_4\}$ and $I_{1,2}=\spa\{e_1+e_2+e_3\}$. In fact, $I_{1,1}=\ann(\mathcal{E})$ and $I_{1,2}$ is the derived subalgebra of the basic ideal $\mathcal{E}_{1,2}=\spa\{e_1,e_2,e_3\}=\mathcal{E}_3(1,1,-2)$. Consequently, following \eqref{eq:def_n1}, we have that 
	\[\mathcal{N}^1(\mathcal{E})=\nil(\mathcal{E}_{1,2})+I_{1,1}=\spa\{e_2-e_1,2e_1+e_3,e_4\}.\]
	Next, following condition \eqref{eq:cond_soporte}, we consider the ideals $I_{2,1}=\spa\{e_5+e_6\}+\mathcal{N}^1(\mathcal{E})$ and $I_{2,2}=\spa\{e_7+e_8\}+\mathcal{N}^1(\mathcal{E})$. Notice that $\overline{I_{2,1}}=\spa\{\overline{e_5}+\overline{e_6}\}$ and $\overline{I_{2,2}}=\spa\{\overline{e_7}+\overline{e_8}\}$ are one-dimensional abelian ideals of $\mathcal{E}/\mathcal{N}^1(\mathcal{E})$, which are the derived subalgebras of the basic ideals $\mathcal{E}_{2,1}=\spa\{\overline{e_5},\overline{e_6}\}\cong\mathcal{E}_2(1,-1)$ and $\mathcal{E}_{2,2}=\spa\{\overline{e_7},\overline{e_8}\}\cong\mathcal{E}_2(1,-1)$, respectively. Consequently, following \eqref{eq:def_ni}, we have that 
	\[\mathcal{N}^2(\mathcal{E})/\mathcal{N}^1(\mathcal{E})=\nil(\mathcal{E}_{2,1})+\nil(\mathcal{E}_{2,2})=\spa\{\overline{e_5}+\overline{e_6},\overline{e_7}+\overline{e_8}\},\]
	which implies that $\mathcal{N}^2(\mathcal{E})=\spa\{e_2-e_1,2e_1+e_3,e_4,e_5+e_6,e_7+e_8\}$. Moreover, as $\supp(\mathcal{N}^2(\mathcal{E}))=\{1,\dots,8\}$, the series clearly stabilises in this second step. Hence,
	\[\snil(\mathcal{E})=\mathcal{N}^2(\mathcal{E})=\spa\{e_2-e_1,2e_1+e_3,e_4,e_5+e_6,e_7+e_8\}.\]
\end{example}

Notice that the construction of the supersolvable nilradical is clearly inspired by Theorem~\ref{th:nilrad_dim1}. Consequently, unlike the basic nilradical, if $\mathcal{E}\in\mathcal{T}_\mathbb{K}$, then $\snil(\mathcal{E})=\nil(\mathcal{E})$. In what follows, we characterise the nilradical when the supersolvable nilradical is a maximal nilpotent ideal.

\begin{corollary}\label{cor:snil_max}
	Let $\mathcal{E}$ be an evolution algebra. Then, $\nil(\mathcal{E})=\snil(\mathcal{E})$ if and only if $\snil(\mathcal{E})$ is a maximal nilpotent ideal.
\end{corollary}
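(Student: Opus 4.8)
The plan is to prove the two implications separately, with the forward direction being essentially immediate and the reverse direction carrying the real content.

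First I would dispatch the direction $\nil(\mathcal{E})=\snil(\mathcal{E})\Longrightarrow\snil(\mathcal{E})$ is a maximal nilpotent ideal. If $\nil(\mathcal{E})$ exists at all (and by hypothesis it does, being equal to $\snil(\mathcal{E})$), then by the very definition recalled in the excerpt $\nil(\mathcal{E})$ is the unique maximal nilpotent ideal; in particular it \emph{is} a maximal nilpotent ideal, and since it coincides with $\snil(\mathcal{E})$ we are done. This is a one-line argument.

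For the converse, assume $\snil(\mathcal{E})$ is a maximal nilpotent ideal; I must show $\nil(\mathcal{E})$ exists and equals $\snil(\mathcal{E})$, i.e. that $\snil(\mathcal{E})$ is the \emph{unique} maximal nilpotent ideal. Suppose $\mathcal{M}$ is any maximal nilpotent ideal of $\mathcal{E}$. By Proposition~\ref{prop:sumsnil}, $\mathcal{M}+\snil(\mathcal{E})$ is again a nilpotent ideal, and it contains both $\mathcal{M}$ and $\snil(\mathcal{E})$. Since $\mathcal{M}$ is maximal nilpotent, $\mathcal{M}+\snil(\mathcal{E})=\mathcal{M}$, so $\snil(\mathcal{E})\subseteq\mathcal{M}$; and since $\snil(\mathcal{E})$ is also maximal nilpotent by hypothesis, the inclusion forces $\mathcal{M}=\snil(\mathcal{E})$. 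Hence every maximal nilpotent ideal equals $\snil(\mathcal{E})$, so there is a unique one, $\nil(\mathcal{E})$ is defined, and $\nil(\mathcal{E})=\snil(\mathcal{E})$.

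The only subtlety — and thus the "main obstacle", though it is mild — is making sure the logical bookkeeping around the phrase "the nilradical exists" is handled cleanly: $\nil(\mathcal{E})$ is only defined when the maximal nilpotent ideal is unique, so in the converse direction one genuinely needs the uniqueness argument above before one is even entitled to write $\nil(\mathcal{E})$. The engine driving everything is Proposition~\ref{prop:sumsnil} (the sum of a nilpotent ideal with $\mathcal{N}^r(\mathcal{E})$ is nilpotent), which upgrades $\snil(\mathcal{E})$ from "a" maximal nilpotent ideal to "the" maximal nilpotent ideal as soon as it is maximal; no further computation is required.
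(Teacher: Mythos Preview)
Your proof is correct and follows essentially the same approach as the paper: the forward direction is immediate from the definition of $\nil(\mathcal{E})$, and the converse uses Proposition~\ref{prop:sumsnil} to conclude that any other maximal nilpotent ideal would have to coincide with $\snil(\mathcal{E})$. The paper phrases the sufficiency as a short contradiction (another maximal nilpotent ideal $I$ would make $\snil(\mathcal{E})+I$ a strictly larger nilpotent ideal), whereas you argue directly, but the content is identical.
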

\begin{proof}
	The necessity is straightforward.
	For the sufficiency, observe that if $\snil(\mathcal{E})$ is a maximal nilpotent ideal, it must be unique. Indeed, if there existed another maximal nilpotent ideal $I$, then $\snil(\mathcal{E})+I$ would also be nilpotent by Proposition~\ref{prop:sumsnil}, contradicting the maximality of $\snil(\mathcal{E})$.
\end{proof}

Finally, the following example shows that evolution algebras whose nilradicals are well-defined exist, but do not coincide with their supersolvable nilradicals.

\begin{example}
	Let $\mathcal{E}$ be the evolution algebra with natural basis $\{e_1,e_2,e_3,e_4\}$ and product given by $e_1^2=-e_2^2=e_1+e_2+e_3+e_4$ and $e_3^2=-e_4^2=e_1+e_2$. It is easy to check that $\mathcal{N}=\spa\{e_1+e_2,e_3,e_4\}$ is the unique maximal nilpotent ideal of $\mathcal{E}$, and consequently, $\nil(\mathcal{E})=\mathcal{N}$. However, $\mathcal{E}$ has no one-dimensional abelian ideals, so $\snil(\mathcal{E})=0$.
\end{example}

\section{The Frattini subalgebra and ideal in evolution algebras via the supersolvable nilradical}\label{sec:4}

Regarding its counterpart in both group theory and Lie algebras, the \textit{abelian socle} of an evolution algebra $\mathcal{E}$, $\asoc(\mathcal{E})$, is defined as the sum of all minimal abelian ideals of $\mathcal{E}$. Nevertheless, during this section we will mainly work with the sum of all one-dimensional abelian ideals of $\mathcal{E}$, which will be denoted by $\asoc_1(\mathcal{E})$. In fact, as a consequence of Proposition~\ref{prop:ab_ideals}, given an evolution algebra $\mathcal{E}$, it holds that
\begin{align}\label{eq:def_asoc1}
	\asoc_1(\mathcal{E})=\sum_{I\subset\mathcal{E}\text{ basic ideal, }I\in\mathcal{T}_\mathbb{K}}I^2+\ann(\mathcal{E}).
\end{align}
Clearly, $\asoc_1(\mathcal{E})$ is $\mathcal{E}$-supersolvable, and $\snil(\mathcal{E})\cap \asoc(\mathcal{E})=\asoc_1(\mathcal{E})$. Moreover, if $\mathcal{E}$ is supersolvable (if $\mathcal{E}\in\mathcal{T}_\mathbb{K}$, for instance) then, we obtain that $\asoc_1(\mathcal{E})=\asoc(\mathcal{E})$ since every ideal in this case is $\mathcal{E}$-supersolvable.

In the next result, we characterise the Frattini subalgebra and the Frattini ideal in $\mathcal{T}_\mathbb{K}$.

\begin{theorem}\label{th:nilrad_abelian}
	Let $\mathcal{E}\in\mathcal{T}_\mathbb{K}$ with natural basis $\{e_1,\dots,e_n\}$. Then, the following assertions are equivalent:
	\begin{enumerate}[\rm (i)]
		\item the annihilator of $\mathcal{E}$ is of codimension two;
		\item $\mathcal{E}$ is isomorphic to $\mathcal{E}_2(1,-1,0,\dots,0)$;
		\item $\mathcal{E}$ splits over its abelian socle;
		\item $F(\mathcal{E})=\phi(\mathcal{E})=0$; and 
		\item $\nil(\mathcal{E})$ is abelian.
	\end{enumerate}
	Otherwise, $F(\mathcal{E})=\phi(\mathcal{E})=\nil(\mathcal{E})^2=\mathcal{E}^2$.
\end{theorem}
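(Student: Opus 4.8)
The plan is to reduce the whole statement to linear algebra over the one‑dimensional ideal $\mathcal{E}^2$. By Remark~\ref{rem:T_parameters} I may assume $\mathcal{E}=\mathcal{E}_k(\lambda_1,\dots,\lambda_n)$ with $k\geq2$ and $\lambda_1,\dots,\lambda_k\neq0$; put $w\coloneqq e_1+\dots+e_k$, so $\mathcal{E}^2=\spa\{w\}$ is one‑dimensional with $(\mathcal{E}^2)^2=0$, $\mathcal{E}/\mathcal{E}^2$ is abelian, hence $\mathcal{E}$ is supersolvable and every maximal subalgebra of $\mathcal{E}$ has codimension one; also $\ann(\mathcal{E})=\spa\{e_i:\lambda_i=0\}$. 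Since $F(\mathcal{E})\subseteq\mathcal{E}^2$ and $\mathcal{E}^2$ is a one‑dimensional ideal, $F(\mathcal{E})\in\{0,\mathcal{E}^2\}$ and $\phi(\mathcal{E})=F(\mathcal{E})$ in both cases, so the theorem boils down to deciding which value occurs. The key observation is that every product of $\mathcal{E}$ has the form $uv=B(u,v)\,w$ for a well‑defined symmetric bilinear form $B$ on $\mathcal{E}$ whose radical is exactly $\ann(\mathcal{E})$; consequently, for a codimension‑one subspace $M$ with $\mathcal{E}=M\oplus\mathcal{E}^2$ one has $M^2\subseteq M\cap\mathcal{E}^2=0$ precisely when $B|_{M\times M}=0$, such an $M$ is then an abelian (hence genuine) maximal subalgebra avoiding $\mathcal{E}^2$, and conversely every maximal subalgebra missing $\mathcal{E}^2$ is of this shape. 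Therefore $F(\mathcal{E})=0$ if and only if $\mathcal{E}$ admits a codimension‑one subspace $M$, totally isotropic for $B$, with $\mathcal{E}=M\oplus\mathcal{E}^2$.

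With this in hand I would first dispatch (i)$\Leftrightarrow$(ii) and (i)$\Leftrightarrow$(v). For the first, $\codim\ann(\mathcal{E})=\abs{\{i:\lambda_i\neq0\}}\geq k$ equals $2$ exactly when $k=2$ and $\lambda_j=0$ for all $j>2$; then $\lambda_1+\lambda_2=0$ and rescaling the basis by $\lambda_1^{-1}$ gives $\mathcal{E}\cong\mathcal{E}_2(1,-1,0,\dots,0)$, the converse being clear since $\codim\ann$ is an isomorphism invariant. For the second, Theorem~\ref{th:nilrad_dim1} gives $\nil(\mathcal{E})=\spa\{\lambda_ie_1-\lambda_1e_i:2\leq i\leq k\}+\spa\{e_{k+1},\dots,e_n\}$; all products among these spanning vectors lie in $\spa\{w\}$, namely $(\lambda_ie_1-\lambda_1e_i)(\lambda_je_1-\lambda_1e_j)=\lambda_1\lambda_i\lambda_j\,w$ for $i\neq j$, $(\lambda_ie_1-\lambda_1e_i)^2=\lambda_1\lambda_i(\lambda_i+\lambda_1)\,w$, $e_j^2=\lambda_j\,w$, and the remaining products vanish; hence $\nil(\mathcal{E})^2=0$ exactly when $k=2$ and $\lambda_j=0$ for all $j>2$, i.e.\ exactly when (i) holds, and otherwise $\nil(\mathcal{E})^2$ is a nonzero subspace of $\mathcal{E}^2$, so $\nil(\mathcal{E})^2=\mathcal{E}^2$.

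The core of the argument, which I expect to be the main obstacle, is (i)$\Leftrightarrow$(iv) together with the ``Otherwise'' clause. If (i) holds I would take $\mathcal{E}=\mathcal{E}_2(1,-1,0,\dots,0)$ by (ii) and exhibit the maximal subalgebra $M\coloneqq\spa\{e_1-e_2\}\oplus\ann(\mathcal{E})=\spa\{e_1-e_2,e_3,\dots,e_n\}$, which is abelian, of codimension one, and meets $\mathcal{E}^2$ trivially (this is where $\operatorname{char}\mathbb{K}\neq2$ is used), whence $F(\mathcal{E})\subseteq M\cap\mathcal{E}^2=0$ and $\phi(\mathcal{E})=0$. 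If (i) fails, set $d\coloneqq\codim\ann(\mathcal{E})\geq3$ and suppose, for a contradiction, that $F(\mathcal{E})=0$; by the reformulation above there is a codimension‑one subspace $M$, totally isotropic for $B$, with $\mathcal{E}=M\oplus\mathcal{E}^2$. Passing to $\overline{\mathcal{E}}\coloneqq\mathcal{E}/\ann(\mathcal{E})$, on which $B$ induces a non‑degenerate symmetric bilinear form $\overline{B}$ of rank $d$, the image $\overline{M}$ of $M$ stays totally isotropic and satisfies $\overline{\mathcal{E}}=\overline{M}+\spa\{\overline{w}\}$, so $\dim\overline{M}\geq d-1$; but a totally isotropic subspace of a non‑degenerate symmetric bilinear form in dimension $d$ has dimension at most $d/2$, forcing $d\leq2$, a contradiction. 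Hence $F(\mathcal{E})=\mathcal{E}^2$, so $\phi(\mathcal{E})=\mathcal{E}^2$ (as $\mathcal{E}^2$ is an ideal contained in $F(\mathcal{E})$), and $\nil(\mathcal{E})^2=\mathcal{E}^2$ by the previous paragraph --- this is the ``Otherwise'' assertion, and it also shows (iv) fails exactly when (i) does.

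It remains to link (iii) with (iv). From \eqref{eq:def_asoc1}, the equality $\asoc_1(\mathcal{E})=\asoc(\mathcal{E})$ valid for supersolvable $\mathcal{E}$, and the fact that every basic ideal of $\mathcal{E}$ lying in $\mathcal{T}_\mathbb{K}$ has derived subalgebra $\mathcal{E}^2$, one gets $\asoc(\mathcal{E})=\mathcal{E}^2+\ann(\mathcal{E})$; this sum is direct (otherwise $\mathcal{E}^2\subseteq\ann(\mathcal{E})$ would give $\mathcal{E}^{\langle3\rangle}=0$) and $\asoc(\mathcal{E})$ is an abelian ideal. Then (iv)$\Rightarrow$(iii) follows from Lemma~\ref{lem:frat2} applied to $I=\asoc(\mathcal{E})$, since $\phi(\mathcal{E})\cap\asoc(\mathcal{E})=0$; and for (iii)$\Rightarrow$(iv), if $\mathcal{E}=U\oplus\asoc(\mathcal{E})=U\oplus\mathcal{E}^2\oplus\ann(\mathcal{E})$ with $U$ a subalgebra, then $U\oplus\ann(\mathcal{E})$ is again a codimension‑one subalgebra (using $U\,\ann(\mathcal{E})=\ann(\mathcal{E})^2=0$) meeting $\mathcal{E}^2$ trivially, so $F(\mathcal{E})=0$. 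Assembling, (i)$\Leftrightarrow$(ii)$\Leftrightarrow$(v) and (i)$\Leftrightarrow$(iv)$\Leftrightarrow$(iii), so all five conditions are equivalent and the final clause is proved. The only non‑routine ingredient is the dimension bound on totally isotropic subspaces used in the third paragraph; that is the single point where the hypothesis $\codim\ann(\mathcal{E})\geq3$ genuinely enters, everything else being direct computation and elementary linear algebra.
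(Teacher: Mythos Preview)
Your proof is correct, and the route is genuinely different from the paper's at the crucial step. The paper establishes the cycle (i)$\Rightarrow$(ii)$\Rightarrow$(iii)$\Rightarrow$(iv)$\Rightarrow$(v)$\Rightarrow$(i); in particular, to get the ``Otherwise'' clause and the implication (iv)$\Rightarrow$(v) it invokes the general Frattini machinery for nilpotent algebras, namely $\nil(\mathcal{E})^2=F(\nil(\mathcal{E}))$ (\cite[Theorem~6]{T_71_frat}) together with Lemma~\ref{lem:frat1}, to force $\nil(\mathcal{E})^2\subseteq F(\mathcal{E})$. You bypass this entirely by observing that the whole multiplication of $\mathcal{E}$ is encoded in a single symmetric bilinear form $B$ with radical $\ann(\mathcal{E})$, so that a maximal subalgebra missing $\mathcal{E}^2$ is precisely a codimension-one totally $B$-isotropic subspace complementary to $\mathcal{E}^2$; the standard bound $\dim W\leq\tfrac{d}{2}$ for totally isotropic subspaces of a non-degenerate form then rules this out when $d=\codim\ann(\mathcal{E})\geq3$. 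This is more elementary and self-contained --- it uses nothing about Frattini theory beyond $F(\mathcal{E})\subseteq\mathcal{E}^2$ --- whereas the paper's argument has the advantage of fitting into the general pattern (Lemma~\ref{lem:frat1}) that will be reused in Theorem~\ref{th:phi-free1}. Your handling of (iii)$\Leftrightarrow$(iv) is likewise more direct: the paper decomposes $\asoc(\mathcal{E})=I_1\oplus\dots\oplus I_m$ and intersects the corresponding family of maximal subalgebras, while you exhibit a single maximal subalgebra $U\oplus\ann(\mathcal{E})$ avoiding $\mathcal{E}^2$.
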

\begin{proof}
	
	(i)$\implies$(ii). Remark \ref{rem:T_parameters} implies that $\mathcal{E}$ is isomorphic to $\mathcal{E}_2(\lambda,-\lambda,0,\dots,0)$ for some $\lambda\in\mathbb{K}^*$. Thus, by performing the natural basis transformation $f_1=\frac{1}{\lambda}e_1$,  $f_2=\frac{1}{\lambda}e_2$ and $f_i=e_i$ for all $i=3,\dots,n$, the result follows.
	
	(ii)$\implies$(iii). By \eqref{eq:def_asoc1} and the fact that $\mathcal{E}$ is supersolvable, we have that $\spa\{e_1-e_2\}$ clearly complements 
	\begin{align*}
		\asoc\big(\mathcal{E}_2(1,-1,0,\dots,0)\big)=\asoc_1\big(\mathcal{E}_2(1,-1,0,\dots,0)\big)=\spa\{e_1+e_2,e_3,\dots,e_n\}.
	\end{align*}	
	
	(iii)$\implies$(iv). If $\mathcal{E}$ splits over its abelian socle, there exists a subalgebra $U\subset\mathcal{E}$ such that $\mathcal{E}=\asoc(\mathcal{E})\oplus U$. Since $\mathcal{E}$ is supersolvable, $\asoc(\mathcal{E})$ can be written as the direct sum of some one-dimensional abelian ideals, say $\asoc(\mathcal{E})=I_1\oplus\dots\oplus I_m$. Then, the subalgebra $M_i=(I_1\oplus\dots\oplus\widehat{I}_i\oplus\dots\oplus I_m)+U$ is a maximal subalgebra of $\mathcal{E}$  for all $i=1,\dots,m$. Since $F(\mathcal{E})\subseteq\mathcal{E}^2$ in general, we have that 
	\[\phi(\mathcal{E})\subseteq F(\mathcal{E})\subseteq\left(\cap_{i=1}^mM_i\right)\cap\mathcal{E}^2=U\cap\mathcal{E}^2=0,\]
	where the last equality follows from the fact that $\mathcal{E}^2$ is a one-dimensional abelian ideal and, consequently, is contained in $\asoc(\mathcal{E})$.
	
	(iv)$\implies$(v). By Theorem \ref{th:nilrad_dim1}, the square of the nilradical, $\nil(\mathcal{E})^2$, of an evolution algebra $\mathcal{E}\in\mathcal{T}_\mathbb{K}$ is either $0$ or $\mathcal{E}^2$. Since $\nil(\mathcal{E})^2$ is an ideal and $\nil(\mathcal{E})$ is nilpotent, Lemma~\ref{lem:frat1} and \cite[Theorem 6]{T_71_frat} imply that $\nil(\mathcal{E})^2=F(\nil(\mathcal{E}))\subset F(\mathcal{E})=0$.
	
	(v)$\implies$(i). For the sake of contradiction, consider an evolution algebra $\mathcal{E}\in\mathcal{T}_\mathbb{K}$ such that $\codim\big(\ann(\mathcal{E})\big)>2$, that is, an evolution algebra $\mathcal{E}=\mathcal{E}_k(\lambda_1,\dots,\lambda_n)$ with $k\geq3$ and $\lambda_1,\dots,\lambda_k\neq0$. Then, by Theorem \ref{th:nilrad_dim1}, we have $\lambda_2e_1-\lambda_1e_2,\lambda_3e_1-\lambda_1e_3\in\nil(\mathcal{E})$. However, $(\lambda_2e_1-\lambda_1e_2)(\lambda_3e_1-\lambda_1e_3)=\lambda_2\lambda_3e_1^2\neq0$, which leads a contradiction. 
	Furthermore, in this case, we have that 
	$\mathcal{E}^2=\nil(\mathcal{E})^2=F(\nil(\mathcal{E}))\subset F(\mathcal{E})\subset\mathcal{E}^2$.
	Since $\mathcal{E}^2$ is an ideal, then $F(\mathcal{E})=\phi(\mathcal{E})=\mathcal{E}^2$, and the final assertion follows.
\end{proof}

Next, our goal is to develop a necessary condition for an evolution algebra to be $\phi$-free, using the basic and supersolvable nilradicals. However, we should be aware of one of the main weaknesses of this supersolvable nilradical compared to Lie algebras: in general, its square is not guaranteed to be an ideal, as shown in the following example.

\begin{example}
	Let $\mathcal{E}$ be the evolution algebra with natural basis $\{e_1,e_2,e_3,e_4,e_5\}$ and product given by $e_1^2=-e_2^2=e_1+e_2+e_3$, $e_3^2=0$, $e_4^2=e_1+e_2+e_3+e_4+e_5$ and $e_5^2=-e_3-e_4-e_5$. Actually, its supersolvable nilradical is $\snil(\mathcal{E})=\spa\{e_1+e_2,e_3,e_4+e_5\}$. However, its square $\snil(\mathcal{E})^2=\spa\{e_1+e_2\}$ is not an ideal.
\end{example}

\begin{theorem}\label{th:phi-free1}
	Let $\mathcal{E}$ be an evolution algebra. If $\mathcal{E}$ is $\phi$-free, then the following hold:
	\begin{enumerate}[\rm (i)]
		\item $\bnil(\mathcal{E})=\ann(\mathcal{E})$; and
		\item if $\snil(\mathcal{E})^2$ is an ideal, then $\snil(\mathcal{E})=\asoc_1(\mathcal{E})$. 
	\end{enumerate}
\end{theorem}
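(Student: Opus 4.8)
The plan is to prove each part by producing, from a failure of the stated conclusion, a nonzero ideal of $\mathcal{E}$ lying in $F(\mathcal{E})$---hence in $\phi(\mathcal{E})$---contradicting $\phi(\mathcal{E})=0$. The main tools are Lemma~\ref{lem:frat1}, the fact (\cite[Theorem~6]{T_71_frat}) that $F(U)=U^2$ for a nilpotent algebra $U$, and Lemma~\ref{lem:frat2}.

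For~(i), observe that $\ann(\mathcal{E})$ is a basic nilpotent ideal, so $\ann(\mathcal{E})\subseteq\bnil(\mathcal{E})=\ann^r(\mathcal{E})$. If this inclusion were strict then $r\geq 2$, so $\ann^1(\mathcal{E})\subsetneq\ann^2(\mathcal{E})$ and there is a basis vector $e_p$ with $0\neq e_p^2\in\ann(\mathcal{E})$. Then $\spa\{e_p^2\}$ is a one-dimensional ideal of $\mathcal{E}$, and $U\coloneqq\spa\{e_p,e_p^2\}$ is a two-dimensional nilpotent subalgebra with $U^2=\spa\{e_p^2\}$; hence $F(U)=\spa\{e_p^2\}$ by \cite[Theorem~6]{T_71_frat}, and Lemma~\ref{lem:frat1} gives $\spa\{e_p^2\}\subseteq F(\mathcal{E})$. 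Being an ideal, $\spa\{e_p^2\}\subseteq\phi(\mathcal{E})=0$, a contradiction. Thus $\bnil(\mathcal{E})=\ann(\mathcal{E})$.

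For~(ii), since $\snil(\mathcal{E})$ is nilpotent we have $F(\snil(\mathcal{E}))=\snil(\mathcal{E})^2$; if $\snil(\mathcal{E})^2$ is an ideal of $\mathcal{E}$, Lemma~\ref{lem:frat1} places it in $F(\mathcal{E})$, hence in $\phi(\mathcal{E})=0$, so $\snil(\mathcal{E})$ is abelian. The inclusion $\asoc_1(\mathcal{E})\subseteq\snil(\mathcal{E})$ always holds, since $\asoc_1(\mathcal{E})$ is abelian (a sum of one-dimensional abelian ideals, any two of which have trivial product) and $\mathcal{E}$-supersolvable. Assume for contradiction that $\asoc_1(\mathcal{E})\subsetneq\snil(\mathcal{E})$. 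Since $\snil(\mathcal{E})/\asoc_1(\mathcal{E})$ is $(\mathcal{E}/\asoc_1(\mathcal{E}))$-supersolvable, it contains a one-dimensional ideal, whose preimage $L$ is an ideal of $\mathcal{E}$ with $\asoc_1(\mathcal{E})\subsetneq L\subseteq\snil(\mathcal{E})$, $\dim L=\dim\asoc_1(\mathcal{E})+1$, and $L$ abelian. Applying Lemma~\ref{lem:frat2} to the abelian ideal $\asoc_1(\mathcal{E})$ (which meets $\phi(\mathcal{E})=0$ trivially) gives $\mathcal{E}=V\oplus\asoc_1(\mathcal{E})$ for a subalgebra $V$; by the modular law $L=(L\cap V)\oplus\asoc_1(\mathcal{E})$ with $\dim(L\cap V)=1$. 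Crucially, $L\cap V$ is an ideal of $\mathcal{E}$: for $u=u_V+u_A$ with $u_V\in V$ and $u_A\in\asoc_1(\mathcal{E})\subseteq L$, and for $x\in L\cap V\subseteq L$, we get $u_Ax\in L^2=0$ (as $L$ is abelian), whence $ux=u_Vx\in V$; also $ux\in\mathcal{E}L\subseteq L$, so $ux\in L\cap V$, and $xu\in L\cap V$ by commutativity. Then $L\cap V$ is a nonzero one-dimensional abelian ideal of $\mathcal{E}$, so $L\cap V\subseteq\asoc_1(\mathcal{E})$, contradicting $L\cap V\cap\asoc_1(\mathcal{E})=0$. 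Hence $\snil(\mathcal{E})=\asoc_1(\mathcal{E})$.

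The step I expect to be the main obstacle is verifying that $L\cap V$ is an ideal of $\mathcal{E}$ in~(ii): this is where the abelianness of $L$---which rests on the hypothesis that $\snil(\mathcal{E})^2$ is an ideal together with $\phi$-freeness, via the argument of the previous paragraph---is used essentially, and it is precisely what forces the contradiction through the definition of $\asoc_1(\mathcal{E})$ as the sum of all one-dimensional abelian ideals. A secondary point requiring care is the existence of the intermediate ideal $L$ with $\dim L=\dim\asoc_1(\mathcal{E})+1$ sitting between $\asoc_1(\mathcal{E})$ and $\snil(\mathcal{E})$, which relies on the fact that $\mathcal{E}$-supersolvability passes to quotients.
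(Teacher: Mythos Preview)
Your proof is correct and follows essentially the same strategy as the paper's: in both parts you use Lemma~\ref{lem:frat1} together with \cite[Theorem~6]{T_71_frat} to force a nilpotent piece to be abelian, and in~(ii) you combine this with the splitting from Lemma~\ref{lem:frat2} and the $\mathcal{E}$-supersolvability of $\snil(\mathcal{E})$ to locate a forbidden one-dimensional abelian ideal in the complement of $\asoc_1(\mathcal{E})$.

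The only differences are organisational. For~(i), the paper applies Lemma~\ref{lem:frat1} with $U=\bnil(\mathcal{E})$ itself, after first observing that $\bnil(\mathcal{E})^2$ is an ideal of $\mathcal{E}$ (because $\mathcal{E}\cdot\bnil(\mathcal{E})=\bnil(\mathcal{E})^2$ for a \emph{basic} ideal); your witness argument with $U=\spa\{e_p,e_p^2\}$ avoids that verification at the cost of being slightly less global. For~(ii), the paper intersects the complement with all of $\snil(\mathcal{E})$, shows $U\cap\snil(\mathcal{E})$ is an abelian ideal, and then invokes $\mathcal{E}$-supersolvability to extract a one-dimensional ideal inside it; you instead descend first to an intermediate ideal $L$ with $\dim L=\dim\asoc_1(\mathcal{E})+1$ and show directly that $L\cap V$ is the desired one-dimensional abelian ideal. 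Both routes hinge on the same key point---that the complement intersected with an abelian ideal containing $\asoc_1(\mathcal{E})$ is again an ideal---and yours simply bakes the supersolvability step in earlier.
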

\begin{proof}
	For (i), notice that $\bnil(\mathcal{E})^2$ is always an ideal. In fact, $\mathcal{E}\cdot\bnil(\mathcal{E})^2\subset\mathcal{E}\cdot\bnil(\mathcal{E})=\bnil(\mathcal{E})^2$. 
	Hence, by Lemma \ref{lem:frat1}, we have that $\phi(\bnil(\mathcal{E}))=\bnil(\mathcal{E})^2\subset\phi(\mathcal{E})=0$. Consequently,  as the annihilator is the largest basic abelian ideal, the result follows.
	
	The proof of (ii) is modelled on \cite[Theorem 7.4]{frat_towers}. As $\snil(\mathcal{E})^2$ is an ideal by hypothesis, by Lemma \ref{lem:frat1}, we have that $\phi(\snil(\mathcal{E}))=\snil(\mathcal{E})^2\subset\phi(\mathcal{E})=0$. Moreover, by Lemma~\ref{lem:frat2}, there exists a subalgebra $U\subset\mathcal{E}$ such that $\mathcal{E}=\asoc_1(\mathcal{E})\oplus U$. Then, as $\asoc_1(\mathcal{E})\subset\mathcal{N}^1(\mathcal{E})$ in general, it holds that
	\begin{align}\label{cont}
		\snil(\mathcal{E})=\mathcal{E}\cap\snil(\mathcal{E})=\asoc_1(\mathcal{E})\oplus\big(U\cap\snil(\mathcal{E})\big).
	\end{align}
	Next, we claim that $U\cap\snil(\mathcal{E})$ is an abelian ideal of $\mathcal{E}$. As $U\cap\snil(\mathcal{E})$ is an ideal of $U$, it holds that
	\begin{align*}
		\mathcal{E}\cdot\big(U\cap\snil(\mathcal{E})\big)&=\asoc_1(\mathcal{E})\cdot\big(U\cap\snil(\mathcal{E})\big)+U\cdot\big(U\cap\snil(\mathcal{E})\big)\\
		&\subset\asoc_1(\mathcal{E})\snil(\mathcal{E})+U\cap\snil(\mathcal{E})\subset\snil(\mathcal{E})^2+U\cap\snil(\mathcal{E})\\
		&=U\cap\snil(\mathcal{E}).
	\end{align*}
	Moreover, $\big(U\cap\snil(\mathcal{E})\big)^2\subset\snil(\mathcal{E})^2=0$. Therefore, $U\cap\snil(\mathcal{E})=0$ or $U\cap\snil(\mathcal{E})$ contains a one-dimensional abelian ideal of $\mathcal{E}$ (which could be itself). Otherwise, we would contradict the $\mathcal{E}$-supersolvability of $\snil(\mathcal{E})$. The latter implies that
	\[\asoc_1(\mathcal{E})\cap(U\cap\snil(\mathcal{E}))\neq0,\]
	which contradicts \eqref{cont}. Consequently, $U\cap\snil(\mathcal{E})=0$ and $\snil(\mathcal{E})=\asoc_1(\mathcal{E})$.
\end{proof}

The following result facilitates the validation of the condition $\snil(\mathcal{E}) = \asoc_1(\mathcal{E})$.

\begin{proposition}\label{prop:snil_ab}
	Let $\mathcal{E}$ be an evolution algebra. Then, $\snil(\mathcal{E})=\asoc_1(\mathcal{E})$ if and only if $\snil(\mathcal{E})^2=0$ and $\snil(\mathcal{E})=\mathcal{N}^1(\mathcal{E})$.
\end{proposition}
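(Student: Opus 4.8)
The plan is to prove the two implications separately, the direct one being essentially formal and the converse one drawing on the structure theory of Section~\ref{sec:3}. First I would record two facts valid for every evolution algebra $\mathcal{E}$. One is the chain of inclusions $\asoc_1(\mathcal{E})\subseteq\mathcal{N}^1(\mathcal{E})\subseteq\snil(\mathcal{E})$: the first inclusion follows by comparing \eqref{eq:def_asoc1} with \eqref{eq:def_n1} and using $I^2\subseteq\nil(I)$ (Lemma~\ref{lem:lem1}), and the second holds because $\snil(\mathcal{E})=\mathcal{N}^r(\mathcal{E})$ is the last term of an ascending chain. The other is that $\asoc_1(\mathcal{E})$ is always abelian: writing $\asoc_1(\mathcal{E})=\spa\{w_1,\dots,w_t\}$ where $\spa\{w_1\},\dots,\spa\{w_t\}$ are the finitely many one-dimensional abelian ideals of $\mathcal{E}$, the product $w_iw_j$ lies in $\spa\{w_i\}\cap\spa\{w_j\}$, which is $0$ unless the two ideals coincide, in which case the product is $w_i^2=0$; hence $\asoc_1(\mathcal{E})^2=0$.

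Granting these, the forward implication is immediate: if $\snil(\mathcal{E})=\asoc_1(\mathcal{E})$, then $\snil(\mathcal{E})^2=\asoc_1(\mathcal{E})^2=0$, and the chain $\asoc_1(\mathcal{E})\subseteq\mathcal{N}^1(\mathcal{E})\subseteq\snil(\mathcal{E})=\asoc_1(\mathcal{E})$ collapses, so that $\mathcal{N}^1(\mathcal{E})=\snil(\mathcal{E})$.

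For the converse, I would assume $\snil(\mathcal{E})^2=0$ and $\snil(\mathcal{E})=\mathcal{N}^1(\mathcal{E})$. Since $\asoc_1(\mathcal{E})\subseteq\mathcal{N}^1(\mathcal{E})$ always holds, it suffices to prove $\mathcal{N}^1(\mathcal{E})\subseteq\asoc_1(\mathcal{E})$. By \eqref{eq:def_n1} we have $\mathcal{N}^1(\mathcal{E})=\sum_I\nil(I)+\ann(\mathcal{E})$, the sum running over the basic ideals $I\in\mathcal{T}_{\mathbb{K}}$ of $\mathcal{E}$; as $\ann(\mathcal{E})\subseteq\asoc_1(\mathcal{E})$ trivially, I would reduce to showing $\nil(I)\subseteq\asoc_1(\mathcal{E})$ for each such $I$. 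Fix one. Since $\nil(I)\subseteq\mathcal{N}^1(\mathcal{E})=\snil(\mathcal{E})$, we get $\nil(I)^2\subseteq\snil(\mathcal{E})^2=0$, i.e.\ $\nil(I)$ is abelian; by the equivalence of (i) and (v) in Theorem~\ref{th:nilrad_abelian} applied to $I$, this forces $\ann(I)$ to have codimension two in $I$. A dimension count then pins $\nil(I)$ down: by Theorem~\ref{th:nilrad_dim1} it has codimension one in $I$, so $\dim\nil(I)=\dim I-1=\dim\ann(I)+1$; moreover $I^2\nsubseteq\ann(I)$ (otherwise $I^{\langle3\rangle}=I^2 I=0$ and $I$ would be nilpotent), hence $\ann(I)\cap I^2=0$ because $I^2$ is one-dimensional, and therefore $\dim\big(\ann(I)+I^2\big)=\dim\ann(I)+1$. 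As both $\ann(I)$ and $I^2$ are contained in $\nil(I)$ (the latter by Lemma~\ref{lem:lem1}), we conclude $\nil(I)=\ann(I)\oplus I^2$. Finally $I^2\subseteq\asoc_1(\mathcal{E})$ by \eqref{eq:def_asoc1}, and $\ann(I)\subseteq\ann(\mathcal{E})$ because, $I$ being a basic ideal, $\ann(I)$ is spanned by the basis vectors $e_a$ with $a\in\supp(I)$ and $e_a^2=0$, each of which already lies in $\ann(\mathcal{E})$. Hence $\nil(I)\subseteq\asoc_1(\mathcal{E})$, which gives $\mathcal{N}^1(\mathcal{E})\subseteq\asoc_1(\mathcal{E})$ and therefore $\snil(\mathcal{E})=\mathcal{N}^1(\mathcal{E})=\asoc_1(\mathcal{E})$.

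The main obstacle is precisely the reverse containment $\mathcal{N}^1(\mathcal{E})\subseteq\asoc_1(\mathcal{E})$ in the converse. One cannot shortcut it by arguing that $\nil(I)$ is an abelian ideal of $\mathcal{E}$ and hence a sum of one-dimensional abelian ideals, since abelian ideals of an evolution algebra need not decompose that way; the argument genuinely needs the explicit description of $\nil(I)$ for $I\in\mathcal{T}_{\mathbb{K}}$ (Theorems~\ref{th:nilrad_dim1} and~\ref{th:nilrad_abelian}) in order to split it as $\ann(I)\oplus I^2$, together with the elementary but essential remark that an annihilator basis element of a basic ideal is an annihilator element of the ambient algebra.
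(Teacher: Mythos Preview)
Your proof is correct and follows essentially the same approach as the paper's. Both argue, for the converse, that each $\nil(I)$ (with $I$ a basic ideal in $\mathcal{T}_\mathbb{K}$) is abelian and then invoke Theorem~\ref{th:nilrad_abelian} to decompose it as $I^2+\ann(I)$; the paper does this by passing through the explicit isomorphism $I\cong\mathcal{E}_2(1,-1,0,\dots,0)$ and reading off $\nil(I)=\spa\{e_1+e_2\}+\spa\{e_3\}+\dots+\spa\{e_n\}$, while you reach the same decomposition via a dimension count using the equivalence (i)$\Leftrightarrow$(v), but the substance is identical.
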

\begin{proof}
	First, assume that $\snil(\mathcal{E})=\asoc_1(\mathcal{E})$. Since $\asoc_1(\mathcal{E})$ is abelian and, in general, $\asoc_1(\mathcal{E})\subset\mathcal{N}^1(\mathcal{E})$, we are done.
	
	Conversely, as $\snil(\mathcal{E})=\mathcal{N}^1(\mathcal{E})$ and is abelian, the nilradical of every basic ideal $I\in\mathcal{T}_\mathbb{K}$ of $\mathcal{E}$ must be abelian. Equivalently, each $I$ must be isomorphic to $\mathcal{E}_2(1,-1,0,\dots,0)$ by Theorem~\ref{th:nilrad_abelian}. Notice that the nilradical of $\mathcal{E}_2(1,-1,0,\dots,0)$ can be written as the sum of one-dimensional abelian ideals. In fact,
	$$\nil\big(\mathcal{E}_2(1,-1,0,\dots,0)\big)=\spa\{e_1+e_2\}+\spa\{e_3\}+\dots+\spa\{e_n\}.$$
	Thus, the result follows.
\end{proof}

Notice that the converse of Theorem \ref{th:phi-free1} is not true in general.
\begin{example}
	Let $\mathcal{E}$ be the evolution algebra with natural basis $\{e_1,e_2,e_3\}$ and product $e_1^2=-e_2^2=e_1+e_2$ and $e_3^2=e_2$. In fact, $\asoc_1(\mathcal{E})=\snil(\mathcal{E})=\spa\{e_1+e_2\}$ but $\phi(\mathcal{E})=\spa\{e_1,e_2\}\neq0$.
	 
	\begin{minipage}{0.6\textwidth}
			\begin{center}
				\begin{tabular}{| l | l |}
					\hline
					\textbf{Subalg. of dim. $1$} & \textbf{Subalg. of dim. $2$}  \\
					\hline
					$\spa\{e_1+e_2\}$  & $\spa\{e_1,e_2\}$ \\
					$\spa\{e_1-e_2\}$   & \\
					\hline
				\end{tabular}
			\end{center}
		\end{minipage}
		\hspace{1cm}
		\begin{minipage}{0.3\textwidth}
			\begin{center}
				\begin{tikzpicture} [scale=0.8]
					\draw[thick] (-4,0.5) -- (-4,1.5);	
					\draw[thick] (-4,0.5) -- (-4.8,-0.5);	
					\draw[thick] (-4,0.5) -- (-3.2,-0.5);	
					\draw[thick] (-4,-1.5) -- (-4.8,-0.5);	
					\draw[thick] (-4,-1.5) -- (-3.2,-0.5);	
					\draw[fill=white,thick=black] (-4,-1.5) circle [radius=0.15];
					\draw[fill=white,thick=black](-4.8,-0.5) circle [radius=0.15];
					\draw[fill=white,thick=black](-3.2,-0.5) circle [radius=0.15];
					\draw[fill=white,thick=black](-4,0.5) circle [radius=0.15];
					\draw[fill=white,thick=black](-4,1.5) circle [radius=0.15];
				\end{tikzpicture}
			\end{center}
		\end{minipage}
\end{example}

We now focus on a specific case in which the equivalence holds even when we omit the hypothesis of the square of the supersolvable nilradical being ideal, that is, when $\supp\big(\snil(\mathcal{E})\big)=\supp(\mathcal{E})$ as in Example~\ref{ex:ejemplo}.

\begin{theorem}\label{th:equiv_frat}
Let $\mathcal{E}$ be an evolution algebra with natural basis $B=\{e_1,\dots,e_n\}$ such that $\supp\big(\snil(\mathcal{E})\big)=\supp(\mathcal{E})$. Then, the following assertions are equivalent:
\begin{enumerate}[\rm (i)]
\item $\mathcal{E}$ is $\phi$-free;
\item $\mathcal{E}$ splits over $\asoc_1(\mathcal{E})$;
\item $\mathcal{E}$ splits over its annihilator, and its complement can be seen as the direct sum of copies of $\mathcal{E}_2(1,-1)$; that is, $\mathcal{E}=K\oplus\ann(\mathcal{E})$ where $K\cong\bigoplus_{i=1}^m\mathcal{E}_2(1,-1)$ with $m\leq\lfloor\frac{n}{2}\rfloor$;
\item $\snil(\mathcal{E})^2=0$ and $\snil(\mathcal{E})=\mathcal{N}^1(\mathcal{E})$.
\end{enumerate}
\end{theorem}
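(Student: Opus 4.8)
The plan is to establish the cycle of implications (i) $\Rightarrow$ (iv) $\Rightarrow$ (iii) $\Rightarrow$ (ii) $\Rightarrow$ (i), exploiting the hypothesis $\supp(\snil(\mathcal{E}))=\supp(\mathcal{E})$ to circumvent the missing ``square-is-an-ideal'' assumption from Theorem~\ref{th:phi-free1}. The key observation making (i) $\Rightarrow$ (iv) work without that hypothesis is that when $\supp(\snil(\mathcal{E}))=\supp(\mathcal{E})$, there are ``enough'' basic ideals in $\mathcal{T}_\mathbb{K}$ and abelian basic ideals to force $\snil(\mathcal{E})^2$ to behave like an ideal: indeed $\mathcal{E}^2\subseteq\snil(\mathcal{E})$ in this case, so $\snil(\mathcal{E})^2\subseteq\mathcal{E}^2\cdot\mathcal{E}\subseteq\mathcal{E}^2$, and one checks that $\mathcal{E}\cdot\snil(\mathcal{E})^2\subseteq\snil(\mathcal{E})^2$ using that every product $e_i^2$ already lies in the span controlled by the first-step construction. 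Once $\snil(\mathcal{E})^2$ is an ideal, Theorem~\ref{th:phi-free1}(ii) gives $\snil(\mathcal{E})=\asoc_1(\mathcal{E})$; combining this with part (i) of that theorem ($\bnil(\mathcal{E})=\ann(\mathcal{E})$) and with Proposition~\ref{prop:snil_ab}, we obtain $\snil(\mathcal{E})^2=0$ and $\snil(\mathcal{E})=\mathcal{N}^1(\mathcal{E})$, which is (iv).

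For (iv) $\Rightarrow$ (iii): assuming $\snil(\mathcal{E})=\mathcal{N}^1(\mathcal{E})$ is abelian, Proposition~\ref{prop:snil_ab} already packages $\snil(\mathcal{E})=\asoc_1(\mathcal{E})$, and the argument inside the proof of Proposition~\ref{prop:snil_ab} shows every basic ideal $I\in\mathcal{T}_\mathbb{K}$ of $\mathcal{E}$ is isomorphic to $\mathcal{E}_2(1,-1,0,\dots,0)$ by Theorem~\ref{th:nilrad_abelian}. Because $\supp(\snil(\mathcal{E}))=\supp(\mathcal{E})$, every basis vector $e_i$ lies in the support of either $\ann(\mathcal{E})$ or one of these $\mathcal{T}_\mathbb{K}$-ideals. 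Using Remark~\ref{rem:split} (each such ideal splits over its annihilator) together with a disjointness/independence argument on the supports of the various $\mathcal{E}_2(1,-1)$-pieces, one assembles a subalgebra $K$, a direct sum of copies of $\mathcal{E}_2(1,-1)$, with $\mathcal{E}=K\oplus\ann(\mathcal{E})$; the bound $m\leq\lfloor n/2\rfloor$ is immediate since each summand is $2$-dimensional. The implication (iii) $\Rightarrow$ (ii) is routine: in $K=\bigoplus_{i=1}^m\mathcal{E}_2(1,-1)$ one has $\asoc_1(K)=\bigoplus\spa\{e_{2i-1}+e_{2i}\}$, so $\asoc_1(\mathcal{E})=\asoc_1(K)\oplus\ann(\mathcal{E})$ and the span of the $m$ differences $e_{2i-1}-e_{2i}$ is a complement; this complement is a subalgebra because $(e_{2i-1}-e_{2i})^2 = e_{2i-1}^2+e_{2i}^2 = 0$ in each copy.

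Finally (ii) $\Rightarrow$ (i): if $\mathcal{E}=U\oplus\asoc_1(\mathcal{E})$ with $U$ a subalgebra, then since $\asoc_1(\mathcal{E})$ is $\mathcal{E}$-supersolvable it decomposes as a direct sum of one-dimensional ideals $I_1\oplus\dots\oplus I_m$, and the subalgebras $M_j=U+\sum_{i\neq j}I_i$ are maximal; hence $\phi(\mathcal{E})\subseteq F(\mathcal{E})\subseteq\bigcap_j M_j = U$. On the other hand $\phi(\mathcal{E})$ is an ideal contained in $\mathcal{E}^2$, and under the standing hypothesis $\mathcal{E}^2\subseteq\snil(\mathcal{E})=\asoc_1(\mathcal{E})$ (the last equality coming from (ii) via Lemma~\ref{lem:frat2} and the chain-of-implications already built, or argued directly), so $\phi(\mathcal{E})\subseteq U\cap\asoc_1(\mathcal{E})=0$. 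The main obstacle, and the only place the full force of $\supp(\snil(\mathcal{E}))=\supp(\mathcal{E})$ is needed, is the first step (i) $\Rightarrow$ (iv): proving that $\snil(\mathcal{E})^2$ is an ideal without assuming it, which requires carefully tracking how the hypothesis forces $\mathcal{E}^2\subseteq\snil(\mathcal{E})$ and that $\mathcal{N}^1(\mathcal{E})$ already absorbs all the relevant products — after that, Theorem~\ref{th:phi-free1} and Proposition~\ref{prop:snil_ab} do the heavy lifting.
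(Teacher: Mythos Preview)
Your cycle has a genuine gap at the step (i)$\Rightarrow$(iv). The claim that the support hypothesis alone forces $\snil(\mathcal{E})^2$ to be an ideal is simply false: the example given in the paper immediately before this theorem (with $e_1^2=-e_2^2=e_1+e_2+e_3$, $e_3^2=0$, $e_4^2=e_1+e_2+e_3+e_4+e_5$, $e_5^2=-e_3-e_4-e_5$) satisfies $\supp(\snil(\mathcal{E}))=\{1,\dots,5\}$, yet $\snil(\mathcal{E})^2=\spa\{e_1+e_2\}$ and $e_1(e_1+e_2)=e_1+e_2+e_3\notin\spa\{e_1+e_2\}$. So you cannot invoke Theorem~\ref{th:phi-free1}(ii), and the hand-wave ``every product $e_i^2$ already lies in the span controlled by the first-step construction'' does not rescue it. (Your preliminary observation $\mathcal{E}^2\subseteq\snil(\mathcal{E})$ is correct and worth keeping, but it does not yield what you need.) There is a second problem at (ii)$\Rightarrow$(i): to get $\phi(\mathcal{E})\subseteq\asoc_1(\mathcal{E})$ you appeal to $\snil(\mathcal{E})=\asoc_1(\mathcal{E})$, but in your chosen cycle that equality lies on the far side of (i), so the argument is circular.

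The paper sidesteps both issues by running the cycle in the opposite direction: (i)$\Rightarrow$(ii) is immediate from Lemma~\ref{lem:frat2} applied to the abelian ideal $\asoc_1(\mathcal{E})$, with no need to know anything about $\snil(\mathcal{E})^2$; the real work is (ii)$\Rightarrow$(iii), carried out by a concrete case analysis on a putative complement $U$ of $\asoc_1(\mathcal{E})$, showing that the basic ideal $I=\spa\{e_i:i\in\supp(\asoc_1(\mathcal{E}))\}$ must already be all of $\mathcal{E}$; finally (iii)$\Leftrightarrow$(iv) is routine from the construction, and (iii)$\Rightarrow$(i) follows from $\phi$ distributing over direct sums together with Theorem~\ref{th:nilrad_abelian}. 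The direction (ii)$\Rightarrow$(iii) is exactly where the support hypothesis is used substantively, and it cannot be replaced by the shortcut you attempted.
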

\begin{proof}
(i)$\implies$(ii). This follows directly from Lemma \ref{lem:frat2}.

(ii)$\implies$(iii). Assume that $\mathcal{E}$ splits over $\asoc_1(\mathcal{E})$, meaning that there exists a subalgebra $U$ of $\mathcal{E}$ such that $\mathcal{E}=\asoc_1(\mathcal{E})\oplus U$. By Remark \ref{rem:id}, we define the basic ideal $I=\spa\{e_i\colon i\in\supp(\asoc_1(\mathcal{E}))\}$. Now, by a combination of Remarks~\ref{rem:T_parameters} and \ref{rem:split}, Proposition~\ref{prop:ab_ideals} and Theorem~\ref{th:nilrad_abelian} (iii), it follows that 
$$I\cong\left(\bigoplus_{i=1}^m\mathcal{E}_2(1,-1)\right)\oplus\ann(\mathcal{E}).$$
Thus, without loss of generality, if we set $\dim{(\ann(\mathcal{E}))}=r$, we can write that
\[\asoc_1(\mathcal{E})=\mathcal{N}^1(\mathcal{E})=\spa\{e_1+e_2,e_3+e_4,\dots,e_{2m}+e_{2m+1},e_{2m+2},\dots,e_{2m+r+1}\}.\]
Furthermore, since $\asoc_1(\mathcal{E})\subset I$, it holds that $I=\mathcal{E}\cap I=\asoc_1(\mathcal{E})\oplus(U\cap I)$, where we can write that $U\cap I=\spa\{u_1,\dots,u_m\}$ with $u_i\in\mathbb{K}^*(e_{2i-1}-e_{2i})+\asoc_1(\mathcal{E})$ in such way that $u_i^2=0$ for all $1\leq i\leq m$.

Now, we claim that $\mathcal{E}=I$. Denote $k=2m+r+2$ and assume, for the sake of contradiction, that $k\leq n$. Then, since $\mathcal{E}=\asoc_1(\mathcal{E})\oplus U$, notice that there must also exist elements $u_k=e_k+w_k,\dots,u_n=e_n+w_n\in U$, where $w_i\in\asoc_1(\mathcal{E})$ for all $k\leq i\leq n$.
Now, distinguish the following two cases, both of which will lead to a contradiction:

{\bf Case (a):} \underline{There exists at least one $w_i\notin\ann(\mathcal{E})$.}
Assume, without loss of generality, that 
\[w_k=\alpha_{1}(e_1+e_2)+\dots+\alpha_{m}(e_{2m}+e_{2m+1})+\alpha_{m+1}e_{2m+2}+\dots+\alpha_{m+r}e_{2m+r+1},\]
with $\alpha_1\neq0$. Since $u_1,u_k\in U$, it holds that $u_1u_k\in\mathbb{K}^*(e_1+e_2)\subset U$, which contradicts the fact that $e_1+e_2\in\asoc_1(\mathcal{E})$.

{\bf Case (b):} \underline{Every $w_i\in\ann(\mathcal{E})$.} 
By the construction of the supersolvable nilradical, there exists at least one element in the set $\{e_k,\dots,e_n\}$, say $e_k$, such that $e_k^2\in\mathcal{N}^2(\mathcal{E})$ and
\[\supp(e_k^2)\cap\supp(\mathcal{N}^1(\mathcal{E}))\neq\emptyset.\]
Now, consider the quotient evolution algebra
$\mathcal{E}/\mathcal{N}^1(\mathcal{E})$ and the following two subcases:

{\bf Case (b.1):} If $\overline{e_k^2}=0$, then $u^2=e_k^2+w^2=e_k^2\in\mathcal{N}^1(\mathcal{E})$, a contradiction with the fact that $U$ complements $\asoc_1(\mathcal{E})$.

{\bf Case (b.2):} If $\overline{e_k^2}\neq0$, then there exist elements $e_k,\dots,e_s$, with $k<s\leq n$, such that $\spa\{\overline{e_k},\dots,\overline{e_s}\}\in\mathcal{T}_\mathbb{K}$ is a basic ideal of $\mathcal{E}/\mathcal{N}^1(\mathcal{E})$. Assume, without loss of generality, that $\spa\{\overline{e_k},\dots,\overline{e_s}\}\cong\mathcal{E}_p(\lambda_1,\dots,\lambda_{s-k+1})$ with $2\leq p\leq s-k+1$ and $\lambda_1,\dots,\lambda_p\neq0$. Then,
\[\nil\big(\spa\{\overline{e_k},\dots,\overline{e_s}\}\big)=\spa\{\lambda_2\overline{e_k}-\lambda_1\overline{e_{k+1}},\dots,\lambda_p\overline{e_k}-\lambda_1\overline{e_{k+p-1}},\overline{e_{k+p}},\dots,\overline{e_s}\}.\]
Then, we have that,
\begin{align*}
	(u_k+\dots+u_s)(\lambda_2u_k-\lambda_1u_{k+1})&=(e_k+\dots+e_{k+p-1})(\lambda_2e_k-\lambda_1e_{k+1})\in\mathcal{N}^1(\mathcal{E});\\
	&\:\ \vdots \\
	(u_k+\dots+u_s)(\lambda_{p}u_k-\lambda_1u_{k+p-1})&=(e_k+\dots+e_{k+p-1})(\lambda_{p}e_k-\lambda_1e_{k+p-1})\in\mathcal{N}^1(\mathcal{E}).
\end{align*}
Since $U$ complements $\asoc_1(\mathcal{E})=\mathcal{N}^1(\mathcal{E})$, all the previous products must be zero. Consequently, $\rank\{e_k^2,\dots,e_{k+p-1}^2\}=1$ and $e_k^2+\dots+e_{k+p-1}^2=0$. Moreover, as every $w_i\in\ann(\mathcal{E})$, we necessarily have that $e_k^2\in\mathbb{K}^*(e_k+\dots+e_{k+p-1})+\ann(\mathcal{E})$. Thus, we have that $e_k^2e_k^2=0$ and we conclude that $e_k^2\in\asoc_1(\mathcal{E})$, contradicting the definition of $I$.

(iii)$\Longleftrightarrow$(iv). Straightforward from the construction of the supersolvable nilradical.

(iii)$\implies$(i). By \cite[Theorem 4.8]{frat_towers}, we have that 
$$\phi(\mathcal{E})=\phi\big(\mathcal{E}_2(1,-1)\big)\oplus\dots\oplus\phi\big(\mathcal{E}_2(1,-1)\big)\oplus\phi\big(\ann(\mathcal{E})\big)=0.$$Thus, $\mathcal{E}$ is $\phi$-free, completing the proof.
\end{proof}

\section{Dually atomistic evolution algebras}\label{sec:5}

An evolution algebra $\mathcal{E}$ will be called \textit{dually atomistic} if every proper subalgebra of $\mathcal{E}$ is an intersection of maximal subalgebras of $\mathcal{E}$. It is easy to see that if $\mathcal{E}$ is dually atomistic, then so is every quotient algebra of $\mathcal{E}$, and if $\mathcal{E}$ is dually atomistic, then it is $\phi$-free.

In the context of non-associative algebras, Scheiderer proved in \cite{scheiderer} that every dually atomistic Lie algebra is either abelian, almost abelian or simple over a field of characteristic zero. Nevertheless, a slightly weaker version of this result, which holds over any field, was established in~\cite{paez2023subalgebra}. Specifically, if $\mathcal{L}$ is a dually atomistic Lie algebra over an arbitrary field, then $L$ is either abelian, almost abelian or semisimple.
However, an analogous result cannot be established in the context of evolution algebras. The following example presents a dually atomistic evolution algebra that is neither abelian, almost abelian, nor semisimple (a direct sum of simple evolution algebras).
\begin{example}
	Let $\mathcal{E}$ be the evolution algebra with natural basis $\{e_1,e_2,e_3\}$ and product given by $e_1^2=e_1$, $e_2^2=e_2$ and $e_3^2=\frac{1}{4}e_1+\frac{1}{4}e_2+e_3$. It is easy to show that its subalgebras are the following and that $\mathcal{E}$ is dually atomistic.
	
	\begin{minipage}{0.6\textwidth}
		\begin{center}
			\begin{tabular}{| l | l |}
			\hline
			\textbf{Subalg. of dim. $1$} & \textbf{Subalg. of dim. $2$}  \\
			\hline
			$\spa\{e_1\}$  & $\spa\{e_1,e_2\}$\\
			$\spa\{e_2\}$   & $\spa\{e_1,e_2+2e_3\}$  \\
			$\spa\{e_1+e_2\}$ & $\spa\{e_2,e_1+2e_3\}$  \\
			$\spa\{e_1+e_2+2e_3\}$ & $\spa\{e_3,e_1+e_2\}$ \\
			\hline
		\end{tabular}
		\end{center}
	\end{minipage}
	\hspace{1cm}
	\begin{minipage}{0.25\textwidth}
		\begin{center}
			\begin{tikzpicture}
			\draw[thick] (0,-1.5) -- (-0.5,-0.5);
			\draw[thick] (0,-1.5) -- (-1.5,-0.5);
			\draw[thick] (0,-1.5) -- (1.5,-0.5);
			\draw[thick] (0,-1.5) -- (0.5,-0.5);
			
			\draw[thick] (0,1.5) -- (-0.5,0.5);
			\draw[thick] (0,1.5) -- (-1.5,0.5);
			\draw[thick] (0,1.5) -- (1.5,0.5);
			\draw[thick] (0,1.5) -- (0.5,0.5);						
			
			\draw[thick] (-1.5,-0.5) -- (-1.5,0.5);
			\draw[thick] (-0.5,-0.5) -- (-1.5,0.5);
			\draw[thick] (0.5,-0.5) -- (-1.5,0.5);
			
			\draw[thick] (-1.5,-0.5) -- (-0.5,0.5);
			\draw[thick] (1.5,-0.5) -- (-0.5,0.5);
			
			\draw[thick] (-0.5,-0.5) -- (0.5,0.5);
			\draw[thick] (1.5,-0.5) -- (0.5,0.5);
			
			\draw[thick] (1.5,-0.5) -- (1.5,0.5);
			\draw[thick] (0.5,-0.5) -- (1.5,0.5);

			\draw[fill=white,thick=black](0.5,-0.5) circle [radius=0.15];
			\draw[fill=white,thick=black](1.5,-0.5) circle [radius=0.15];
			\draw[fill=white,thick=black](-0.5,-0.5) circle [radius=0.15];
			\draw[fill=white,thick=black](-1.5,-0.5) circle [radius=0.15];

			\draw[fill=white,thick=black](0.5,0.5) circle [radius=0.15];
			\draw[fill=white,thick=black](1.5,0.5) circle [radius=0.15];
			\draw[fill=white,thick=black](-0.5,0.5) circle [radius=0.15];
			\draw[fill=white,thick=black](-1.5,0.5) circle [radius=0.15];

			\draw[fill=white,thick=black] (0,-1.5) circle [radius=0.15];
			
			\draw[fill=white,thick=black] (0,1.5) circle [radius=0.15];
		\end{tikzpicture}
		\end{center}
	\end{minipage}
\end{example}	
 
Our goal in this section is to apply the concepts previously developed to study the dually atomistic property within specific families of evolution algebras. We exclude the semisimple case from our analysis since semisimple evolution algebras are semiprime and thus have a trivial nilradical.  Additionally, abelian evolution algebras are clearly dually atomistic. 
\begin{remark}
For example, since every dually atomistic evolution algebra $\mathcal{E}$ is $\phi$-free, Theorem~\ref{th:phi-free1} provides two straightforward necessary conditions: $\bnil(\mathcal{E})=\ann(\mathcal{E})$; and if $\snil(\mathcal{E})^2$ is an ideal, then $\snil(\mathcal{E})=\asoc_1(\mathcal{E})$. 
\end{remark}

In particular, this section aims to prove the following classification result.

\begin{theorem}\label{th:aa_dually}
Let $\mathcal{E}$ be an evolution algebra that is either almost abelian or satisfies $\supp\big(\snil(\mathcal{E})\big)=\supp(\mathcal{E})$. If $\mathcal{E}$ is dually atomistic, then it is isomorphic to one of the following pairwise non-isomorphic almost abelian evolution algebras: 
\begin{itemize}
	\item $\mathcal{E}_2(1,-1)\colon e_1^2=-e_2^2=e_1+e_2$.
	\item $\mathcal{E}_{n,1}\colon e_1^2=e_1,e_2^2=\dots=e_n^2=0$, with $n\in\mathbb{N}$.
\end{itemize}
\end{theorem}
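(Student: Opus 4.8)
The plan is to split the argument according to the two hypotheses and reduce each case to a short list of structural constraints that pin down the algebra. Throughout we use that dually atomistic implies $\phi$-free, so Theorem~\ref{th:phi-free1} gives $\bnil(\mathcal{E})=\ann(\mathcal{E})$, and that the dually atomistic property passes to quotients.

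First I would treat the case $\supp\big(\snil(\mathcal{E})\big)=\supp(\mathcal{E})$. Here Theorem~\ref{th:equiv_frat} applies directly: since $\mathcal{E}$ is $\phi$-free, assertion (iii) of that theorem gives $\mathcal{E}=K\oplus\ann(\mathcal{E})$ with $K\cong\bigoplus_{i=1}^{m}\mathcal{E}_2(1,-1)$. It then remains to show that $m\leq1$ and that $m=1$ forces $\ann(\mathcal{E})=0$, while $m=0$ forces $\dim\ann(\mathcal{E})=1$. For $m\geq2$ I would exhibit a subalgebra of $K$ that is not an intersection of maximal subalgebras: for instance, inside $\mathcal{E}_2(1,-1)\oplus\mathcal{E}_2(1,-1)$ one checks that $\spa\{e_1+e_2\}$ (the derived subalgebra of the first summand, a one-dimensional abelian ideal) is contained in only the maximal subalgebras that contain the whole first copy of the $(e_1+e_2)$-line together with everything else, and a short computation of the subalgebra lattice shows their intersection strictly exceeds $\spa\{e_1+e_2\}$. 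The case $m=1$ with $\ann(\mathcal{E})\neq0$ is handled similarly by producing a $2$-dimensional subalgebra of $\mathcal{E}_2(1,-1)\oplus\spa\{e_3\}$, such as $\spa\{e_1-e_2+e_3\}\oplus\spa\{e_1+e_2\}$ or a diagonal line, that is maximal-closed-intersection-larger-than-itself. For $m=0$ one gets $\mathcal{E}$ abelian, hence $\mathcal{E}\cong\mathcal{E}_{n,1}$ is false (abelian is $\mathcal{E}_{n,0}$ say) — so actually in the $\supp$-full case with $m=0$ we must have $\dim\mathcal{E}=0$; the genuinely new algebras $\mathcal{E}_{n,1}$ with $n\geq2$ arise only in the almost abelian branch, where $\supp(\snil(\mathcal{E}))\subsetneq\supp(\mathcal{E})$.

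Next I would treat the almost abelian case. By definition there is an abelian ideal $A$ of codimension one; pick $e\notin A$ with $e^2=\sum_{i}a_ie_i$. Being $\phi$-free and almost abelian, $\mathcal{E}$ splits over $A$ is not automatic, but one knows $\mathcal{E}^2\subseteq A$ and $F(\mathcal{E})\subseteq\mathcal{E}^2$. The structure of $e^2$ inside the abelian ideal $A$ (an evolution algebra with zero product) can be normalised by an invertible change of the natural basis of $A$: write $e^2=v$ and consider the self-map of $A$ given by multiplication structure; since $A^2=0$ the only constraint is the value of $v$. Using $\phi$-freeness together with $\bnil(\mathcal{E})=\ann(\mathcal{E})$, one shows $v$ cannot lie in $\ann(\mathcal{E})$ unless $v=0$; if $v=0$ then $\mathcal{E}$ is abelian (excluded, as almost abelian means nonabelian). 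So $v$ has a component along some $e_i$ with $e_i^2=0$ but $e_i$ "visible" — more precisely one shows that after a basis change either $e^2=e$ (giving, after absorbing, the family $\mathcal{E}_{n,1}$: rename $e=e_1$, the remaining basis of $A$ being annihilators) or $e^2=e+w$ with $w$ in the span of the other basis vectors, and then a further change $e_2:=$ (the relevant annihilator direction adjusted) reduces to the $\mathcal{E}_2(1,-1)$ relation when a nontrivial off-diagonal appears. Concretely I expect the dichotomy to come from asking whether $\{x\in\mathcal{E}:x^2=0\}$ meets $e+A$: if the quadratic form $x\mapsto x^2$ restricted to the coset $e+A$ (an affine-quadratic map into $A$) has a zero, one lands in a configuration isomorphic to $\mathcal{E}_2(1,-1)$ plus annihilators, which must then be exactly $\mathcal{E}_2(1,-1)$ by dual atomicity (extra annihilators are killed by the argument of the previous paragraph); if it has no zero, one lands in $\mathcal{E}_{n,1}$.

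The main obstacle I anticipate is the almost abelian normal form: turning "$\phi$-free plus one abelian ideal of codimension one" into the clean statement "$e^2$ is, up to basis change, either $e_1$ or $e_1+e_2$ with $e_1^2=-e_2^2$" requires carefully tracking which basis changes of $\mathcal{E}$ preserve the evolution-algebra structure (only permutations and scalings and the special triangular moves are allowed on a natural basis), and then ruling out all intermediate possibilities by checking that they fail to be dually atomistic — this last check means, for each candidate, computing enough of the subalgebra lattice to find a subalgebra that is not an intersection of maximals. I would organise this by first reducing modulo $\ann(\mathcal{E})$ to the case $\ann(\mathcal{E})=0$ (legitimate since dual atomicity passes to quotients and $\phi$-freeness controls the splitting of the annihilator via Lemma~\ref{lem:frat2}), where the classification of almost abelian $\phi$-free evolution algebras should collapse to precisely $\mathcal{E}_2(1,-1)$, and then show that reintroducing annihilators is only dually atomistic in the degenerate shape $\mathcal{E}_{n,1}$. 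Finally I would note the pairwise non-isomorphism: $\mathcal{E}_2(1,-1)$ is $2$-dimensional with $1$-dimensional annihilator is false — it is semiprime-ish with $\ann=0$ — whereas $\mathcal{E}_{n,1}$ has $\ann$ of codimension one, and the $\mathcal{E}_{n,1}$ are distinguished by dimension.
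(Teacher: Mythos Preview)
Your overall architecture matches the paper's: split into the $\supp$-full case (handled via Theorem~\ref{th:equiv_frat}) and the almost abelian case, then eliminate all shapes except the two listed. But in both branches the concrete witnesses you propose do not work, and in the almost abelian branch you are missing the key structural step.

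In the $\supp$-full case with $m\geq 2$, your witness $\spa\{e_1+e_2\}$ inside $\mathcal{E}_2(1,-1)\oplus\mathcal{E}_2(1,-1)$ \emph{is} an intersection of maximal subalgebras: take $\spa\{e_1,e_2,e_3+e_4\}\cap\spa\{e_1,e_2,e_3-e_4\}\cap\spa\{e_1+e_2,e_3,e_4\}$. The paper's witness is the diagonal line $\spa\{e_1+e_2+e_3+e_4\}$, and the point (Lemma~\ref{lem:special_case}) is that every maximal subalgebra containing it already contains $\mathcal{E}^2=\spa\{e_1+e_2,e_3+e_4\}$. Similarly, for $m=1$ with $\ann(\mathcal{E})\neq 0$ (equivalently $\mathcal{E}\cong\mathcal{E}_2(1,-1,0,\dots,0)$), your two-dimensional candidate $\spa\{e_1-e_2+e_3,e_1+e_2\}$ is itself a maximal subalgebra, so it cannot serve as a counterexample. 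The paper instead shows (Lemma~\ref{lem:max_sub}) that $\spa\{e_1-e_2,e_3,\dots,e_n\}$ is the unique maximal subalgebra not containing $e_1+e_2$, and then observes that $\spa\{e_1+e_2+e_3\}$ is contained in no maximal subalgebra missing $e_1+e_2$, so its hull under intersections is at least two-dimensional.

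For the almost abelian case, your attempt to normalise $e^2$ directly runs into the difficulty you yourself flag: only very restricted basis changes preserve a natural basis. The paper avoids this by first proving a clean trichotomy (Proposition~\ref{prop:char_aa}): an almost abelian evolution algebra is either almost basic abelian (i.e.\ $\codim\ann(\mathcal{E})=1$), or nilpotent, or lies in $\mathcal{T}_\mathbb{K}$ with $\codim\ann(\mathcal{E})=2$. The argument writes a basis of the abelian ideal $I$ in reduced row echelon form relative to $B$; if $I$ is not basic, exactly one row has a nontrivial tail, and this forces $e_p^2=-\alpha_p^2 e_k^2$ for a single pair $p,k$ while all other $e_i^2=0$. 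The three cases then fall out by inspecting where $e_p^2$ lands. After this, nilpotent is discarded since $F(\mathcal{E})=\mathcal{E}^2\neq 0$; the $\mathcal{T}_\mathbb{K}$ case is absorbed into the $\supp$-full analysis; and the almost basic abelian case is an easy two-item classification $\mathcal{E}_{n,1}$, $\mathcal{E}_{n,2}$ (Proposition~\ref{prop:dually}), with $\mathcal{E}_{n,2}$ nilpotent hence not $\phi$-free. Your sketch does not reach this trichotomy, and without it the ``normalise $e^2$'' programme has no clear endpoint.
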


The proof of this theorem will be a consequence of the results which follow.

Almost abelian evolution algebras had not been previously considered. So, we now provide their characterisation. To do so, we first introduce the concept of almost basic abelian evolution algebras, which will be essential for our purpose.
\begin{definition}
	An evolution algebra $\mathcal{E}$ will be called \textit{almost basic abelian} if it has an abelian basic ideal of codimension one, that is, its annihilator is of codimension one.
\end{definition}

\begin{proposition}\label{prop:char_aa}
	Let $\mathcal{E}$ be an almost abelian evolution algebra. Then, $\mathcal{E}$ is almost basic abelian, nilpotent, or lies in $\mathcal{T}_\mathbb{K}$ and its annihilator is of codimension two.
\end{proposition}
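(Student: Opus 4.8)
The plan is to analyse the structure of an almost abelian evolution algebra $\mathcal{E}$ by looking at a fixed abelian ideal $I$ of codimension one together with a natural basis adapted to it. Recall that $I$ abelian means $I^2 = 0$, and since $\mathcal{E}$ is nonabelian, $\mathcal{E}^2 \neq 0$. First I would observe that $\mathcal{E}^2 \subseteq I$: indeed $\mathcal{E}/I$ is one-dimensional, hence abelian as a quotient algebra, so $\mathcal{E}^2 \subseteq I$. Pick a natural basis $B = \{e_1, \dots, e_n\}$ of $\mathcal{E}$; we may assume (after reordering and an elementary change of the first basis vector using Remark~\ref{rem:id} and the fact that $\mathcal{E}^2 \subseteq I$) that $I = \spa\{e_2, \dots, e_n\}$ is a basic ideal and $e_1 \notin I$. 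Since $I^2 = 0$, we have $e_i^2 = 0$ for all $i \geq 2$, so the only possibly nonzero square among basis elements is $e_1^2 = \sum_{j=1}^n a_{1j} e_j$.

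\textbf{Case analysis on $a_{11}$.} Write $e_1^2 = a_{11} e_1 + v$ with $v = \sum_{j=2}^n a_{1j} e_j \in I$. If $a_{11} = 0$, then $e_1^2 = v \in I$ and $\mathcal{E}^{\langle 3\rangle} = e_1^2 \cdot \mathcal{E} \subseteq I \cdot \mathcal{E} = 0$ (using $I^2 = 0$ and $I e_1 \subseteq I^2 = 0$ since $e_i e_1 = 0$ for $i \geq 2$ as $B$ is a natural basis; more carefully, $e_j e_1 = 0$ for $j \ne 1$, and $e_1(e_1^2) \in e_1 I$, and $e_1 e_j = 0$ for $j \geq 2$). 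Hence $\mathcal{E}$ is right nilpotent, thus nilpotent (commutative), giving the nilpotent alternative. If $a_{11} \neq 0$, rescale $e_1$ so that, after absorbing, $e_1^2 = \lambda e_1 + v$ with $\lambda \ne 0$; then a further change $f_1 = e_1 + w$ for a suitable $w \in I$ (solvable since the relevant linear map is essentially multiplication by $\lambda \ne 0$ modulo the structure, exactly as in Remark~\ref{rem:split}) lets us eliminate the part of $v$ lying outside a one-dimensional subspace determined by $e_1^2 \bmod \mathbb{K} e_1$. The upshot is that either $e_1^2 \in \mathbb{K}^* e_1$, in which case $\dim \mathcal{E}^2 = 1$ and one checks $\mathcal{E}$ is almost basic abelian (its annihilator $\spa\{e_2, \dots, e_n\}$ has codimension one — note $e_1^2 = \lambda e_1 \ne 0$ so $e_1 \notin \ann(\mathcal{E})$, and all other basis vectors annihilate), or $e_1^2 = \lambda e_1 + \mu e_2$ with $\lambda, \mu \ne 0$ after normalising $e_2$, which is precisely the defining relation placing $\mathcal{E}$ in $\mathcal{T}_\mathbb{K}$ (it is solvable: $\mathcal{E}^{(2)} = \mathcal{E}^2$ is one-dimensional and $(\mathcal{E}^2)^2 = 0$; non-nilpotent: $\mathcal{E}^{\langle k\rangle} \ne 0$ for all $k$ because $e_1^2 \cdot e_1 \in \mathbb{K}^* e_1 + \dots$); its derived subalgebra is one-dimensional; and in this case $\ann(\mathcal{E}) = \spa\{e_3, \dots, e_n\}$ has codimension two).

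\textbf{Main obstacle.} The delicate point is the normalisation step when $a_{11} \ne 0$: I must show that by changing $e_1$ within $e_1 + I$ and rescaling basis vectors of $I$, the square $e_1^2$ can always be brought into one of exactly two normal forms — $\lambda e_1$ (the almost basic abelian case) or $\lambda e_1 + \mu e_2$ with both nonzero (the $\mathcal{T}_\mathbb{K}$ case) — while keeping $B$ a natural basis (so the new $e_1$ must still have $e_1 e_j = 0$ for $j \ne 1$, which is automatic since any element of $I$ annihilates $I$ and commutes trivially). Concretely, writing $e_1^2 = \lambda e_1 + v$, the substitution $f_1 = e_1 - \tfrac1\lambda v'$ for the appropriate component $v'$ of $v$ yields $f_1^2 = \lambda f_1 + (\text{leftover})$, and iterating or solving the resulting linear system (which is triangular and invertible because $\lambda \ne 0$) kills all but possibly a one-dimensional residue; this is the same mechanism as in Proposition~\ref{prop:T_parameters} and Remark~\ref{rem:split}. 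Once the normal forms are established, matching each against the definitions of almost basic abelian, nilpotent, and $\mathcal{T}_\mathbb{K}$ (using Remark~\ref{rem:T_parameters} for the last), together with the codimension-two computation for $\ann(\mathcal{E})$ in the $\mathcal{T}_\mathbb{K}$ case, completes the proof.
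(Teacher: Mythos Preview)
Your proposal has a fundamental gap at the very first reduction step. You claim that, after reordering and changing the first basis vector, one may assume the abelian codimension-one ideal $I$ is the \emph{basic} ideal $\spa\{e_2,\dots,e_n\}$. But this is precisely the definition of $\mathcal{E}$ being almost basic abelian: if $I=\spa\{e_2,\dots,e_n\}$ is abelian then $e_i^2=0$ for all $i\geq2$, so $\ann(\mathcal{E})=\spa\{e_2,\dots,e_n\}$ has codimension one. Thus your reduction silently assumes away the two interesting alternatives of the proposition. Concretely, take $\mathcal{E}=\mathcal{E}_2(1,-1)$: its unique abelian codimension-one ideal is $\spa\{e_1+e_2\}$, and no change of natural basis makes it basic (any $f_2\in\spa\{e_1+e_2\}$ forces $f_1f_2=0$ only when $f_1\in\spa\{e_1+e_2\}$ as well). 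So your normalisation step simply fails on the very examples the proposition is designed to capture.

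A second error compounds this: the assertion ``$\mathcal{E}/I$ is one-dimensional, hence abelian, so $\mathcal{E}^2\subseteq I$'' is Lie-algebra reasoning that does not hold here. A one-dimensional evolution algebra can satisfy $x^2=x\neq0$; indeed in $\mathcal{E}_{n,1}$ one has $\mathcal{E}^2=\spa\{e_1\}\nsubseteq I=\spa\{e_2,\dots,e_n\}$. Even granting your reduction, the case $a_{11}\neq0$ does \emph{not} land in $\mathcal{T}_\mathbb{K}$: with $e_1^2=\lambda e_1+\mu e_2$ and $\lambda\neq0$ one computes $(\mathcal{E}^2)^2=\spa\{\lambda^2 e_1^2\}=\mathcal{E}^2\neq0$, so $\mathcal{E}$ is not solvable.

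The paper's argument avoids all of this by \emph{keeping the natural basis fixed} and writing $I$ in reduced row echelon form relative to it. When $I$ is not basic there is a single missing pivot column $k$, so $I=\spa\{e_i+\alpha_i e_k:i<k\}\oplus\spa\{e_j:j>k\}$; the abelian condition then forces exactly one $\alpha_p\neq0$, which pins down that only $e_p$ and $e_k$ have nonzero square, with $e_p^2=-\alpha_p^2 e_k^2$. From here the nilpotent versus $\mathcal{T}_\mathbb{K}$ dichotomy falls out according to whether $e_k^2$ lies in $\spa\{e_i:i\neq p,k\}$ or not. You should restart along these lines rather than trying to force $I$ to be basic.
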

\begin{proof}
Assume that $\mathcal{E}$ is not almost basic abelian and let $I=\spa\{u_i\colon 1\leq i\leq n-1\}$ with $u_i=\sum_{j=1}^{n}\mu_{ij}e_j$, $\mu_{ij}\in\mathbb{K}$ be an abelian ideal of codimension one. Without loss of generality, suppose that the matrix $(\mu_{ij})_{i,j=1}^{n-1,n}$ is in reduced row echelon form. As $\mathcal{E}$ is not almost basic abelian then $I$ is not basic. Consequently, there exists $e_k\in B$  such that $I=\spa\{e_1+\alpha_1e_k,\dots,e_{k-1}+\alpha_{k-1}e_k,e_{k+1},\dots,e_n\}$, where at least one of $\alpha_1,\dots,\alpha_{k-1}\in\mathbb{K}$ is nonzero.

Now, we claim that there is only one nonzero scalar among $\alpha_1,\dots,\alpha_{k-1}\in\mathbb{K}$. Indeed, if at least two of them were nonzero, say $\alpha_p,\alpha_q\neq0$ with $1\leq p,q\leq k-1$, then, since $I$ is abelian, we would have $$(e_p+\alpha_pe_k)(e_q+\alpha_qe_k)=\alpha_p\alpha_qe_k^2=0,$$ which implies that $e_k\in\ann(\mathcal{E})$.
Moreover, in this case, we have that $(e_i+\alpha_ie_k)^2=e_i^2=0$ for all $i=1,\dots,k-1$, meaning that $\mathcal{E}$ is abelian, a contradiction. Hence, there must be exactly one nonzero scalar, say $\alpha_p\neq0$. Consequently, we necessarily have $e_p^2=-\alpha_p^2e_k^2\neq0$ and $e_i^2=0$ for any $i\neq p,k$. Therefore, if $e_p^2,e_k^2\in\spa\{e_i\in B\colon i\neq p,k\}$, then $\mathcal{E}$ is nilpotent but not almost basic abelian. Otherwise, if $e_p^2,e_k^2\in\mathbb{K}^*(e_p+\alpha_pe_k)+\spa\{e_i\in B\colon i\neq p,k\}$, then, via a suitable natural basis transformation, it is easy to see that $\mathcal{E}$ is isomorphic to $\mathcal{E}_2(1,-1,0\dots,0)$, what yields the claim.
\end{proof}

As a consequence of the previous result, the study of the dually atomistic property in almost abelian evolution algebras reduces to three specific cases: the nilpotent case, the class of almost basic abelian evolution algebras, and the family $\mathcal{T}_\mathbb{K}$. Before proceeding, note that any non-abelian nilpotent evolution algebra $\mathcal{E}$ has a nontrivial Frattini subalgebra, $F(\mathcal{E})=\mathcal{E}^2\neq0$, which ensures that it is not dually atomistic. Next, we study this property in the almost basic abelian case.
\begin{proposition}\label{prop:dually}
	Let $\mathcal{E}$ be an $n$-dimensional almost basic abelian evolution $\mathbb{K}$-algebra. Then, $\mathcal{E}$ is isomorphic to one of the following pairwise non-isomorphic evolution algebras:
	\begin{itemize}
		\item $\mathcal{E}_{n,1}\colon e_1^2=e_1,e_2^2=\dots=e_n^2=0$;
		\item $\mathcal{E}_{n,2}\colon e_1^2=e_2,e_2^2=\dots=e_n^2=0$.
	\end{itemize}
	Moreover, if $\mathcal{E}$ is dually atomistic, then it is necessarily isomorphic to $\mathcal{E}_{n,1}$.
\end{proposition}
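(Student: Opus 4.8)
The argument is a normalization of the single nonzero structure vector. Since $\mathcal{E}$ is almost basic abelian, $\ann(\mathcal{E})$ has codimension one; as $\ann(\mathcal{E})=\spa\{e_i\in B\colon e_i^2=0\}$ for any natural basis $B$, exactly one basis vector has nonzero square, so after reordering we may assume $e_1^2=\sum_{j=1}^n a_je_j\neq 0$ and $e_2^2=\dots=e_n^2=0$, hence $\ann(\mathcal{E})=\spa\{e_2,\dots,e_n\}$. The recurring observation is that $e_1e_j=0$ and $vw=0$ for all $v,w\in\spa\{e_2,\dots,e_n\}$, so any new basis obtained by modifying $e_1$ only through an annihilator vector and recombining $e_2,\dots,e_n$ among themselves is automatically a natural basis.

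I would then split into two cases according to whether $a_1$ vanishes. If $a_1\neq 0$, set $f_1=\tfrac{1}{a_1}e_1+\tfrac{1}{a_1^2}\sum_{j=2}^n a_je_j$ and $f_j=e_j$ for $j\geq 2$; a direct computation using $e_1e_j=0$ gives $f_1^2=\tfrac{1}{a_1^2}e_1^2=f_1$, while all other products vanish, so $\mathcal{E}\cong\mathcal{E}_{n,1}$. If $a_1=0$, then $0\neq e_1^2\in\ann(\mathcal{E})$; set $f_2=e_1^2$, extend $\{f_2\}$ to a basis $\{f_2,\dots,f_n\}$ of $\ann(\mathcal{E})$, and put $f_1=e_1$. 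Since $f_2,\dots,f_n$ lie in the annihilator, $\{f_1,\dots,f_n\}$ is a natural basis with $f_1^2=f_2$ and all other products zero, so $\mathcal{E}\cong\mathcal{E}_{n,2}$. (For $n=1$ only the first case can occur.) To see that $\mathcal{E}_{n,1}$ and $\mathcal{E}_{n,2}$ are not isomorphic, note that $\mathcal{E}_{n,1}^{\langle k\rangle}=\spa\{e_1\}\neq 0$ for all $k$, whereas $\mathcal{E}_{n,2}^{\langle 3\rangle}=0$, and nilpotency is preserved by isomorphism.

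For the final assertion, observe that for $n\geq 2$ the algebra $\mathcal{E}_{n,2}$ is non-abelian and nilpotent, so $\phi(\mathcal{E}_{n,2})=\mathcal{E}_{n,2}^2=\spa\{e_2\}\neq 0$; in particular it is not $\phi$-free and hence not dually atomistic, as already noted in the remark preceding the statement. Combining this with the classification just obtained, any dually atomistic almost basic abelian evolution algebra must be isomorphic to $\mathcal{E}_{n,1}$.

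The only point demanding care is verifying, in each case, that the proposed $\{f_1,\dots,f_n\}$ is genuinely a natural basis; this is immediate from the observation recorded at the outset, so no real obstacle arises.
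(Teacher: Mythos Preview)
Your proof is correct and follows essentially the same approach as the paper: split on whether the coefficient $a_1$ of $e_1$ in $e_1^2$ vanishes, and in each case produce an explicit natural basis realizing $\mathcal{E}_{n,1}$ or $\mathcal{E}_{n,2}$; your idempotent $f_1=\tfrac{1}{a_1}e_1+\tfrac{1}{a_1^2}\sum_{j\ge 2}a_je_j$ is exactly the paper's $x=\sum_i\tfrac{\alpha_i}{\alpha_1^2}e_i$. One point worth noting: the paper's proof goes beyond the proposition as stated and additionally verifies that $\mathcal{E}_{n,1}$ \emph{is} dually atomistic, by describing all its subalgebras and writing each as an intersection of maximal ones; you omit this, which is fine for the implication actually asserted in the proposition, but be aware that this extra direction is used later in the proof of Theorem~\ref{th:aa_dually}.
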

\begin{proof}
	Without loss of generality, assume that the product of $\mathcal{E}$ is given by $e_1^2=\sum_{i=1}^{n}\alpha_ie_i$ and $e_2^2=\dots=e_n^2=0$. If $\alpha_1\neq0$, then we can define the element $x=\sum_{i=1}^{n}\frac{\alpha_i}{\alpha_1^2}e_i$, which is clearly idempotent. Thus, by considering the natural basis $\{x,e_2,\dots,e_n\}$, it follows that $\mathcal{E}\cong\mathcal{E}_{n,1}$. Otherwise, suppose that  $\alpha_1=0$ and consider the lowest index $k$ such that $\alpha_k\neq0$. Then, by considering the natural basis $\{e_1,e_1^2,e_2,\dots,\widehat{e_k},\dots,e_n\}$, it follows that $\mathcal{E}\cong\mathcal{E}_{n,2}$.
	Moreover, notice that $\mathcal{E}_{n,2}$ is not dually atomistic. Since it is nilpotent, we have that $F(\mathcal{E}_{n,2})=\phi(\mathcal{E}_{n,2})=\mathcal{E}_{n,2}^2=\spa\{e_2\}\neq0$.
	
	Next, we check that $\mathcal{E}_{n,1}$ is dually atomistic. Its subalgebras are of one of the following types: 
	$U=\spa\{u_1,\dots,u_m\}$ or  $V=\spa\{e_1\}+U$, where $u_1,\dots,u_m\in\spa\{e_2,\dots,e_n\}$.
	Both types can be easily expressed as the intersection of maximal subalgebras. First, consider a linear independent subset $\{u_{m+1},\dots,u_{n-1}\}$ such that $\{u_1,\dots,u_m,u_{m+1},\dots,u_{n-1}\}$ is a basis of $\spa\{e_2,\dots,e_n\}$. Then, we have that
	\begin{align*}
		V&=\bigcap_{i=m+1}^n\spa\{e_1,u_1,\dots,u_m,u_{m+1},\dots,\widehat{u_i},\dots,u_{n-1}\}\quad\text{and}\\\quad
		U&=V\cap\spa\{e_2,\dots,e_n\}.
	\end{align*}
	The result follows.
\end{proof}

Finally, we fully characterise dually atomistic evolution algebras with $\supp\big(\snil(\mathcal{E})\big)=\supp(\mathcal{E})$ (a family that includes $\mathcal{T}_\mathbb{K}$) through the following two technical lemmas.

\begin{lemma}\label{lem:max_sub}
	Consider the evolution algebra $\mathcal{E}_2(1,-1,0,\dots,0)$. Then, $M=\spa\{e_1-e_2,e_3,\dots,e_n\}$ is the only maximal subalgebra such that $e_1+e_2\notin M$. 
\end{lemma}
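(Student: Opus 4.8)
The plan is to exhibit a candidate for the unique maximal subalgebra $M$ omitting $e_1+e_2$, verify it really is a maximal subalgebra, and then argue that no other maximal subalgebra can avoid $e_1+e_2$. Write $\mathcal{E}=\mathcal{E}_2(1,-1,0,\dots,0)$, so that $e_1^2=-e_2^2=e_1+e_2$ and $e_i^2=0$ for $i\geq 3$; note $\mathcal{E}^2=\spa\{e_1+e_2\}$ and $\ann(\mathcal{E})=\spa\{e_1+e_2,e_3,\dots,e_n\}$, which is also $\asoc_1(\mathcal{E})=\snil(\mathcal{E})$ by Theorem~\ref{th:nilrad_abelian}. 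First I would check that $M=\spa\{e_1-e_2,e_3,\dots,e_n\}$ is a subalgebra: $(e_1-e_2)^2=e_1^2+e_2^2=0$ and all other basis products vanish, so $M^2=0$; in particular $M$ is abelian of codimension one, hence a maximal subalgebra. Since $\dim M = n-1$ and $e_1+e_2\notin M$ (the $e_1,e_2$ coordinates of any element of $M$ are negatives of each other, so they cannot both be positive), $M$ is one subalgebra with the stated property.

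Next I would prove uniqueness. Let $N$ be any maximal subalgebra of $\mathcal{E}$ with $e_1+e_2\notin N$. Because $\mathcal{E}$ is supersolvable (it lies in $\mathcal{T}_\mathbb{K}$), the remark following the definition of supersolvability gives $\codim N = 1$, so $\dim N = n-1$. Since $e_1+e_2\notin N$ and $\spa\{e_1+e_2\}=\mathcal{E}^2$ is one-dimensional, we have $\mathcal{E}=N\oplus\mathcal{E}^2$, and in particular $N^2\subseteq\mathcal{E}^2$ while $N^2\subseteq N$ forces $N^2=0$; that is, $N$ is an abelian subalgebra (indeed an abelian ideal, since $\mathcal{E}N\subseteq\mathcal{E}^2\cdot(\text{stuff})\subseteq N^2+\dots$, but abelianness is all we need). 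Now pick any $x=\sum_{i=1}^n\mu_i e_i\in N$. Then $x^2=(\mu_1^2-\mu_2^2)(e_1+e_2)$, and since $N$ is abelian this must be $0$, forcing $\mu_1^2=\mu_2^2$, i.e. $\mu_2=\pm\mu_1$ for every element of $N$.

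The final step is to rule out the possibility $\mu_2=\mu_1$ occurring for some $x\in N$. Suppose $x=\sum\mu_i e_i\in N$ with $\mu_1=\mu_2\neq 0$; then $x-\mu_1(e_1+e_2)$ would have to be handled — but a cleaner route is: the set $\{x\in\mathcal{E}:\mu_1=\mu_2\}$ is exactly $\spa\{e_1+e_2,e_3,\dots,e_n\}=\ann(\mathcal{E})$, which contains $e_1+e_2$. If $N$ contained an element with $\mu_1=\mu_2\neq0$, then together with any element with $\mu_2=-\mu_1$ we could produce $e_1+e_2\in N$ (by taking a suitable linear combination to cancel the $e_1-e_2$ part), contradicting $e_1+e_2\notin N$; and if every element of $N$ with $\mu_1=\mu_2$ in fact has $\mu_1=\mu_2=0$, then $N\subseteq\spa\{e_1-e_2,e_3,\dots,e_n\}=M$, whence $N=M$ by dimension count. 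The main obstacle — really the only delicate point — is this last bookkeeping: showing that an abelian $(n-1)$-dimensional subspace all of whose elements satisfy $\mu_2=\pm\mu_1$ and which misses $e_1+e_2$ must be contained in $M$. This is a short linear-algebra argument once one observes that $N$ cannot simultaneously contain a vector with $\mu_1=\mu_2\neq 0$ and the full complement $\spa\{e_3,\dots,e_n\}$; I would organise it by projecting $N$ onto the $\spa\{e_1,e_2\}$-coordinate and noting the image is either $0$, $\spa\{e_1-e_2\}$, $\spa\{e_1+e_2\}$, or all of $\spa\{e_1,e_2\}$, and the constraints above leave only $\spa\{e_1-e_2\}$, giving $N\subseteq M$ and hence $N=M$.
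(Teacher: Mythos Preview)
Your proposal is correct in substance, and the projection argument at the end is a clean way to finish; one harmless slip is that $\ann(\mathcal{E})=\spa\{e_3,\dots,e_n\}$, not $\spa\{e_1+e_2,e_3,\dots,e_n\}$ (the latter is $\nil(\mathcal{E})=\snil(\mathcal{E})$), but this mislabelling is never used in the actual argument. The middle paragraph with the aborted linear-combination idea should simply be deleted in favour of the projection approach, which is self-contained once you note that $\dim\ker(\pi|_N)=n-2$ forces $\spa\{e_3,\dots,e_n\}\subseteq N$ whenever $\dim\pi(N)=1$.

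Your route differs from the paper's. The paper puts a basis of the maximal subalgebra in reduced row echelon form and argues directly on the pivot structure: if the second column has a pivot, one of the basis vectors squares to $e_1^2=e_1+e_2$, contradicting $e_1+e_2\notin M$; otherwise the subalgebra is $\spa\{e_1+\alpha e_2,e_3,\dots,e_n\}$ and the closure condition forces $\alpha=\pm1$. Your approach is more structural: you first observe that $N^2\subseteq N\cap\mathcal{E}^2=0$, so $N$ is abelian, which immediately constrains each element to satisfy $\mu_1^2=\mu_2^2$, and then a linear projection onto $\spa\{e_1,e_2\}$ finishes. The echelon-form argument is shorter and more mechanical; your abelianness observation is a nice conceptual shortcut that explains \emph{why} the constraint $\mu_2=\pm\mu_1$ appears, and would generalise more readily to other $\mathcal{E}_k(\lambda_1,\dots,\lambda_n)$.
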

\begin{proof}
First, note that $\mathcal{E}_2(1,-1,0,\dots,0)$ is clearly supersolvable, then all maximal subalgebras have codimension one. Thus, consider a maximal subalgebra $M=\spa\{u_i\colon 1\leq i\leq n-1\}$ with $u_i=\sum_{j=1}^{n}\mu_{ij}e_j$, $\mu_{ij}\in\mathbb{K}$ such that $e_1+e_2\notin M$. Assume that the matrix $(\mu_{ij})_{i,j=1}^{n-1,n}$ is in reduced row echelon form. Now, we claim that $\mu_{22}=0$. Otherwise, $\mu_{22}=1$ and there would exist an element $e_k$ of the natural basis with $k>2$ such that $M=\spa\{e_1+\alpha_1e_k,\dots,e_{k-1}+\alpha_{k-1}e_k,e_{k+1},\dots,e_n\}$ with $\alpha_1,\dots,\alpha_{k-1}\in\mathbb{K}$. However, in this case, $(e_1+\alpha_1e_k )^2=e_1^2\in M$, which contradicts the fact that $e_1+e_2\notin M$. Then, $M=\spa\{e_1+\alpha e_2,e_3,\dots,e_n\}$ with $\alpha\in\mathbb{K}$. Moreover, it is easy to check that $M$ is a subalgebra if and only if $\alpha=\pm1$ but, as $e_1+e_2\notin M$, $\alpha=-1$ necessarily.
\end{proof}
\begin{lemma}\label{lem:special_case}
Let $\mathcal{E} = \mathcal{E}_2(1,-1) \oplus \mathcal{E}_2(1,-1)$ be the evolution algebra with natural basis $\{e_1,e_2,e_3,e_4\}$ and multiplication given by $e_1^2 = -e_2^2 = e_1 + e_2$, $e_3^2 = -e_4^2 = e_3 + e_4$. Then, every maximal subalgebra of $\mathcal{E}$ that contains the subalgebra $\spa\{e_1 + e_2 + e_3 + e_4\}$ also contains $\mathcal{E}^2$.
\end{lemma}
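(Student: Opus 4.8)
The idea is to make the membership condition $e_1+e_2+e_3+e_4 \in M$ completely explicit in coordinates, use the fact that $\mathcal{E}$ is supersolvable (so every maximal subalgebra has codimension one), and then show that closure under multiplication forces $\mathcal{E}^2 = \spa\{e_1+e_2, e_3+e_4\}$ into $M$. Write $M = \spa\{u_1, u_2, u_3\}$ with $u_i = \sum_{j=1}^4 \mu_{ij} e_j$, and assume the coefficient matrix $(\mu_{ij})$ is in reduced row echelon form. Let $w = e_1+e_2+e_3+e_4$; the hypothesis $w \in M$ translates into a linear dependence of $w$ on the rows.

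\textbf{Key steps.} First I would note that $w^2 = e_1^2 + e_2^2 + e_3^2 + e_4^2 = (e_1+e_2) + (-(e_1+e_2)) + (e_3+e_4) + (-(e_3+e_4)) = 0$, so $\spa\{w\}$ is indeed a subalgebra and the statement is not vacuous. Next, since $\mathcal{E}^2 = \spa\{e_1+e_2\} \oplus \spa\{e_3+e_4\}$ is two-dimensional and $M$ has codimension one, the subspace $M \cap \mathcal{E}^2$ has dimension at least one; the whole point is to upgrade this to dimension two. Suppose for contradiction that $e_1+e_2 \notin M$ (the case $e_3+e_4 \notin M$ being symmetric). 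Then $M + \spa\{e_1+e_2\} = \mathcal{E}$, and I can pick a natural-basis-adapted description of $M$: concretely, using the reduced row echelon form together with $e_1+e_2 \notin M$, one shows $M = \spa\{e_1 + \alpha e_2 + (\text{terms in } e_3,e_4),\ \dots\}$ where the pivot pattern is forced. The cleanest route is to observe that $M$ contains an element $v$ with nonzero $e_1$- or $e_2$-component (otherwise $M \subseteq \spa\{e_3,e_4\}$, too small), and then $v \cdot w \in M$ because $M$ is a subalgebra while $v\cdot w \in \spa\{e_1+e_2\}$ by the multiplication table (the $e_3,e_4$ parts of $v$ contribute a multiple of $e_3+e_4$, so one must track both); combining with $w \in M$ one extracts that both $e_1+e_2$ and $e_3+e_4$ must lie in $M$.

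\textbf{Main obstacle.} The delicate point is that a single product $v \cdot w$ lands in $\mathcal{E}^2 = \spa\{e_1+e_2, e_3+e_4\}$ but not necessarily in one of the two lines; I need to produce \emph{two} independent products, or combine $w \in M$ with one product, to pin down a two-dimensional subspace of $M \cap \mathcal{E}^2$. The efficient way is: since $M$ has codimension one and $w \in M$, write $M \ni w$ and pick any $v \in M \setminus \spa\{w, \mathcal{E}^2\}$ with a nonzero component along $e_1$ or $e_3$ (such $v$ exists as $\dim M = 3 > \dim(\spa\{w\} + \mathcal{E}^2 \cap M)$ unless $\mathcal{E}^2 \subseteq M$ already, which is what we want). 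Writing $v = a e_1 + b e_2 + c e_3 + d e_4$, a direct computation gives $v w = v^2 - (\text{cross terms vanish}) $... more carefully, $vw = \tfrac{1}{2}\big((v+w)^2 - v^2 - w^2\big)$; since $w^2 = 0$ this is $\tfrac12((v+w)^2 - v^2)$, and expanding via the diagonal multiplication $e_i^2$ one sees $v^2 = (a^2 - b^2)(e_1+e_2) + (c^2 - d^2)(e_3+e_4)$ and $(v+w)^2 = ((a+1)^2-(b+1)^2)(e_1+e_2) + ((c+1)^2-(d+1)^2)(e_3+e_4) = (2a-2b)(e_1+e_2) + (2c-2d)(e_3+e_4) + v^2$, hence $vw = (a-b)(e_1+e_2) + (c-d)(e_3+e_4) \in M$. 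Now if $(a-b)(c-d) \neq 0$ for some such $v$ we still only get one vector; but varying $v$ over a $2$-dimensional family inside $M$ (which exists once we know $\dim M = 3$ and $M \not\subseteq \spa\{e_1+e_2,e_3+e_4, \text{one more line}\}$) yields two independent vectors of the form $(a-b)(e_1+e_2)+(c-d)(e_3+e_4)$, forcing $\mathcal{E}^2 \subseteq M$. I would finish by carefully checking that the dimension count genuinely produces two such independent products unless $\mathcal{E}^2 \subseteq M$ from the start, which is the crux and should be handled by splitting on whether the projection of $M$ to $\mathcal{E}/\mathcal{E}^2$ meets the images of $e_1, e_2, e_3, e_4$ in a subspace of dimension $\geq 2$.
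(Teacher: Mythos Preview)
Your approach is sound and in fact more conceptual than the paper's, but your endgame is left vague where it can be made immediate. Two observations you almost make would close it in one line.

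First, note that $w = (e_1+e_2)+(e_3+e_4)$ already lies in $\mathcal{E}^2$; your phrase ``$v \in M \setminus \spa\{w,\mathcal{E}^2\}$'' therefore collapses, and $M\cap\mathcal{E}^2$ automatically contains $w$. Second, your formula $\phi(v)\coloneqq vw = (a-b)(e_1+e_2)+(c-d)(e_3+e_4)$ exhibits $\phi$ as a linear map $\mathcal{E}\to\mathcal{E}^2$ with $\ker\phi = \{a=b,\ c=d\} = \mathcal{E}^2$ exactly. Since $M$ is a subalgebra containing $w$, one has $\phi(M)\subseteq M\cap\mathcal{E}^2$. If $\mathcal{E}^2\nsubseteq M$ then $M\cap\mathcal{E}^2=\spa\{w\}$ is one-dimensional, so $\ker(\phi|_M)=M\cap\mathcal{E}^2$ has dimension $1$ and $\dim\phi(M)=3-1=2$, contradicting $\phi(M)\subseteq\spa\{w\}$. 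This replaces your proposed case-split on the projection to $\mathcal{E}/\mathcal{E}^2$ and finishes the proof with no further work; the polarisation detour is also unnecessary, since in a natural basis $vw=\sum_i a_ib_ie_i^2$ directly.

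By contrast, the paper's proof is purely computational: it puts a basis of $M$ in the normal form $u_1=w$, $u_2=e_2+\alpha e_3+\beta e_4$, $u_3=e_3+\gamma e_4$ and computes the single product $u_1u_3=(1-\gamma)(e_3+e_4)\in M$, from which $e_3+e_4\in M$ (either $\gamma=1$ and $u_3=e_3+e_4$, or $\gamma\neq1$ and one divides) and then $e_1+e_2=w-(e_3+e_4)\in M$. Your argument avoids the choice of normal form and the implicit case analysis on pivot positions; the paper's buys concreteness at the cost of that bookkeeping.
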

\begin{proof}
First, note that $\mathcal{E}$ is supersolvable, and so all maximal subalgebras have codimension one. Additionally, observe that $\spa\{e_1+e_2,e_3,e_4\}$ and $\spa\{e_1,e_2,e_3+e_4\}$ are maximal subalgebras that contain $\spa\{e_1 + e_2 + e_3 + e_4\}$, and both contain $\mathcal{E}^2 = \spa\{e_1 + e_2,e_3 + e_4\}$.
Now let $U = \spa\{u_1,u_2,u_3\}$ be a maximal subalgebra containing $\spa\{e_1 + e_2 + e_3 + e_4\}$ and assume $U$ is different from the two subalgebras above. Without loss of generality, we can write:
\[
u_1 = e_1 + e_2 + e_3 + e_4, \quad
u_2 = e_2 + \alpha e_3 + \beta e_4, \quad
u_3 = e_3 + \gamma e_4,
\]
for some scalars $\alpha, \beta, \gamma \in \mathbb{K}$.
Since $U$ is closed under multiplication, we must have that
\[
u_1 u_3 = e_3^2 + \gamma e_4^2 = (1 - \gamma)(e_3 + e_4) \in U,
\]
which holds if and only if $\gamma = 1$. Consequently, as $e_1 + e_2 = u_1 - u_3$,  $U$ contains both $e_1 + e_2$ and $e_3 + e_4$, so $\mathcal{E}^2 \subseteq U$, as claimed.
\end{proof}
\begin{proposition}\label{prop:dually_tk}
	Let $\mathcal{E}$ be an evolution algebra such that $\supp\big(\snil(\mathcal{E})\big)=\supp(\mathcal{E})$. Then, $\mathcal{E}$ is dually atomistic if and only if $\mathcal{E}\cong\mathcal{E}_2(1,-1)$.
\end{proposition}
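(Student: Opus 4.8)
The plan is to combine the structural description of Theorem~\ref{th:equiv_frat} with Lemmas~\ref{lem:max_sub} and~\ref{lem:special_case}, together with the two observations recalled at the start of this section: a dually atomistic evolution algebra is $\phi$-free, and every quotient of a dually atomistic evolution algebra is again dually atomistic. The overall strategy for the forward implication is to use Theorem~\ref{th:equiv_frat} to reduce $\mathcal{E}$ to $K\oplus\ann(\mathcal{E})$ with $K\cong\bigoplus_{i=1}^{m}\mathcal{E}_2(1,-1)$, and then kill both the case $m\geq 2$ and the case $\ann(\mathcal{E})\neq 0$ by passing to a small quotient that is known not to be dually atomistic.

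For the ``if'' direction I would just check directly that $\mathcal{E}_2(1,-1)$ is dually atomistic. A one-dimensional subspace $\spa\{\alpha e_1+\beta e_2\}$ is a subalgebra precisely when $\alpha^2=\beta^2$ (this is where $\operatorname{char}\mathbb{K}\neq 2$ enters), so the only proper subalgebras are $0$, $\spa\{e_1+e_2\}$ and $\spa\{e_1-e_2\}$; the latter two are maximal, hence trivially intersections of maximal subalgebras, and $0=\spa\{e_1+e_2\}\cap\spa\{e_1-e_2\}$.

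For the ``only if'' direction, assume $\mathcal{E}$ is dually atomistic and $\supp\big(\snil(\mathcal{E})\big)=\supp(\mathcal{E})$; then $\mathcal{E}$ is $\phi$-free, so Theorem~\ref{th:equiv_frat}(iii) gives $\mathcal{E}=K\oplus\ann(\mathcal{E})$ with $K\cong\bigoplus_{i=1}^{m}\mathcal{E}_2(1,-1)$, where $m\geq 1$ since $\mathcal{E}$ is non-abelian. If $m\geq 2$, I would quotient $\mathcal{E}$ by the ideal consisting of the summands $3,\dots,m$ of $K$ together with $\ann(\mathcal{E})$ (a subspace of the annihilator and a sum of direct summands, hence an ideal); the quotient is isomorphic to $\mathcal{E}_2(1,-1)\oplus\mathcal{E}_2(1,-1)$, and by Lemma~\ref{lem:special_case} every maximal subalgebra containing the proper subalgebra $\spa\{e_1+e_2+e_3+e_4\}$ (which squares to $0$) contains the two-dimensional $\mathcal{E}^2$, so that line cannot be an intersection of maximal subalgebras --- contradicting that the quotient must be dually atomistic. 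Hence $m=1$. If $\ann(\mathcal{E})\neq 0$, I would quotient by a codimension-one subspace of $\ann(\mathcal{E})$ to get an algebra isomorphic to $\mathcal{E}_2(1,-1,0)$; by Lemma~\ref{lem:max_sub} the unique maximal subalgebra avoiding $e_1+e_2$ is $\spa\{e_1-e_2,e_3\}$, which does not contain $e_1+e_2+e_3$, so every maximal subalgebra containing the proper subalgebra $\spa\{e_1+e_2+e_3\}$ must contain both $e_1+e_2$ and $e_3$, hence equals the two-dimensional $\spa\{e_1+e_2,e_3\}$; again $\spa\{e_1+e_2+e_3\}$ is not an intersection of maximal subalgebras, a contradiction. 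Therefore $\ann(\mathcal{E})=0$ and $\mathcal{E}\cong\mathcal{E}_2(1,-1)$.

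The routine points needing care are that the subspaces quotiented out are genuine ideals and that the singled-out lines are genuine proper subalgebras (each squares to zero); the real content --- a complete list of the maximal subalgebras containing a prescribed line in $\mathcal{E}_2(1,-1,0,\dots,0)$ and in $\mathcal{E}_2(1,-1)^{\oplus 2}$ --- is already packaged in Lemmas~\ref{lem:max_sub} and~\ref{lem:special_case}. I expect the only genuinely delicate step to be invoking Theorem~\ref{th:equiv_frat} correctly to obtain the decomposition $K\oplus\ann(\mathcal{E})$ from which the two reductions proceed.
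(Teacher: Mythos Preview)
Your proof is correct and follows essentially the same route as the paper: both use Theorem~\ref{th:equiv_frat} to reduce to the decomposition $K\oplus\ann(\mathcal{E})$ with $K\cong\bigoplus_{i=1}^m\mathcal{E}_2(1,-1)$, then eliminate $m\geq 2$ via Lemma~\ref{lem:special_case} and the case $m=1$ with nontrivial annihilator via Lemma~\ref{lem:max_sub}. The only organizational difference is that the paper phrases the case split as $\mathcal{E}\in\mathcal{T}_\mathbb{K}$ versus $\mathcal{E}\notin\mathcal{T}_\mathbb{K}$ and argues directly inside $\mathcal{E}_2(1,-1,0,\dots,0)$ rather than first passing to the three-dimensional quotient, but the content is identical.
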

\begin{proof}
	If $\mathcal{E}\cong\mathcal{E}_2(1,-1)$, its only nonzero subalgebras are $\spa\{e_1+e_2\}$ and $\spa\{e_1-e_2\}$. Then, $\mathcal{E}$ is clearly dually atomistic.
	
	Conversely, assume that $\mathcal{E}\ncong\mathcal{E}_2(1,-1)$ and distinguish the following two cases.
	
	\textbf{Case (a):} If $\mathcal{E}\in\mathcal{T}_\mathbb{K}$, then, by Proposition~\ref{prop:snil_ab}, $\mathcal{E}$ could only be dually atomistic if the codimension of the annihilator is two. In this case, by Lemma~\ref{lem:max_sub}, the element $e_1+e_2$ is contained in all maximal subalgebras except for $M=\spa\{e_1-e_2,e_3,\dots,e_n\}$. Then, consider the subalgebra $\spa\{e_1+e_2+e_3\}$.
	In fact, $e_1+e_2+e_3\notin\spa\{e_1+e_2\}$ but $\spa\{e_1+e_2+e_3\}\nsubseteq M$, so it cannot be expressed as the intersection of maximal subalgebras, and consequently $\mathcal{E}$ is not dually atomistic.
	
	\textbf{Case (b):} If $\mathcal{E}\notin\mathcal{T}_\mathbb{K}$, then, by Theorem~\ref{th:equiv_frat}, $\mathcal{E}$ could only be dually atomistic if it can be written as $K\oplus\ann(\mathcal{E})$ where $K\cong\bigoplus_{i=1}^m\mathcal{E}_2(1,-1)$ with $m\geq2$. However, in this case, $\mathcal{E}_2(1,-1) \oplus \mathcal{E}_2(1,-1)$ is a quotient algebra of $\mathcal{E}$, which is not dually atomistic by Lemma~\ref{lem:special_case}.
\end{proof}
\begin{proof}[Proof of Theorem \ref{th:aa_dually}]
	It follows from the combination of Propositions~\ref{prop:char_aa}, \ref{prop:dually} and \ref{prop:dually_tk}.
\end{proof}

\section*{Acknowledgements}
This work was partially supported by the Agencia Estatal de Investigaci\'on (Spain),
grant PID2020-115155GB-I00 (European FEDER support included, UE), and
by the Xunta de Galicia through the Competitive Reference Groups (GRC), ED431C
2023/31.
The third author was also supported by the FPU21/05685 scholarship from the Ministerio de Educaci\'on y Formaci\'on Profesional (Spain).


\begin{thebibliography}{10}
\expandafter\ifx\csname url\endcsname\relax
  \def\url#1{\texttt{#1}}\fi
\expandafter\ifx\csname urlprefix\endcsname\relax\def\urlprefix{URL }\fi

\bibitem{B_67_cohom}
D.~W. Barnes, On the cohomology of soluble {L}ie algebras, Math. Z. 101 (1967)
  343--349.
\newline\urlprefix\url{https://doi.org/10.1007/BF01109799}

\bibitem{BG_68_Lie}
D.~W. Barnes, H.~M. Gastineau-Hills, On the theory of soluble {L}ie algebras,
  Math. Z. 106 (1968) 343--354.
\newline\urlprefix\url{https://doi.org/10.1007/BF01115083}

\bibitem{frat_leib}
C.~Batten, L.~Bosko-Dunbar, A.~Hedges, J.~T. Hird, K.~Stagg, E.~Stitzinger, A
  {F}rattini theory for {L}eibniz algebras, Comm. Algebra 41~(4) (2013)
  1547--1557.
\newline\urlprefix\url{https://doi.org/10.1080/00927872.2011.643844}

\bibitem{bosko2011jacobson}
L.~Bosko, A.~Hedges, J.~T. Hird, N.~Schwartz, K.~Stagg, Jacobson's refinement
  of {E}ngel's theorem for {L}eibniz algebras, Involve 4~(3) (2011) 293--296.
\newline\urlprefix\url{https://doi.org/10.2140/involve.2011.4.293}

\bibitem{BCS_22_natural}
N.~Boudi, Y.~Cabrera~Casado, M.~Siles~Molina, Natural families in evolution
  algebras, Publ. Mat. 66~(1) (2022) 159--181.
\newline\urlprefix\url{https://doi.org/10.5565/publmat6612206}

\bibitem{tesisyolanda}
Y.~Cabrera~Casado, Evolution algebras, Ph.D. thesis, Universidad de Málaga, 2016.

\bibitem{CKS_19_basic}
Y.~Cabrera~Casado, M.~Kanuni, M.~Siles~Molina, Basic ideals in evolution
  algebras, Linear Algebra Appl. 570 (2019) 148--180.
\newline\urlprefix\url{https://doi.org/10.1016/j.laa.2019.01.010}

\bibitem{CMMT_25}
Y.~Cabrera~Casado, D.~Mart\'in~Barquero, C.~Mart\'in~Gonz\'alez, A.~Tocino, On
  simple evolution algebras of dimension two and three. {C}onstructing simple
  and semisimple evolution algebras, Linear Multilinear Algebra 73~(3) (2025)
  507--524.
\newline\urlprefix\url{https://doi.org/10.1080/03081087.2024.2352452}

\bibitem{CSV_16_semiprime}
Y.~Cabrera~Casado, M.~Siles~Molina, M.~V. Velasco, Evolution algebras of
  arbitrary dimension and their decompositions, Linear Algebra Appl. 495 (2016)
  122--162.
\newline\urlprefix\url{https://doi.org/10.1016/j.laa.2016.01.007}

\bibitem{CGOT_13}
L.~M. Camacho, J.~R. G\'omez, B.~A. Omirov, R.~M. Turdibaev, Some properties of
  evolution algebras, Bull. Korean Math. Soc. 50~(5) (2013) 1481--1494.
\newline\urlprefix\url{https://doi.org/10.4134/BKMS.2013.50.5.1481}

\bibitem{CLOR_14}
J.~M. Casas, M.~Ladra, B.~A. Omirov, U.~A. Rozikov, On evolution algebras, Algebra Colloq. 21 (2) (2014) 331--342.
  vol.~21, 2014.
\newline\urlprefix\url{https://doi.org/10.1142/S1005386714000285}

\bibitem{EL_15}
A.~Elduque, A.~Labra, Evolution algebras and graphs, J. Algebra Appl. 14~(7)
  (2015) 1550103, 10.
\newline\urlprefix\url{https://doi.org/10.1142/S0219498815501030}

\bibitem{EL_16}
A.~Elduque, A.~Labra, On nilpotent evolution algebras, Linear Algebra Appl. 505
  (2016) 11--31.
\newline\urlprefix\url{https://doi.org/10.1016/j.laa.2016.04.025}

\bibitem{F_1885_origin}
G.~Frattini, Intorno alla generazione dei gruppi di operazioni, Rom. Acc. L.
  Rend. (4) I (1885) 281--285, 455--457.

\bibitem{H_59_grouptheory}
M.~Hall, Jr., The theory of groups, Chelsea Publishing Co., New York, 1976,
  reprinting of the 1968 edition.

\bibitem{LT_85_rest}
M.~Lincoln, D.~Towers, Frattini theory for restricted {L}ie algebras, Arch.
  Math. (Basel) 45~(5) (1985) 451--457.
\newline\urlprefix\url{https://doi.org/10.1007/BF01195370}

\bibitem{M_67_frat}
E.~I. Marshall, The {F}rattini subalgebra of a {L}ie algebra, J. London Math.
  Soc. 42 (1967) 416--422.
\newline\urlprefix\url{https://doi.org/10.1112/jlms/s1-42.1.416}

\bibitem{paez2023subalgebra}
P.~P\'aez-Guill\'an, S.~Siciliano, D.~A. Towers, On the subalgebra lattice of a
  restricted {L}ie algebra, Linear Algebra Appl. 660 (2023) 47--65.
\newline\urlprefix\url{https://doi.org/10.1016/j.laa.2022.12.004}

\bibitem{scheiderer}
C.~Scheiderer, Intersections of maximal subalgebras in {L}ie algebras, J.
  Algebra 105~(1) (1987) 268--270.
\newline\urlprefix\url{https://doi.org/10.1016/0021-8693(87)90192-X}

\bibitem{Tian_08}
J.~P. Tian, Evolution algebras and their applications, vol. 1921 of Lecture
  Notes in Mathematics, Springer, Berlin, 2008.
\newline\urlprefix\url{https://doi.org/10.1007/978-3-540-74284-5}

\bibitem{TV_06}
J.~P. Tian, P.~Vojt\v{e}chovsk\'{y}, Mathematical concepts of evolution algebras
  in non-{M}endelian genetics, Quasigroups Related Systems 14~(1) (2006)
  111--122.

\bibitem{T_71_frat}
D.~A. Towers, On the generators of a nilpotent non-associative algebra, Quart.
  J. Math. Oxford Ser. (2) 22 (1971) 545--550.
\newline\urlprefix\url{https://doi.org/10.1093/qmath/22.4.545}

\bibitem{frat_towers}
D.~A. Towers, A {F}rattini theory for algebras, Proc. London Math. Soc. (3) 27
  (1973) 440--462.
\newline\urlprefix\url{https://doi.org/10.1112/plms/s3-27.3.440}

\bibitem{ZSSS_82}
K.~A. Zhevlakov, A.~M. Slinko, I.~P. Shestakov, A.~I. Shirshov, Rings that are
  nearly associative, vol. 104 of Pure and Applied Mathematics, Academic Press,
  Inc. [Harcourt Brace Jovanovich, Publishers], New York-London, 1982,
  translated from the Russian by Harry F. Smith.

\end{thebibliography}

\end{document}